\documentclass[a4paper,10pt,reqno]{amsart}
\usepackage[utf8x]{inputenc}
\usepackage{amsmath,amsthm,amssymb}
\usepackage{amsaddr}
\usepackage{tikz}
\usetikzlibrary{patterns}

\def\MR#1{}

\makeatletter
\let\@fnsymbol\@arabic
\makeatother 

\hyphenation{two-di-men-sio-nal}
\hyphenation{tra-ver-sa-ble}
\hyphenation{a-na-ly-se}

\newtheorem{thm}{Theorem}[section]
\newtheorem{lem}[thm]{Lemma}
\newtheorem{prop}[thm]{Proposition}
\newtheorem{cor}[thm]{Corollary}

\theoremstyle{definition}
\newtheorem{definition}[thm]{Definition}
\newtheorem{remark}[thm]{Remark}
\newtheorem{example}[thm]{Example}

\newcommand{\FF}{\mathbb{F}}

\newcommand{\cG}{\mathcal{G}}
\newcommand{\cY}{\mathcal{Y}}

\newcommand{\EE}{\mathbb{E}}

\newcommand{\Mj}{\ensuremath{M_j}}

\newcommand{\Mjmin}{\ensuremath{M_j^-}}
\newcommand{\Mjst}{\ensuremath{M_j^*}}

\newcommand{\pj}{\ensuremath{p_j}}
\newcommand{\pjmom}{\ensuremath{p_{j-1}^-}}
\newcommand{\pjone}{\ensuremath{p_j^{(1)}}}

\newcommand{\pjmin}{\ensuremath{p_j^{-}}}

\newcommand{\pMj}{\ensuremath{p_{M_j}}}
\newcommand{\pMjmin}{\ensuremath{p_{M_{j-1}}}}
\newcommand{\pstMj}{\ensuremath{\bar{p}_{M_j}}}

\newcommand{\const}{\ensuremath{c}}
\newcommand{\pconn}{\ensuremath{p_{\text{conn}}}}
\newcommand{\pisol}{\ensuremath{p_{\text{isol}}}}

\newcommand{\varMjst}{\ensuremath{X_*}}
\newcommand{\varMjmin}{\ensuremath{X_-}}

\newcommand{\formalconnected}{$\FF_2$-cohomologically $j$-connected}
\newcommand{\connected}{$j$-cohom-connected}
\newcommand{\formalconnectedness}{$\FF_2$-cohomological $j$-connectedness}
\newcommand{\connectedness}{$j$-cohom-connectedness}

\newcommand{\mwconnectedness}{$\FF_2$-cohomological $(k-1)$-connectedness}
\newcommand{\mwconnected}{$\FF_2$-cohomologically $(k-1)$-connected}

\renewcommand{\Pr}{\mathbb{P}}

\renewcommand{\theenumi}{(\roman{enumi})}

\DeclareMathOperator{\im}{im}

\title{Vanishing of cohomology groups of random simplicial complexes}
\author{Oliver Cooley, Nicola Del Giudice, Mihyun Kang, Philipp Spr{\" u}ssel}
\address{Institute of Discrete Mathematics, Graz University of Technology, Steyrergasse 30, 8010 Graz, Austria}
\email{\{cooley,delgiudice,kang,spruessel\}@math.tugraz.at}
\thanks{Supported by Austrian Science Fund (FWF): P27290 and W1230 II}

\keywords{Random hypergraphs, random simplicial complexes, sharp threshold, hitting time, connectedness}

\begin{document}

\begin{abstract}
  We consider $k$-dimensional random simplicial complexes that are
  generated from the binomial random $(k+1)$-uniform hypergraph by
  taking the downward-closure, where $k\geq 2$. For each $1\leq j \leq
  k-1$, we determine when all cohomology groups with coefficients in
  $\FF_2$ from dimension one up to $j$ vanish and the zero-th
  cohomology group is isomorphic to $\FF_2$. This property is not
  deterministically monotone for this model of random complexes, 
  but nevertheless we show that it has a
  single sharp threshold. Moreover we prove a hitting time result,
  relating the vanishing of these cohomology groups to the
  disappearance of the last minimal obstruction. We also study the
  asymptotic distribution of the dimension of the $j$-th cohomology
  group inside the critical window. As a corollary, we deduce a
  hitting time result for a different model of random simplicial
  complexes introduced in [Linial and Meshulam, Combinatorica, 2006],
  a result which was previously only known for dimension two [Kahle
  and Pittel, Random Structures Algorithms, 2016].
\end{abstract}

\maketitle

\section{Introduction}

\subsection{Motivation}

In their seminal paper~\cite{ErdosRenyi59}, Erd\H{o}s and R\'enyi 
introduced the uniform random graph and, among other results,
addressed the problem of determining the probability of this graph
being connected. This classical result is usually stated for
the binomial random graph $G(n,p)$ on $n$ vertices,
in which each edge is present with a given
probability $p$ independently: the property of $G(n,p)$ being connected
undergoes a \emph{phase transition} around 
the sharp threshold $p=\frac{\log n }{n}$~\cite{Stepanov70}.
Throughout the paper, we denote the natural logarithm by $\log$ and we say 
that an event holds \emph{with high probability} (\emph{whp}
for short) if it holds with probability tending to $1$
as $n$ tends to infinity.
\begin{thm}[\cite{ErdosRenyi59,Stepanov70}]\label{thm:ER}
  Let $\omega$ be any function of $n$ which tends to infinity as
  $n \to \infty$. Then \emph{with high probability} 
  the following holds.
  \begin{enumerate}
  \item If $p=\frac{\log n - \omega}{n}$, then $G(n,p)$ is not
    connected.
  \item If $p=\frac{\log n + \omega}{n}$, then $G(n,p)$ is connected.
  \end{enumerate}
\end{thm}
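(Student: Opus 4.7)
The plan is to prove the two directions separately, using the standard approach of isolated vertices for the lower bound (disconnectedness) and a component-counting argument for the upper bound (connectedness).

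For part~(i), with $p=\frac{\log n - \omega}{n}$, I would show that whp $G(n,p)$ contains an isolated vertex, which clearly forces disconnectedness. Let $X$ be the number of isolated vertices; then $\EE[X]=n(1-p)^{n-1}$. Using $\log(1-p)=-p-O(p^2)$ and the choice of $p$, one checks that $\EE[X]\to\infty$ (indeed $\EE[X]\sim e^{\omega}$ up to a $1+o(1)$ factor). To conclude $X>0$ whp I would apply the second moment method via Chebyshev's inequality: compute $\EE[X(X-1)]=n(n-1)(1-p)^{2n-3}$ and show that $\var(X)/\EE[X]^2=o(1)$, so that $\Pr(X=0)\leq \var(X)/\EE[X]^2\to 0$.

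For part~(ii), with $p=\frac{\log n + \omega}{n}$, I would use a first moment argument on small connected components. If $G(n,p)$ is disconnected, then there exists a vertex subset $S$ with $1\leq |S|\leq n/2$ which spans a connected subgraph and has no edges to its complement. Let $Y_k$ be the number of such sets of size exactly $k$. Bounding crudely, $\EE[Y_k]\leq \binom{n}{k} k^{k-2}\, p^{k-1}\, (1-p)^{k(n-k)}$, where the factor $k^{k-2}$ counts spanning trees on $k$ vertices (by Cayley's formula). The plan is to show $\sum_{k=1}^{\lfloor n/2\rfloor}\EE[Y_k]=o(1)$, so that by Markov's inequality no such $S$ exists whp and the graph is connected.

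The main technical obstacle lies in the sum $\sum_{k=1}^{\lfloor n/2\rfloor}\EE[Y_k]=o(1)$: the bound must be split into ranges. For the $k=1$ term (isolated vertices) one uses the choice of $p$ to obtain $n(1-p)^{n-1}\sim e^{-\omega}=o(1)$. For $2\leq k\leq k_0$ with some slowly growing $k_0$, the dominant factor is still $(1-p)^{k(n-k)}\leq e^{-kpn/2}$, so each such term is small and their sum is $o(1)$. The delicate range is $k_0\leq k\leq n/2$, where one must carefully balance $\binom{n}{k}\leq (en/k)^k$ against $(1-p)^{k(n-k)}$ and absorb the $k^{k-2}p^{k-1}$ factor; here the inequality $(1-p)^{n-k}\leq e^{-p(n-k)}\leq e^{-pn/2}$ together with $pn/2=\tfrac{1}{2}(\log n+\omega)$ yields the needed decay. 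Once this sum is controlled, the two directions together establish the claimed sharp threshold.
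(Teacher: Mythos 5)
Your proposal is correct and is the standard proof of this classical result; the paper itself gives no proof of Theorem~\ref{thm:ER} but simply cites \cite{ErdosRenyi59,Stepanov70}, and your argument (second moment on isolated vertices for the subcritical side, first moment over spanning-tree-connected sets of size at most $n/2$ for the supercritical side) is exactly the textbook route found in those references and in \cite{FriezeKaronski16}. The only point worth making explicit is the harmless reduction, via monotonicity of connectedness in $p$, to the case $\omega=o(\log n)$ (so that in particular $p\geq 0$ in part~(i) and the asymptotics $\EE[X]\sim e^{\omega}$ and $n(1-p)^{n-1}\sim e^{-\omega}$ are valid).
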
 
\noindent
As an even stronger result, Erd\H{o}s and R\'enyi~\cite{ErdosRenyi59}
determined the limiting probability of $G(n,p)$ being connected
around the point
of the phase transition. More precisely, this result can be stated for
$G(n,p)$ as follows.

\begin{thm}[see
  e.g.~{\cite[Theorem~4.1]{FriezeKaronski16}}] \label{thm:ERcritical}
  Let $\const \in \mathbb{R}$ be a constant and suppose that
  $(\const_n)_{n\geq 1}$ is a sequence of real numbers that converges to
  $\const$ as $n \rightarrow \infty$ . If 
  \begin{equation*}
    p = \frac{\log n +  \const_n}{n},
  \end{equation*}
  then
  \begin{equation*}
    \Pr \left( G(n,p) \mbox{ is connected} \right)
    \xrightarrow{n \rightarrow \infty } e^{-e^{-\const}}.
  \end{equation*}
\end{thm}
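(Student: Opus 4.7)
The plan is to prove this via Poisson approximation for the number of isolated vertices of $G(n,p)$. Letting $X = X(n,p)$ denote the number of isolated vertices, the argument splits into two parts: first, determine the limiting distribution of $X$; second, show that whp the only obstruction to connectedness is the presence of an isolated vertex. The target identity $\exp(-e^{-\const})$ is precisely $\Pr(\mathrm{Poisson}(e^{-\const}) = 0)$, which is what guides the choice of approach.

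For the first part, I will compute $\EE[X] = n(1-p)^{n-1}$, which for $p = (\log n + \const_n)/n$ satisfies
\[
\EE[X] \sim n \exp(-np) = \exp(-\const_n) \to e^{-\const}.
\]
More generally, for each fixed integer $r \ge 1$, the $r$-th factorial moment of $X$ equals $\binom{n}{r} r!\,(1-p)^{r(n-r) + \binom{r}{2}}$, and a short calculation shows that it converges to $e^{-r\const}$. By the method of moments, $X$ converges in distribution to a Poisson random variable with parameter $e^{-\const}$; in particular, $\Pr(X = 0) \to \exp(-e^{-\const})$.

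For the second part, the goal is to establish
\[
\Pr\!\left( G(n,p) \text{ is disconnected and } X = 0 \right) = o(1).
\]
If $G(n,p)$ is disconnected but has no isolated vertex, then it contains a connected component on exactly $s$ vertices for some $2 \le s \le n/2$. A union bound using Cayley's formula (each such component contains a spanning tree) gives
\[
\sum_{s=2}^{\lfloor n/2 \rfloor} \binom{n}{s} s^{s-2} p^{s-1} (1-p)^{s(n-s)}
\]
as an upper bound for the expected number of such components. I expect the main obstacle to be showing that this sum tends to $0$: the natural approach splits it into small $s$ (say $s \le \log n$), where $(1-p)^{s(n-s)}$ behaves like $n^{-s}$ up to subpolynomial factors and dominates $\binom{n}{s} s^{s-2} p^{s-1}$; and larger $s$, where one exploits $s(n-s) = \Theta(n^2)$ so that $(1-p)^{s(n-s)}$ decays super-polynomially and beats the combinatorial factor $\binom{n}{s} \le 2^n$.

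Combining the two parts yields
\[
\Pr(G(n,p) \text{ is connected}) = \Pr(X = 0) + o(1) \to \exp(-e^{-\const}),
\]
as required.
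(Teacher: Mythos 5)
The paper does not prove this statement: it cites \cite[Theorem~4.1]{FriezeKaronski16} and merely remarks that the proof there is carried out in the binomial model. Your argument is exactly the standard proof of that cited result: Poisson approximation for the number $X$ of isolated vertices via factorial moments, plus a first-moment bound showing that whp $G(n,p)$ has no component on $s$ vertices for any $2\le s\le n/2$. The first half is correct as written; the factorial moment is indeed $\binom{n}{r}r!\,(1-p)^{r(n-r)+\binom{r}{2}}\to e^{-r\const}$, and the method of moments gives $X\xrightarrow{d}\mathrm{Po}(e^{-\const})$.

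One step of the second half would fail as literally stated. You justify the large-$s$ range by claiming $s(n-s)=\Theta(n^2)$, so that $(1-p)^{s(n-s)}$ beats $\binom{n}{s}\le 2^n$. That only covers $s=\Theta(n)$: for intermediate $s$, say $\log n\le s\le n/\log n$, one has $s(n-s)=\Theta(sn)=o(n^2)$, hence $(1-p)^{s(n-s)}=e^{-\Theta(s\log n)}=e^{o(n)}$, which does not beat $2^n$. The repair is standard and uses the factors you already have in the sum: bound $\binom{n}{s}\le(en/s)^s$ and $(1-p)^{s(n-s)}\le e^{-pns/2}$ (valid for all $s\le n/2$), so that the $s$-th term is at most $p^{-1}\bigl(enp\,e^{-pn/2}\bigr)^s\le p^{-1}n^{-s/3}$ for large $n$, since $enp\,e^{-pn/2}=O\bigl(n^{-1/2}\log n\bigr)$. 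Summing over $s\ge4$ gives $O(n^{-1/3})=o(1)$, and $s=2,3$ are already handled by your small-$s$ estimate. With that correction the argument is complete and agrees with the proof the paper is implicitly invoking.
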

We note that while~\cite[Theorem~4.1]{FriezeKaronski16} 
is stated for the uniform random graph, it is actually proved 
via the binomial model $G(n,p)$ and thus immediately translates into
Theorem~\ref{thm:ERcritical}.

Subsequently, Bollob\'as and Thomason~\cite{BollobasThomason85} proved
a \emph{hitting time} result for the random graph process, in which
edges are added one at a time uniformly at random. This result relates
the connectedness of the random graph process to the disappearance of
the last \emph{smallest obstruction},  an isolated vertex.

\begin{thm}[\cite{BollobasThomason85}]\label{thm:BT}
  With high probability, the random graph process becomes connected at
  exactly the moment when the last isolated vertex disappears.
\end{thm}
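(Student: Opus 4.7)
The plan is to show that the hitting time $\tau_c$ for connectedness coincides whp with the hitting time $\tau_i$ at which the last isolated vertex disappears. The inequality $\tau_i\le\tau_c$ is deterministic because a graph containing an isolated vertex cannot be connected, so everything hinges on the reverse direction: one must rule out any other obstruction to connectedness in a short critical window of edge-counts. I would proceed in three steps: (a)~localise the two hitting times to a short window; (b)~show via a first-moment argument that inside that window no component of intermediate size can appear; and (c)~conclude that the only possible disconnecting obstruction is an isolated vertex, giving $\tau_c=\tau_i$.

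For step (a), Theorem~\ref{thm:ER}, together with an essentially identical argument for the minimum-degree threshold (replacing ``connected'' by ``no isolated vertex'', which has the same sharp threshold), localises both $\tau_c$ and $\tau_i$ whp inside an interval of edge-counts $[m^-,m^+]$, where $m^{\pm}=\tfrac{n}{2}(\log n\pm\omega)$ for some slowly growing function $\omega=\omega(n)\to\infty$, say $\omega=\log\log n$. For step (b), I would pass to the binomial model $G(n,p)$ with $p=2m/\binom{n}{2}$ via the usual conditioning correspondence with $G(n,m)$. Any component of size $k\ge 2$ contains a spanning tree on $k$ vertices and has no edges to the remaining $n-k$ vertices, giving the crude bound
\begin{equation*}
\mathbb{E}\!\left[\text{number of components of size }k\text{ in }G(n,p)\right]\le\binom{n}{k}\,k^{k-2}\,p^{k-1}\,(1-p)^{k(n-k)}.
\end{equation*}
For $p$ in the relevant range, splitting the sum $\sum_{k=2}^{\lfloor n/2\rfloor}$ at some intermediate value such as $k=\log n$ should show that the total expectation is small enough, say $o(n^{-1}\omega^{-2})$, to permit a union bound over all $m\in[m^-,m^+]$ and still produce $o(1)$. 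Non-tree components of size $k$ carry an additional factor of $p$ and are therefore even more strongly suppressed.

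Combining (a) and (b) in step (c), I conclude that whp throughout the critical window every disconnected configuration contains an isolated vertex, so the process becomes connected at exactly the step at which the last isolated vertex disappears, yielding $\tau_c=\tau_i$ whp. The main obstacle is the uniform-in-$m$ version of the first-moment estimate: the bound must be small enough that a union bound over the $\Theta(n\omega)$ intermediate values of $m$ still gives $o(1)$. This requires care for the intermediate regime of $k$, where neither the small-$k$ nor the large-$k$ estimate is tight, together with a clean translation between $G(n,m)$ and $G(n,p)$, typically by sandwiching the process between two binomial random graphs at the endpoints of the window.
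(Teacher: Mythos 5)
First, note that the paper does not prove this statement: Theorem~\ref{thm:BT} is the classical Bollob\'as--Thomason hitting time result, quoted from \cite{BollobasThomason85} as background, so there is no internal proof to compare against. Your outline is the standard textbook strategy (localise both hitting times to a window of edge-counts, rule out components of intermediate size there, conclude that the only obstruction is an isolated vertex), and steps (a) and (c) are fine.

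The genuine gap is in step (b), and it is not where you locate it. You ask for a per-$m$ bound of $o(n^{-1}\omega^{-2})$ on the expected number of intermediate components so that a union bound over the $\Theta(n\omega)$ values of $m$ in the window closes. This is unattainable for $k=2$: at the bottom of the window, with $p^-=(\log n-\omega)/n$, the expected number of components of size two is
\begin{equation*}
  \binom{n}{2}\,p^-\,(1-p^-)^{2(n-2)} = \Theta\!\left(\frac{e^{2\omega}\log n}{n}\right),
\end{equation*}
and summing this over the window contributes $\Theta(e^{2\omega}\log n)\to\infty$ no matter how slowly $\omega\to\infty$. (For $k\ge 3$ your union bound does work; the problem is at the smallest $k$, not in an ``intermediate regime''.) The union bound over $m$ is inherently lossy because the same two-vertex component is counted at every step at which it survives. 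Two standard repairs: either union-bound over \emph{potential} components rather than over $m$ --- the probability that a fixed pair $\{u,v\}$ forms a component at \emph{some} time in the window is at most $p^+(1-p^-)^{2(n-2)}$, and $\binom{n}{2}$ times this is $O(e^{2\omega}\log n/n)=o(1)$ --- or use the usual sprinkling argument: show that whp $G_{m^-}$ consists of one giant component plus a set $S$ of $O(e^{\omega})$ isolated vertices, and that whp no edge with both endpoints in $S$ is born during $[m^-,m^+]$, so that no new small component can ever be created inside the window. With either repair the argument is complete; as written, the key estimate you rely on is false for $k=2$.
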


Since then, many higher-dimensional analogues of both random graphs
and connectedness have been analysed and in particular two different
approaches have received considerable attention. A first natural
generalisation for dimension $k\geq 1$ is the random $(k+1)$-uniform
hypergraph $G_p=G(k;n,p)$ in which each $(k+1)$-tuple of vertices
forms a hyperedge with probability $p$ independently. There are
several natural ways of defining connectedness of $G_p$, which have
been extensively studied, including vertex-connectedness
\cite{BehrischCojaOghlanKang10b,BCOK14,BollobasRiordan12c,BollobasRiordan17,KaronskiLuczak96,Poole15,SchmidtShamir85}
and high-order connectedness (also known as $j$-tuple-connectedness)
\cite{CooleyKangKoch16,CKKgiant,CooleyKangPerson18,KahlePittel16}.
Another topic which has received particular attention is
generalisations of the $\ell$-core of a random graph (i.e.\ the
maximum subgraph with minimum degree at least $\ell$)
\cite{BotelhoWormaldZiviani12,Cooper04,DemboMontanari08,Molloy05},
which itself may be viewed as a generalisation of the giant component
of a random graph
\cite{Bollobas84,ErdosRenyi60,JansonKnuthLuczakPittel93,Luczak90,LuczakPittelWierman}.

A more recent approach concerns random simplicial complexes, of which
a first model for the 2-dimensional case was introduced by Linial and
Meshulam~\cite{LinialMeshulam06}. They considered the concept of
\emph{$\FF_2$-homological $1$-connectivity} of the random 2-complex as
the vanishing of its first homology group with coefficients in the
two-element field $\FF_2$, which is equivalent to the vanishing of the
first cohomology group. More precisely, the model $\cY_p=\cY(k;n,p)$
considered by Linial and Meshulam~\cite{LinialMeshulam06} for $k=2$
and subsequently by Meshulam and Wallach~\cite{MeshulamWallach08} for
general $k\ge2$ is defined as follows. Starting from the full
\emph{$(k-1)$-dimensional skeleton} on $[n]:=\{1,\ldots,n\}$, that is,
all simplices from dimension zero up to $k-1$, each $(k+1)$-set forms
a $k$-simplex with probability $p$ independently. They showed that the
property of the vanishing of the $(k-1)$-th cohomology 
group $H^{k-1}(\cY_p;\FF_2)$
with coefficients in $\FF_2$ has a sharp threshold at $p =
\frac{k \log n}{n}$.

\begin{thm}[\cite{LinialMeshulam06,MeshulamWallach08}]\label{thm:meshwall}
  Let $\omega$ be any function of $n$ which tends to infinity as $n
  \rightarrow \infty$. Then with high probability,
  \begin{enumerate}
  \item if $p=\frac{k\log n - \omega}{n}$, then
    $H^{k-1}(\cY_p;\FF_2) \neq 0$;
  \item if $p=\frac{k\log n + \omega}{n}$, then
    $H^{k-1}(\cY_p;\FF_2) = 0$.
  \end{enumerate}
\end{thm}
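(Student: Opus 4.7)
The plan is to tie both directions of the theorem to the existence of \emph{isolated $(k-1)$-faces}, where $\sigma \in \binom{[n]}{k}$ is isolated in $\cY_p$ if no $k$-simplex of $\cY_p$ contains $\sigma$.

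For part (i), I would use the second moment method on the count $X$ of isolated $(k-1)$-faces. A direct calculation gives $\EE[X] = \binom{n}{k}(1-p)^{n-k}$, which for $p = (k\log n - \omega)/n$ tends to infinity as $(1+o(1))e^{\omega}/k!$. Two distinct $(k-1)$-simplices share a potential $k$-superset only if they intersect in $k-1$ vertices, in which case they share exactly one, so a standard covariance computation yields $\var(X) = (1+o(1))\EE[X]$ and Chebyshev's inequality gives $X \geq 1$ whp. For an isolated $\sigma$, the indicator cochain $\mathbf{1}_{\sigma} \in C^{k-1}(\cY_p;\FF_2)$ is a cocycle. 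To see that it is not a coboundary, I would argue by duality over $\FF_2$: $\mathbf{1}_{\sigma} \in \im \delta^{k-2}$ iff the $\sigma$-coefficient of every $(k-1)$-cycle is $0$; however for any $v \in [n] \setminus \sigma$ the cycle $\partial(\sigma \cup \{v\})$ has $\sigma$-coefficient equal to $1$, so $[\mathbf{1}_{\sigma}] \neq 0$ in $H^{k-1}(\cY_p;\FF_2)$.

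For part (ii), I would run a first-moment argument over \emph{reduced cocycles}. Call $\phi \in C^{k-1}$ reduced if it has minimum Hamming weight in its class modulo $\im \delta^{k-2}$, equivalently if every $(k-2)$-face $\tau$ satisfies $|\{\sigma \supset \tau : \phi(\sigma) = 1\}| \leq \lfloor(n-k+1)/2\rfloor$ (otherwise $\phi + \delta \mathbf{1}_{\tau}$, which flips $\phi$ on all $(k-1)$-supersets of $\tau$, would be strictly lighter). If $H^{k-1}(\cY_p;\FF_2) \neq 0$ then a non-zero reduced cocycle must exist, so it suffices to bound the probability that any such $\phi$ is a cocycle in $\cY_p$. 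For $\phi$ with support $S$ of size $s$, this probability equals $(1-p)^{N(\phi)}$, where $N(\phi)$ counts $(k+1)$-subsets of $[n]$ whose $(k-1)$-faces have odd intersection with $S$. Stratifying by $s$, the union bound takes the form
\begin{equation*}
  \Pr\bigl[H^{k-1}(\cY_p;\FF_2) \neq 0\bigr] \leq \sum_{s\geq 1} \binom{\binom{n}{k}}{s}(1-p)^{N(\phi)},
\end{equation*}
and for $p = (k\log n + \omega)/n$ a suitable combinatorial lower bound on $N(\phi)$ (the core technical input of Meshulam and Wallach) will make this sum $o(1)$.

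The main obstacle will be the lower bound on $N(\phi)$ for reduced non-zero $\phi$. The naive estimate $N(\phi) \geq s(n-k) - 2\,|\{\text{pairs in }S\text{ sharing a common }(k+1)\text{-superset}\}|$ is too lossy for large $s$, since the pair count alone can be of order $sn$. I would split the analysis by $s$: for small $s$, where pairs are negligible, the bound $N(\phi) \approx s(n-k)$ already beats the entropy factor $\binom{\binom{n}{k}}{s} \leq (en^k/(ks))^s$; for large $s$, one has to exploit the half-link reducedness condition via an isoperimetric/cocycle-expansion inequality. Balancing the two regimes so as to extract the sharp constant $k$ in front of $\log n$ is the crux of the Meshulam--Wallach argument.
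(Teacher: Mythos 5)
This theorem is not proved in the paper; it is cited directly from \cite{LinialMeshulam06,MeshulamWallach08}, so there is no in-paper proof to compare against. Your sketch is nevertheless a faithful reconstruction of the Linial--Meshulam--Wallach strategy: part (i) is correct and essentially complete (isolated $(k-1)$-faces, second moment, and the duality pairing $\langle\delta g,z\rangle=\langle g,\partial z\rangle$ with $z=\partial(\sigma\cup\{v\})$), and part (ii) correctly identifies minimal-weight coset representatives, the quantity $N(\phi)$, and the cocycle-expansion input as the crux. That input is precisely MW Proposition 3.1, which the present paper quotes as Proposition~\ref{prop:meshwal}: $b(f)\ge w(f)n/(j+2)$, i.e.\ $N(\phi)\ge sn/(k+1)$ for reduced $\phi$ of weight $s$ at $j=k-1$.

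The one genuine gap in your plan for part (ii) is the intermediate range of $s$. You propose to beat the entropy factor $\binom{\binom{n}{k}}{s}$ using $N(\phi)\approx s(n-k)$ for small $s$ and the expansion bound $N(\phi)\ge sn/(k+1)$ for large $s$. But with $p=(k\log n+\omega)/n$, the expansion bound paired with $\binom{\binom{n}{k}}{s}\le(en^k/(ks))^s$ gives a term of order $\bigl(c\,n^{k^2/(k+1)}e^{-\omega/(k+1)}/s\bigr)^s$, which only tends to zero once $s\gtrsim n^{k^2/(k+1)}$; meanwhile, the naive estimate $N(\phi)\approx s(n-k)$ is lost for moderate $s$ because the ``pair correction'' you flag can indeed be of order $skn$ even under the half-link condition. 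So neither regime covers, say, $s$ between a large constant and $n^{k^2/(k+1)}$. The missing ingredient is not a sharper lower bound on $N(\phi)$ alone but rather a much tighter enumeration of the candidate supports: one must count only supports that can actually arise (using connectivity/traversability plus the reducedness constraints) rather than all $\binom{\binom{n}{k}}{s}$ subsets. This is exactly what the present paper does in the analogous supercritical argument for $\cG_p$ (Lemma~\ref{lem:largesupport}, via a breadth-first search over traversable supports combined with Proposition~\ref{prop:meshwal}); Meshulam and Wallach do something analogous. Your plan as stated frames everything through a lower bound on $N(\phi)$, which is insufficient on its own.
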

Meshulam and Wallach~\cite{MeshulamWallach08} further proved that the same
statement remains true if the coefficients of the cohomology group are
taken from any finite abelian group.

Later, Kahle and Pittel~\cite{KahlePittel16} derived a hitting time
result for $\cY_p$ (analogous to Theorem~\ref{thm:BT}) in the case
$k=2$. Moreover, they determined the limiting distribution of
$\dim\left(H^{k-1}(\cY_p;\FF_2)\right)$ for general $k\geq 2$ and 
for $p$ inside the critical window.

\begin{thm}[{\cite[Theorem~1.10]{KahlePittel16}}]\label{thm:KPcritical}
  Let $k\geq 2$ and $\const \in \mathbb{R}$ be a constant. If 
  \begin{equation*}
    p = \frac{k \log n +  \const}{n},
  \end{equation*}
  then $\dim\left(H^{k-1}(\cY_p;\FF_2)\right)$ converges in
  distribution to a Poisson random variable with expectation
  $e^{-\const}/k!$. In particular, we have
  \begin{equation*}
    \Pr \left( H^{k-1}(\cY_p;\FF_2) = 0 \right)
    \xrightarrow{n \rightarrow \infty} e^{-e^{-\const}/k!}.
  \end{equation*}
\end{thm}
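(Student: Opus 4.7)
The plan is to reduce the theorem to an explicit Poisson count. Let $X$ denote the number of \emph{isolated} $(k-1)$-faces of $\cY_p$, i.e., $(k-1)$-simplices contained in no $k$-simplex of $\cY_p$. I would establish (i) that $X$ converges in distribution to $\mathrm{Po}(e^{-\const}/k!)$, and (ii) that with high probability $\dim\bigl(H^{k-1}(\cY_p;\FF_2)\bigr) = X$; together these yield the theorem. The lower bound $\dim H^{k-1} \geq X$ is essentially free: the indicator $\chi_\sigma \in C^{k-1}(\cY_p;\FF_2)$ of any isolated $(k-1)$-face $\sigma$ is a cocycle, because $\delta\chi_\sigma(\tau) = [\sigma \subset \tau] = 0$ for every $k$-simplex $\tau \in \cY_p$, and these cocycles are linearly independent modulo coboundaries since every nonzero coboundary on the complete $(k-1)$-skeleton of $[n]$ has support of size $\Omega(n^{k-1})$, while by (i) the value of $X$ is bounded in probability.

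For (i), I would use the method of factorial moments. The first moment
$$\EE[X] = \binom{n}{k}(1-p)^{n-k} \longrightarrow \frac{e^{-\const}}{k!}$$
is a direct substitution. For $r \geq 2$, the $r$-th factorial moment $\EE\bigl[(X)_r\bigr]$ sums over ordered $r$-tuples of distinct $(k-1)$-faces $(\sigma_1,\dots,\sigma_r)$ the probability that no $k$-simplex of $\cY_p$ contains any of them. The dominant contribution comes from $r$-tuples whose $k$-neighbourhoods $\{\tau : \sigma_i \subset \tau\}$ are essentially disjoint, giving
$$\EE\bigl[(X)_r\bigr] \sim \binom{n}{k}^r (1-p)^{r(n-k)} \longrightarrow \left(\frac{e^{-\const}}{k!}\right)^r,$$
with the overlap contributions bounded by routine inclusion--exclusion estimates as $o(1)$. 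This yields $X \Rightarrow \mathrm{Po}(e^{-\const}/k!)$.

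The main obstacle is (ii), the upper bound $\dim H^{k-1} \leq X$ whp. I would use a \emph{minimal cocycle} argument in the spirit of Meshulam--Wallach. Within each nonzero class of $H^{k-1}(\cY_p;\FF_2)$, choose a representative $\alpha \in C^{k-1}$ whose support $S(\alpha)$ has minimum cardinality; the goal is to show that whp every such $\alpha$ is the indicator of a single isolated $(k-1)$-face. If some $\sigma \in S(\alpha)$ were contained in a $k$-simplex $\tau \in \cY_p$, the cocycle condition $\delta\alpha(\tau) = 0$ would force another face of $\tau$ to lie in $S(\alpha)$; propagating this through the link of each $(k-2)$-face and invoking the coboundary expansion of the complete $(k-1)$-skeleton (which allows one to reduce $|S(\alpha)|$ by adding a suitable coboundary unless $S(\alpha)$ is already highly structured) forces either a contradiction with minimality or a polynomial lower bound on $|S(\alpha)|$. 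A first-moment bound over candidate supports of each size then eliminates the remaining case. The hard part is executing this inside the critical window: when $p = (k\log n + \const)/n$ the error terms in the Meshulam--Wallach estimate are only $O(1)$ rather than $o(1)$, so one must run a careful size-by-size union bound over $|S(\alpha)|$, and handle an intermediate regime of polynomial-sized supports by a refined expansion analysis.
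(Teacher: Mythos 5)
First, note that the paper does not prove this statement: Theorem~\ref{thm:KPcritical} is quoted verbatim from Kahle and Pittel and is used only as background, so there is no in-paper proof to compare against. The closest analogue is the paper's proof of Theorem~\ref{thm:critwindow1} for $\cG_p$, which follows exactly the two-step template you propose: Poisson convergence of the count of minimal obstructions via the method of (factorial/binomial) moments, plus a ``whp the dimension equals the obstruction count'' step proved by analysing minimal-support representatives of cohomology classes. Your steps (i) and the lower bound in (ii) are sound, with one small correction: a nonzero coboundary in $C^{k-1}$ of the complete skeleton has minimum support $n-k+1=\Theta(n)$ (take $\delta^{k-2}\chi_\rho$ for a single $(k-2)$-face $\rho$), not $\Omega(n^{k-1})$; the linear bound still suffices since $X=O_{\Pr}(1)$, and it follows, e.g., from Lemma~\ref{lem:minobst}\ref{minobst:size} applied in the complete complex.

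The genuine gap is in the upper bound of (ii), which you correctly identify as the crux but then describe as a ``careful size-by-size union bound.'' Inside the critical window this does not close with the tools you name. Using only Proposition~\ref{prop:meshwal} (Meshulam--Wallach's coisoperimetric inequality), a minimal representative with support size $s\ge2$ forbids at least $sn/(k+1)$ many $k$-simplices, giving a survival probability of roughly $n^{-sk/(k+1)}$ per support, against roughly $n^{ks}/s!$ candidate supports; the product diverges for every $s$. Both Meshulam--Wallach and Kahle--Pittel need a substantially finer count of supports as a function of the coboundary weight $b(f)$ (and Kahle--Pittel must additionally isolate the $s=1$ contribution, which is $\Theta(1)$ rather than $o(1)$ in the window); the present paper's substitute for this in the $\cG_p$ setting is the traversability notion and the breadth-first search of Lemma~\ref{lem:largesupport}, which exploits that every support element must be reachable through $k$-simplices and thereby replaces the $n^{ks}$ count by one that decays. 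Without some such mechanism for counting supports more efficiently than ``all $s$-subsets of $(k-1)$-faces,'' your step (ii) does not go through, so as written the proposal is an outline with the decisive estimate missing.
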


Observe that Theorem~\ref{thm:KPcritical} can be generalised to hold
for $p = (k\log n+\const_n)/n$, where $(\const_n)_{n\geq 1}$ is a
sequence of real numbers that converges to $\const$ as $n \rightarrow 
\infty$ (cf.\ Theorem~\ref{thm:ERcritical}), because
$\dim\left(H^{k-1}(\cY_p;\FF_2)\right)$ is a monotone function in $p$.

In this paper, we aim to bridge the gap between random hypergraphs and
random simplicial complexes, considering random simplicial
$k$-complexes that arise as the downward-closure of random
$(k+1)$-uniform hypergraphs (Definition~\ref{def:complexGp}). 
Unlike $\cY_p$, in this model the presence of 
the full $(k-1)$-dimensional skeleton is not guaranteed,
thus the vanishing of the cohomology groups of dimensions lower than
$k-1$ does not hold trivially. Therefore, for each $1\leq j\leq k-1$,
we introduce \emph{\formalconnectedness} of a $k$-dimensional
simplicial complex (Definition~\ref{def:cohomconn}) as the vanishing
of \emph{all} cohomology groups with coefficients in $\FF_2$ from
dimension one up to $j$ and the zero-th cohomology group being
isomorphic to $\FF_2$.

Although this notion of connectedness is not deterministically
monotone for our model, we prove that \formalconnectedness\ has a sharp
threshold. Furthermore, we derive a hitting time result and determine
the limiting probability for \formalconnectedness\ inside the critical
window. As a corollary, we deduce a hitting time result for $\cY_p$ in
\emph{general dimension}, thus extending the hitting time result of
Kahle and Pittel~\cite{KahlePittel16}.

\subsection{Model}

Throughout the paper let $k\geq 2$ be a fixed integer. For positive
integers $\ell$ and $1\le i\le\ell$, write $[\ell]:=\{1,\dotsc,\ell\}$
and denote by $\binom{[\ell]}{i}$ the family of $i$-element subsets of
$[\ell]$.
\begin{definition}
  A family $\cG$ of non-empty finite subsets of a vertex set $V$ is
  called a \emph{simplicial complex} if it is downward-closed, i.e.\
  if every non-empty set $A$ that is contained in a set $B\in \cG$
  also lies in $\cG$, and if furthermore the singleton $\{v\}$ is in
  $\cG$ for every $v\in V$.
  
  The elements of a simplicial complex $\cG$ of cardinality $k+1$ are
  called \emph{$k$-simplices} of $\cG$. If $\cG$ has no
  $(k+1)$-simplices, then we call it \emph{$k$-dimensional}, or
  \emph{$k$-complex}. If $\cG$ is a $k$-complex, then for each
  $j=0,\ldots,k-1$ the \emph{$j$-skeleton} of $\cG$ is the $j$-complex
  formed by all $i$-simplices in $\cG$ with $0\leq i\leq j$.
\end{definition}

We define a model of random $k$-complexes starting from the binomial
random $(k+1)$-uniform hypergraph $G_p$ on vertex set $[n]$: the
$0$-simplices are the vertices of $G_p$, the $k$-simplices are the
hyperedges of $G_p$, but there is more than one way to guarantee the
downward-closure property to obtain a simplicial complex. In the model
$\cY_p$ considered by Meshulam and Wallach
in~\cite{MeshulamWallach08}, the full $(k-1)$-skeleton on $[n]$ is
always included. In contrast, we only include those simplices
that are \emph{necessary} to ensure the downward-closure property. 

\begin{definition} \label{def:complexGp}
  We denote by $\cG_p=\cG(k;n,p)$ the random $k$-dimensional
  simplicial complex on vertex set $[n]$ such that:
  \begin{itemize}
  \item the $0$-simplices are the singletons of $[n]$;
  \item the $k$-simplices are the hyperedges of the binomial random
    $(k+1)$-uniform hypergraph $G_p$;
  \item for each $j \in [k-1]$, the $j$-simplices are exactly the
    $(j+1)$-subsets of hyperedges of $G_p$.
  \end{itemize}
\end{definition}
\noindent
In other words, $\cG_p$ is the random $k$-complex on $[n]$ obtained
from $G_p$ by taking the downward-closure of each hyperedge. For
instance, denote by $F_p$ the set of hyperedges of the binomial random
4-uniform hypergraph $G_p = G(3;n,p)$. Then the corresponding two
models of random 3-dimensional simplicial complexes are given by
\begin{align*}
  \cY_p &= \cY(3;n,p) =  \binom{[n]}{1}\, \cup\, \binom{[n]}{2}\,
  \cup\, \binom{[n]}{3}\, \cup\, F_p\qquad \hspace{2ex} \text{and}\\
  \cG_p &= \cG(3;n,p) = \binom{[n]}{1}\, \cup\,
  \partial(\partial F_p)\, \cup\, \partial F_p\, \cup\, F_p,
\end{align*}
where $\partial E$ for a set $E$ of $j$-simplices, $j\ge1$, denotes
the set of all $(j-1)$-simplices that are contained in elements of
$E$.

Given a simplicial complex $\cG$, let $H^{i}(\cG;\FF_2)$ be its $i$-th
cohomology group with coefficients in $\FF_2$
(see~\eqref{eq:cohomgroup} in
Section~\ref{sec:preliminaries:cohomology} for the definition).
We define a notion of connectedness for a simplicial complex via
the \emph{vanishing} of its cohomology groups. Since the $0$-th cohomology 
group $H^{0}(\cG;\FF_2)$ cannot vanish, we require 
this group to be ``as small as possible''.
\begin{definition} \label{def:cohomconn}
  Given a positive integer $j$, a simplicial complex $\cG$ is called
  \emph{\formalconnected} (\emph{\connected} for short) if
  \begin{itemize}
  \item $H^0(\cG;\FF_2) = \FF_2$;
  \item $H^{i}(\cG;\FF_2)= 0$ for all $i \in [j]$.
  \end{itemize}
\end{definition}
Observe that $H^0(\cG;\FF_2)$ being isomorphic to $\FF_2$ is equivalent
to connectedness of $\cG$ in the topological sense, which we call
\emph{topological connectedness} in order to distinguish it from other
notions of connectedness. For $\cG = \cG_p$, this is also equivalent to
\emph{vertex-connectedness} of the associated $(k+1)$-uniform
hypergraph.

Moreover, one might define an analogous version of connectedness via the
vanishing of \emph{homology} groups, which would be equivalent to our 
definition of \formalconnectedness\ by the Universal Coefficient Theorem (see
e.g.~\cite{Munkres84}).

A significant difference between $\cG_p$ and $\cY_p$ is that for
$\cY_p$ the only requirement for \mwconnectedness\ is the vanishing of
the $(k-1)$-th cohomology group, since the presence of the full
$(k-1)$-skeleton guarantees topological connectedness and the
vanishing of the $j$-th cohomology groups for all $j \in [k-2]$. 

Moreover, it is important to observe that \formalconnectedness\ is
\emph{not} necessarily a monotone increasing property of $\cG_p$:\
adding a $k$-simplex to a \connected\ complex might yield a complex
without this property (see Example~\ref{ex:nonmono}). Thus, the
existence of a single threshold for \connectedness\ is \emph{not}
guaranteed, but one of our main results shows that such a threshold
indeed exists (Theorem~\ref{thm:gentheor}).

\subsection{Main results} \label{sec:intro:mainres}

The main contributions of this paper are fourfold. Firstly, we prove
(Theorem~\ref{thm:gentheor}) that for each $j \in [k-1]$, the
probability
\begin{equation} \label{eq:pj}
  \pj:= \frac{(j+1)\log n + \log\log n}{(k-j+1) n^{k-j}}(k-j)!
\end{equation}
is a sharp threshold for \formalconnectedness. Secondly, we prove a
hitting time result (also Theorem~\ref{thm:gentheor}), relating the
\connectedness\ threshold to the disappearance of all copies of the
\emph{minimal obstruction} $\Mj$ (Definition~\ref{def:mj}). Thirdly,
our results directly imply an analogous hitting time result for
$\cY_p$ (Corollary~\ref{cor:hittingtimeYp}), which Kahle and
Pittel~\cite{KahlePittel16} proved for $k=2$. Lastly, we analyse the
critical window given by the threshold $\pj$, showing that inside the
window the dimension of the $j$-th cohomology group converges in
distribution to a Poisson random variable
(Theorem~\ref{thm:critwindow1}).

Proving that $\pj$ is indeed a (sharp) threshold turns out to be
considerably more challenging than might be expected, largely because
\formalconnectedness\ of $\cG_p$
is \emph{not} a monotone increasing property. In
particular, the subcritical case is much more involved than it would
be for a monotone property, where often a simple second moment
argument suffices. In order to circumvent the difficulties arising
from the non-monotonicity, we introduce auxiliary structures called
\emph{local obstacles} (Definition~\ref{def:localobstacle}), showing
that whp $\cG_p$ evolves in a monotone way regarding those
(Lemma~\ref{lem:localobstacle}). In the supercritical case we must
guarantee that whp there are no more obstructions to \connectedness.
In order to bound the number of potential ``large'' obstructions,
basic calculations are not sufficient and therefore we define a
suitable search process, which gives us more precise bounds on their
number (Lemma~\ref{lem:largesupport}).  
 
Before defining the minimal obstruction $\Mj$
(Definition~\ref{def:mj}), we introduce the following necessary
concepts.

\begin{definition}\label{def:flower}
  Given a $k$-simplex $K$ in a $k$-dimensional simplicial complex
  $\cG$, a collection $\mathcal{F} = \{P_0, \ldots, P_{k-j}\}$ of
  $j$-simplices forms a \emph{$j$-flower in $K$} (see
  Figure~\ref{fig:flower}) if $K=\bigcup_{i=0}^{k-j} P_i$ and
  $C:=\bigcap_{i=0}^{k-j} P_i$ satisfies $|C|=j$. We call the
  $j$-simplices $P_i$ the \emph{petals} and the set $C$ the
  \emph{centre} of the $j$-flower $\mathcal{F}$.

  \begin{figure}[htbp] 
    \centering
    \begin{tikzpicture}[scale=0.72]
      \draw[thick] [opacity=0.4, fill=gray] (-7.6,1.7) -- (-6.5,1.05);
      \draw[thick] [opacity=0.4, fill=gray] (-7.6,1.7) -- (-7.1,3.4);
      \draw[thick] [opacity=0.4, fill=gray] (-7.6,1.7) -- (-9.4,1.8);
      \draw[thick] [opacity=0.4, fill=gray] (-7.6,1.7) -- (-5.3,2.4);
                
      \draw[fill] (-7.6,1.7) circle [radius=0.1] node [above left]    
        {$c_1$};
      \draw[fill] (-6.5,1.05)  circle [radius=0.04] node [below right]
        {$w_3$};
      \draw[fill] (-7.1,3.4) circle [radius=0.04] node [above]
        {$w_2$}; 
      \draw[fill] (-5.3,2.4) circle [radius=0.04]  node [above]
        {$w_0$}; 
      \draw[fill] (-9.4,1.8) circle [radius=0.04] node [above]
        {$w_1$};

      \draw[thick,opacity=0.4] (-9.8,1.9) to [out=80,in=180] (-7.4,4);
      \draw[thick,opacity=0.4] (-7.4,4) to [out=0,in=100] (-4.65,1.9);
      \draw[thick,opacity=0.4] (-4.65,1.9) to [out=280,in=10]
        (-5.65,0.6);
      \draw[thick,opacity=0.4] (-5.65,0.6) to [out=200,in=0]
        (-7.6,0.7);
      \draw[thick,opacity=0.4] (-7.6,0.7) to [out=180,in=290]
        (-9.8,0.8);
      \draw[thick,opacity=0.4] (-9.8,0.8) to [out=110,in=260]
        (-9.8,1.9);
   
      \node at (-5.4,3.65) {$K$};

      \node at (-7.25,-0.3) {(i)};

      \filldraw [opacity=0.8, fill=gray] (-1.6,1.7) -- (-0.5,1.05) --
        (-3.4,1.8) -- cycle; 
      \filldraw [opacity=0.8, fill=gray] (-1.6,1.7) -- (-0.5,1.05) --
        (0.7,2.4) -- cycle; 
      \filldraw [opacity=0.8, fill=gray] (-1.6,1.7) -- (-0.5,1.05) --
        (-1.1,3.4) -- cycle; 

      \draw[very thick] (-1.6,1.7) -- (-0.5,1.05);  
                
      \draw[fill] (-1.6,1.7) circle [radius=0.04] node [above left]
        {$c_1$};
      \draw[fill] (-0.5,1.05)  circle [radius=0.04] node [below right]
        {$c_2$};
      \draw[fill] (-1.1,3.4) circle [radius=0.04] node [above]
        {$w_2$}; 
      \draw[fill] (0.7,2.4) circle [radius=0.04]  node [above]
        {$w_0$}; 
      \draw[fill] (-3.4,1.8) circle [radius=0.04] node [above]
        {$w_1$}; 
       
      \draw[thick,opacity=0.4] (-3.8,1.9) to [out=80,in=180] (-1.4,4);
      \draw[thick,opacity=0.4] (-1.4,4) to [out=0,in=100] (1.35,1.9);
      \draw[thick,opacity=0.4] (1.35,1.9) to [out=280,in=10]
        (0.35,0.6);
      \draw[thick,opacity=0.4] (0.35,0.6) to [out=200,in=0]
        (-1.6,0.7);
      \draw[thick,opacity=0.4] (-1.6,0.7) to [out=180,in=290]
        (-3.8,0.8);
      \draw[thick,opacity=0.4] (-3.8,0.8) to [out=110,in=260]
        (-3.8,1.9);
   
      \node at (0.6,3.65) {$K$};

      \node at (-1.25,-0.3) {(ii)};

      \filldraw [opacity=0.8, fill=gray] (4.9,3.4) -- (5.5,1.05) --
        (2.6,1.8) -- cycle;
      \filldraw [opacity=0.8, fill=gray] (4.9,3.4) -- (5.5,1.05) --
        (6.7,2.4) -- cycle;
      \filldraw [fill=black] (4.4,1.7) -- (5.5,1.05) -- (4.9,3.4) --
        cycle; 
                
      \draw[fill] (4.4,1.7) circle [radius=0.04] node [above left]
        {$c_1$};
      \draw[fill] (5.5,1.05)  circle [radius=0.04] node [below right]
        {$c_2$};
      \draw[fill] (4.9,3.4) circle [radius=0.04] node [above] {$c_3$};
      \draw[fill] (6.7,2.4) circle [radius=0.04]  node [above]
        {$w_0$}; 
      \draw[fill] (2.6,1.8) circle [radius=0.04] node [above] {$w_1$};

      \draw[dashed,opacity=0.5] (4.4,1.7) -- (6.7,2.4);
      \draw[dashed,opacity=0.5] (4.4,1.7) -- (2.6,1.8);     
       
      \draw[thick,opacity=0.4] (2.2,1.9) to [out=80,in=180] (4.6,4);
      \draw[thick,opacity=0.4] (4.6,4) to [out=0,in=100] (7.35,1.9);
      \draw[thick,opacity=0.4] (7.35,1.9) to [out=280,in=10]
        (6.35,0.6);
      \draw[thick,opacity=0.4] (6.35,0.6) to [out=200,in=0] (4.4,0.7);
      \draw[thick,opacity=0.4] (4.4,0.7) to [out=180,in=290]
        (2.2,0.8);
      \draw[thick,opacity=0.4] (2.2,0.8) to [out=110,in=260]
        (2.2,1.9);
   
      \node at (6.6,3.65) {$K$};
    
      \node at (4.75,-0.3) {(iii)};    
    \end{tikzpicture}
    \caption{Examples of $j$-flowers in a $k$-simplex $K$, for $k=4$
      and $j=1,2,3$.\newline
      (i) The $1$-flower in $K$ with centre $C=\{c_1\}$ (bold black)
      and petals $P_i = C \cup \{w_i\}$, $i=0,1,2,3$ (grey).\newline
      (ii) The $2$-flower in $K$ with centre $C=\{c_1,c_2\}$ (bold
      black) and petals $P_i = C \cup \{w_i\}$, $i=0,1,2$
      (grey).\newline
      (iii) The $3$-flower in $K$ with centre $C=\{c_1,c_2,c_3\}$
      (bold black) and petals $P_i = C \cup \{w_i\}$, $i=0,1$ (grey).}
    \label{fig:flower}
  \end{figure}
        
  Observe that for each $k$-simplex $K$ and each $(j-1)$-simplex $C
  \subseteq K$, there is a unique $j$-flower in $K$ with centre $C$,
  namely
  \begin{equation}\label{eq:flowerinK}
    \mathcal{F}(K,C) := \{C \cup \{w\} \mid w \in K \setminus C\}.
  \end{equation}
\end{definition}
\noindent
When $j$ is clear from the context, we simply refer to a $j$-flower as
a \emph{flower}. 

A \emph{$j$-cycle} is a set $J$ of $j$-simplices such that every
$(j-1)$-simplex is contained in an \emph{even} number of $j$-simplices
in $J$.

\begin{definition}\label{def:mj}
  A \emph{copy of $\Mj$} (see Figure~\ref{fig:mj}) in a $k$-complex
  $\cG$ is a triple $(K,C,J)$ where
  \renewcommand{\theenumi}{(M\arabic{enumi})}
  \begin{enumerate}
  \item\label{Mj:simplex}
    $K$ is a $k$-simplex in $\cG$;
  \item\label{Mj:flower}
    $C$ is a $(j-1)$-simplex in $K$ such that each petal of the flower
    $\mathcal{F}=\mathcal{F}(K,C)$ is contained in \emph{no other}
    $k$-simplex of $\cG$;
  \item\label{Mj:cycle}
    $J$ is a $j$-cycle in $\cG$ that contains \emph{exactly one petal}
    of the flower $\mathcal{F}$, i.e.\ there exists a vertex $w_0 \in
    K \setminus C$ such that 
    \begin{equation*}
      J \cap \mathcal{F} =\Big\{ C \cup \{w_0\} \Big\}.
    \end{equation*}
  \end{enumerate}
  \renewcommand{\theenumi}{(\roman{enumi})}
\end{definition}

\begin{figure}[htbp]
  \centering
  \begin{tikzpicture}[scale=1.0]
    \draw[opacity=0.85, pattern=north west lines, pattern color=gray]
      (-1.5,2) -- (-0.6,1) -- (0.8,2.2) -- cycle;
    \filldraw [opacity=0.85,fill=gray] (-1.5,2) -- (-0.6,1) --
      (-3.1,1.0)  -- cycle;
    \filldraw [opacity=0.85,fill=gray] (-1.5,2) -- (-0.6,1) --
      (-2.9,2.2)  -- cycle;
    \filldraw [opacity=0.85, fill=gray] (-1.5,2) -- (-0.6,1) --
      (-1.2,3.5) -- cycle;
    
    \draw[very thick] (-0.6,1) -- (-1.5,2);  
        
    \draw[fill] (-1.5,2) circle [radius=0.04] node [above left]
      {$c_1$};
    \draw[fill] (-0.6,1)  circle [radius=0.04] node [below right]
      {$c_2$};
    \draw[fill] (0.8,2.2) circle [radius=0.04] node [above] {$w_0$};
    \draw[fill] (-1.2,3.5) circle [radius=0.04]  node [above] {$w_1$};
    \draw[fill] (-2.9,2.2) circle [radius=0.04] node [above] {$w_2$}; 
    \draw[fill] (-3.1,1.0) circle [radius=0.04]  node [left] {$w_3$}; 
      
    \draw[fill,opacity=0.85,pattern=north west lines,pattern color=gray]
      (2.7,2) -- (3.6,1) -- (5,2.2) -- cycle;
    \filldraw [opacity=0.25,fill=gray] (3.6,1) -- (5,2.2) -- (4.9,0.6)
      -- cycle;
    \filldraw [opacity=0.25,fill=gray] (3.6,1) -- (4.9,0.6) --
      (3.1,0.2) -- cycle;
    \filldraw [opacity=0.25,fill=gray] (3.6,1) -- (3.1,0.2) -- (2.7,2)
      -- cycle;

    \draw[dashed,opacity=0.5] (2.7,2) -- (4.9,0.6);
    
    \draw[fill] (2.7,2) circle [radius=0.04] node[above left] {$c_1$};
    \draw[fill] (3.6,1) circle [radius=0.04] node[below right]
      {$c_2$};
    \draw[fill] (5,2.2) circle [radius=0.04] node [above] {$w_0$};
    \draw[fill] (4.9,0.6) circle [radius=0.04] node [below] {$j_1$};
    \draw[fill] (3.1,0.2) circle [radius=0.04] node [below] {$j_2$};
    
    \node at (5.3,1.0) {$J$};

    \draw[thick,opacity=0.4] (-3.6,1.9) to [out=80,in=180] (-1.4,3.9);
    \draw[thick,opacity=0.4] (-1.4,3.9) to [out=0,in=100] (1.35,1.9);
    \draw[thick,opacity=0.4] (1.35,1.9) to [out=280,in=10] (0.35,0.6);
    \draw[thick,opacity=0.4] (0.35,0.6) to [out=200,in=0] (-1.6,0.7);
    \draw[thick,opacity=0.4] (-1.6,0.7) to [out=180,in=290]
      (-3.6,0.8);
    \draw[thick,opacity=0.4] (-3.6,0.8) to [out=110,in=260]
      (-3.6,1.9);
   
    \node at (0.45,3.65) {$K$};
   
    \node at (-1.2,-0.6) {(i)};
    \node at (4,-0.6) {(ii)};
  \end{tikzpicture}
  \caption{A copy of $M_j$, for $k=5$ and $j=2$.
    The striped $j$-simplices are identified.\newline
    (i) The $k$-simplex $K$ that contains the flower
    $\mathcal{F}(K,C)$ with centre $C = \{c_1,c_2\}$ and petals
    $P_i=C \cup \{w_i\}$, for $i=0,1,2,3$. Each petal $P_i$ is
    contained in no other $k$-simplex except $K$.\newline
    (ii) The $j$-cycle $J$ consisting of the $j$-simplices
    $P_0=\{c_1,c_2,w_0\}$, $\{c_2,w_0,j_1\}$,  $\{c_2,j_1,j_2\}$,
    $\{c_1,c_2,j_2\}$, $\{c_1,j_1,j_2\}$ and  $\{c_1,w_0,j_1\}$. It
    intersects the flower $\mathcal{F}(K,C)$ only in the petal
    $P_0$.}
  \label{fig:mj}
\end{figure}
We will see in Section~\ref{sec:intuition:minob} that a copy of $M_j$
can be interpreted as a minimal obstruction for \formalconnectedness.

The random $k$-complex $\cG_p$ can be viewed as a \emph{process}, by
assigning a \emph{birth time} to each $k$-simplex. More precisely, for
each $(k+1)$-set of vertices in $[n]$ independently, sample a birth
time uniformly at random from $[0,1]$. (With probability $1$ no two
$(k+1)$-sets have the same birth time.)
Then $\cG_p$ is exactly the complex generated by the $(k+1)$-sets with
birth times at most $p$, by taking the downward-closure. If $p$ is
gradually increased from $0$ to $1$, we may interpret $\cG_p$ as a
process. Thus, we can define $\pMj$ as the birth time of the
$k$-simplex whose appearance causes the last copy of $\Mj$ to
disappear. More formally, let
\begin{equation} \label{eq:pMj}
  \pMj := \sup \{ p \in [0,1] \mid \cG_p \mbox{ contains a copy of }
  \Mj \}.
\end{equation}

Our first main result states that the value $\pMj$ is the hitting time
for \connectedness\ of $\cG_p$ and  is ``close'' to $\pj$ defined
in~\eqref{eq:pj}, implying that $\pj$ is in fact a sharp threshold for
\formalconnectedness.

\begin{thm} \label{thm:gentheor}
  Let $k\geq2$ be an integer and let $\omega$ be any function of $n$
  which tends to infinity as $n \rightarrow \infty$. For each $j \in
  [k-1]$, with high probability the following statements hold.
  \begin{enumerate}
  \item\label{thm:gentheor:pMj}
    $\frac{(j+1)\log n+\log\log n-\omega}{(k-j+1)n^{k-j}}(k-j)! < \pMj
    < \frac{(j+1)\log n+\log\log n+\omega}{(k-j+1)n^{k-j}}(k-j)!$.
  \item\label{thm:gentheor:subcrit}
    For all $p<\pMj$, $\cG_p$ is not \formalconnected, i.e.
    \begin{equation*}
      H^0(\cG_p;\FF_2)\neq \FF_2 \quad \mbox{ or } \quad H^{i}(\cG_p;\FF_2)\neq 0
      \mbox{ for some }i \in [j].
    \end{equation*}
  \item\label{thm:gentheor:supercrit}
    For all $p\geq\pMj$, $\cG_p$ is \formalconnected, i.e.
    \begin{equation*}
      H^0(\cG_p;\FF_2)= \FF_2 \quad \mbox{ and } \quad H^{i}(\cG_p;\FF_2) = 0
      \mbox{ for all } i \in [j].
    \end{equation*}
  \end{enumerate}
\end{thm}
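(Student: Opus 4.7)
The plan is to combine a deterministic structural argument with first- and second-moment calculations and, for the hardest direction, a search-process argument ruling out large cohomological obstructions. First I would prove the key deterministic fact: a single copy $(K,C,J)$ of $M_j$ in any $k$-complex $\cG$ forces $H^j(\cG;\FF_2)\neq 0$, and so prevents \connectedness. By~\ref{Mj:flower}, every $(j+1)$-simplex of $\cG$ containing a petal $P_i$ must be a $(j+1)$-subset of $K$, hence of the form $C\cup\{w_i,w_\ell\}$ for some $\ell\neq i$. Given any $(j+1)$-chain $T$ with $\partial T=J$, set $t_{ab}:=\mathbf{1}[C\cup\{w_a,w_b\}\in T]$. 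Condition~\ref{Mj:cycle} then forces $\sum_{\ell\neq 0}t_{0\ell}\equiv 1$ and $\sum_{\ell\neq i}t_{i\ell}\equiv 0$ for $i\geq 1$, modulo~$2$. Summing these $k-j+1$ equations, the left side counts every unordered pair $\{a,b\}$ exactly twice and thus vanishes, while the right side equals $1$: contradiction. Hence $[J]\neq 0$ in $H_j(\cG;\FF_2)$, and the Universal Coefficient Theorem transports this to a non-trivial class in $H^j(\cG;\FF_2)$.

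With this deterministic obstruction in hand, part~\ref{thm:gentheor:pMj} reduces to moment estimates on an auxiliary count, most naturally the number of \emph{near-isolated flowers}---triples $(K,C,w_0)$ for which $K$ is a $k$-simplex, every petal of $\mathcal{F}(K,C)$ lies in no other $k$-simplex, and $P_0$ is contained in at least one short $j$-cycle avoiding the other petals. A first-moment computation uses $p\cdot(k-j+1)\binom{n-j-1}{k-j}=(j+1)\log n+\log\log n\pm\omega$ at $p=p_j^\pm$ to give an expected count of order $e^{\pm\omega}$; Markov rules out near-isolated flowers at the upper endpoint, while a Chebyshev/second-moment argument gives existence at the lower one. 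Since a near-isolated flower together with a short cycle through $P_0$ produces a copy of $M_j$, and every copy of $M_j$ contains a near-isolated flower, this pins $p_{M_j}$ inside the claimed window and yields~\ref{thm:gentheor:pMj}. For~\ref{thm:gentheor:subcrit}, the non-monotonicity of the $M_j$-count means one cannot simply invoke existence of a copy at each $p<p_{M_j}$; I would instead use the local obstacles of Definition~\ref{def:localobstacle}, whose monotone evolution (Lemma~\ref{lem:localobstacle}) guarantees a genuine obstruction to \connectedness\ throughout the entire subcritical range.

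The main obstacle is~\ref{thm:gentheor:supercrit}: showing that above $p_{M_j}$ no hidden obstacle survives. Topological connectedness is classical since $p_j$ lies well above the vertex-connectedness threshold for $G_p$, and $H^i(\cG_p;\FF_2)=0$ for $i<j$ follows by induction since the thresholds $p_{j'}$ for $j'<j$ are strictly smaller. The new and delicate step is $H^j(\cG_p;\FF_2)=0$. I would assume a non-trivial $j$-cocycle $\phi$ exists and modify $\phi$ by $(j-1)$-coboundaries to minimise its support on the $j$-simplices of $\cG_p$. A minimal such support ought to pin against a copy of $M_j$: its petal-structure would supply a flower with exclusive petals and the support itself the required $j$-cycle $J$. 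The hard part is ruling out minimal supports that are not small. Here I would invoke a search-process argument (as indicated by Lemma~\ref{lem:largesupport}) that enumerates candidate large supports with enough precision that a union bound, weighted by the vanishing probability $(1-p)^{\Theta(n^{k-j})}$ per candidate, sums to $o(1)$ at $p_j^+$. Carrying this out uniformly through the window so that the two hitting times truly coincide is what I expect to require the most technical care.
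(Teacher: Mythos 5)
Your deterministic step is correct and is essentially the homological dual of the paper's argument (the paper instead exhibits the bad function $f=\mathbf{1}_{\mathcal{F}(K,C)}$, which is a cocycle pairing oddly with $J$), and your overall architecture for~\ref{thm:gentheor:pMj} and~\ref{thm:gentheor:supercrit} --- moment estimates on flower-type obstructions, then a minimal-support/traversability/search-process analysis --- matches the paper's. But there is a genuine gap in your treatment of~\ref{thm:gentheor:subcrit}. You propose to cover the whole subcritical range by appealing to local obstacles and Lemma~\ref{lem:localobstacle}. This fails for two reasons. First, Lemma~\ref{lem:localobstacle} only asserts monotone evolution of local obstacles for $p\ge\bar p_j$, i.e.\ on a vanishingly thin terminal piece of $[0,\pMj)$; it says nothing about the bulk of the subcritical interval. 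Second, a local obstacle is \emph{not} by itself a cohomological obstruction: it is merely a $k$-simplex with $k-j+1$ exclusive $j$-simplices, with neither the flower structure of~\ref{Mj:flower} nor the $j$-cycle of~\ref{Mj:cycle}, so its presence does not force $H^j\neq0$. What the paper actually needs here is Lemma~\ref{lem:phantomlemma}: three specific copies of $\Mj$ that each \emph{persist} throughout one of the subintervals $[\pjmom,\pjone]$, $[\pjone,\pjmin]$, $[\pjmin,\pMj)$, proved via a dangerous-set/Chernoff survival argument, a uniform-birth-time conditioning argument, and the equivalence of $\Mjmin$ and $\Mj$ via $j$-shells (Lemma~\ref{lem:jshells}). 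Nothing in your proposal substitutes for this covering argument, and it is the heart of the subcritical case precisely because of the non-monotonicity you correctly identify.

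A second, smaller gap concerns the upper bound in~\ref{thm:gentheor:pMj} and the uniformity in~\ref{thm:gentheor:supercrit}. Since the presence of a copy of $\Mj$ is not monotone decreasing, Markov's inequality at the single time $p_j^+$ does not bound $\pMj$ from above: copies could reappear later. The paper closes this via Lemma~\ref{lem:localobstacle} ($\pMj=\pstMj$ whp), and the same mechanism (Lemma~\ref{lem:nomoreMj}: a newly born small-support cocycle forces a newly born local obstacle, which whp does not happen after $\bar p_j$) is what propagates the small-support exclusion of Lemma~\ref{lem:smallsupport}, valid only at $p=(1+o(1))p_j$, to all $p\ge\pMj$. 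You flag this as ``requiring technical care'' but do not identify the argument; as stated, your proof of~\ref{thm:gentheor:supercrit} only rules out obstructions in a window around $p_j$, not for all $p\ge\pMj$ simultaneously.
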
 

For the case $j=k-1$, Theorem~\ref{thm:gentheor} gives a threshold
$p_{k-1}=\frac{k \log n + \log\log n}{2n}$ for \mwconnectedness, which
is about half as large as the threshold $\frac{k \log n}{n}$ in
Theorem~\ref{thm:meshwall} for $\cY_p$. The reason for this is that
the minimal obstructions are different:\ in $\cY_p$ the minimal
obstruction is a $(k-1)$-simplex which is not contained in any
$k$-simplex of the complex (such a $(k-1)$-simplex is called
\emph{isolated}). By definition, isolated $(k-1)$-simplices do not
exist in $\cG_p$, because $\cG_p$ contains only those
$(k-1)$-simplices that lie in some $k$-simplex. 

Observe that Theorem~\ref{thm:gentheor}~\ref{thm:gentheor:subcrit}
and~\ref{thm:gentheor:supercrit} provide a hitting time result for the
process described above. A similar result was proved by Kahle and
Pittel~\cite{KahlePittel16} for $\cY_p$, but only for the
two-dimensional case. They considered the random complex process
associated with $\cY_p$ and related the vanishing of the first
cohomology group to the disappearance of the last isolated edge (i.e.\
$1$-simplex). As a corollary of Theorem~\ref{thm:gentheor}, we obtain
a hitting time result for $\cY_p$ for general $k\geq 2$. To
this end, let 
\begin{equation*}
  \pisol := \sup\{p\in[0,1] \mid \cY_p\mbox{ contains isolated
  $(k-1)$-simplices}\}
\end{equation*}
be the birth time of the $k$-simplex whose appearance causes the last
isolated $(k-1)$-simplex in $\cY_p$ to disappear and let
\begin{equation*}
  \pconn := \sup\{p\in[0,1] \mid H^{k-1}(\cY_p;\FF_2)\neq0\}
\end{equation*}
be the time when $\cY_p$ becomes \mwconnected.

\begin{cor}\label{cor:hittingtimeYp}
  Let $k\geq 2$ be an integer. Then, with high probability
  \begin{equation*}
    \pconn = \pisol.
  \end{equation*}
  In other words, with high probability the random process associated
  with $\cY_p$ becomes \mwconnected\ at exactly the moment when the last 
  isolated $(k-1)$-simplex disappears.
\end{cor}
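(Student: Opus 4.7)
The plan is to reduce the corollary to Theorem~\ref{thm:gentheor} applied with $j=k-1$, via the key observation that the two models $\cG_p$ and $\cY_p$ coincide once $\cY_p$ has no isolated $(k-1)$-simplex. The easy inequality $\pisol \leq \pconn$ is deterministic and classical: an isolated $(k-1)$-simplex $\sigma$ in $\cY_p$ gives rise to a $(k-1)$-cocycle $\mathbf{1}_\sigma$ (its coboundary vanishes since no $k$-simplex contains $\sigma$) which fails to be a coboundary because the complete $k$-skeleton on $[n]$ has vanishing $(k-1)$-st cohomology; hence $[\mathbf{1}_\sigma]\neq0$ in $H^{k-1}(\cY_p;\FF_2)$.

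For the reverse inequality, the crucial observation is as follows. Suppose every $(k-1)$-simplex of $\cY_p$ lies in some $k$-simplex, i.e.\ every $k$-subset of $[n]$ is contained in a hyperedge of $G_p$. Then for any $i \in \{0,\dots,k-2\}$, any $(i+1)$-subset $\rho$ of $[n]$ can be extended to a $k$-subset $\sigma \supseteq \rho$, which by hypothesis is contained in some $k$-simplex $K$ of $G_p$; hence $\rho \subset K$, so $\rho$ is a simplex of $\cG_p$. Thus $\cG_p$ contains the full $(k-1)$-skeleton on $[n]$, and since its $k$-simplices are by definition those of $\cY_p$, we obtain $\cG_p = \cY_p$ for every $p \geq \pisol$.

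To finish I would apply Theorem~\ref{thm:gentheor}~\ref{thm:gentheor:supercrit} with $j=k-1$ to $\cG_p$ at time $p = \pisol$. This requires $\pisol > p_{M_{k-1}}$ whp. Theorem~\ref{thm:gentheor}~\ref{thm:gentheor:pMj} locates $p_{M_{k-1}}$ whp near $\frac{k\log n + \log\log n}{2n}$, whereas a routine first- and second-moment calculation (the expected number of isolated $(k-1)$-simplices at $p = (k\log n - \omega)/n$ is asymptotically $e^\omega/k!$, which tends to infinity) yields $\pisol \geq (k\log n - \omega)/n$ whp; since these two windows are separated by nearly a factor of two for slowly-growing $\omega$, $\pisol > p_{M_{k-1}}$ whp. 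Then $\cG_p$ is \formalconnected\ for all $p \geq \pisol$, and since $\cG_p = \cY_p$ in this range, $H^{k-1}(\cY_p;\FF_2) = 0$, giving $\pconn \leq \pisol$ whp. The one mildly non-obvious step is the model-equality observation; once it is in place, the corollary follows mechanically from Theorem~\ref{thm:gentheor} together with standard estimates on $\pisol$.
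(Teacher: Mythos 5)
Your proposal is correct and follows essentially the same route as the paper: the deterministic direction $\pisol\le\pconn$ via the non-cobounding cocycle $\mathbf{1}_\sigma$ (the paper witnesses this with the $(k-1)$-shells through $\sigma$, which is equivalent to your full-skeleton argument), the observation that $\cY_p=\cG_p$ once no $k$-set is isolated, and the comparison $\pisol>p_{M_{k-1}}$ whp from the standard location of $\pisol$ versus Theorem~\ref{thm:gentheor}~\ref{thm:gentheor:pMj}, followed by Theorem~\ref{thm:gentheor}~\ref{thm:gentheor:supercrit}. No gaps.
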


Our last main result gives an explicit expression for the limiting
probability of the random complex $\cG_p$ being \formalconnected\
inside the critical window given by the threshold $\pj$ (cf.\
Theorems~\ref{thm:ERcritical} and~\ref{thm:KPcritical}). More
generally, we prove that the \emph{dimension} of the $j$-th cohomology
group with coefficients in $\FF_2$ converges in distribution to a
Poisson random variable.

\begin{thm}\label{thm:critwindow1}
  Let $k\geq 2$ be an integer, $j\in[k-1]$ and $\const\in\mathbb{R}$
  be a constant. Suppose that $(\const_n)_{n \geq 1}$ is a sequence of real
  numbers that converges to $\const$ as $n \rightarrow \infty$. If
  \begin{equation*}
    p = \frac{(j+1)\log n+\log\log n+\const_n}{(k-j+1)n^{k-j}}(k-j)!,
  \end{equation*}
  then $\dim \left( H^j(\cG_p; \FF_2) \right)$ converges in
  distribution to a Poisson random variable with expectation 
  \begin{equation*}
    \lambda_j := \frac{(j+1)e^{-\const}}{(k-j+1)^2 j!},
  \end{equation*} 
  while whp $H^0(\cG_p;\FF_2)=\FF_2$ and $H^i(\cG_p;\FF_2)=0$ for all
  $i \in [j-1]$. In particular,
  \begin{equation*}
    \Pr \left( \cG_p \mbox{ is \connected} \right)
    \xrightarrow{n \rightarrow \infty } e^{-\lambda_j}.
  \end{equation*}
\end{thm}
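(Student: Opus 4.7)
The plan is to identify $\dim H^j(\cG_p;\FF_2)$ whp with the count $X$ of a single type of minimal obstruction, and then establish $X\xrightarrow{d}\mathrm{Po}(\lambda_j)$ by the method of factorial moments. The claims $H^0(\cG_p;\FF_2)\cong\FF_2$ and $H^i(\cG_p;\FF_2)=0$ for $i\in[j-1]$ come almost for free: inside the critical window $p=\Theta(\log n/n^{k-j})$ is polynomially larger than $p_i=\Theta(\log n/n^{k-i})$ for every $i\in[j-1]$, so for each such $i$ we sit deep in the supercritical regime of Theorem~\ref{thm:gentheor}\ref{thm:gentheor:supercrit} applied with parameter $i$, which gives that $\cG_p$ is $i$-cohom-connected whp.

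For the main reduction, let $X$ be the number of pairs $(K,C)$ where $K$ is a $k$-simplex of $\cG_p$, $C\subseteq K$ is a $(j-1)$-simplex, and every petal of $\mathcal{F}(K,C)$ is contained in no $k$-simplex of $\cG_p$ other than $K$. Each such pair produces a cocycle $\phi_{K,C}\in C^j(\cG_p;\FF_2)$ obtained as the indicator function of the petal set of $\mathcal{F}(K,C)$: any $(j+1)$-simplex containing only one petal would force another $k$-simplex through that petal, contradicting isolation, so the coboundary of $\phi_{K,C}$ vanishes. The non-triviality of $[\phi_{K,C}]$ in $H^j$ is witnessed by any $j$-cycle $J$ meeting $\mathcal{F}(K,C)$ in a single petal, i.e.\ by the existence of a copy of $M_j$ at $(K,C)$. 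The control on \emph{local obstacles} (Lemma~\ref{lem:localobstacle}) and the search-process bound on \emph{large} obstructions (Lemma~\ref{lem:largesupport}) developed for Theorem~\ref{thm:gentheor} were designed precisely so that no other source of $j$-cohomology survives, and the classes $[\phi_{K,C}]$ from distinct bad pairs are linearly independent since in this regime their supports are disjoint whp. Together these give $\dim H^j(\cG_p;\FF_2)=X$ whp.

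To compute $\EE[X]$, note that for a fixed pair $(K,C)$ the probability of being bad equals $p(1-p)^{N_{K,C}}$, where $N_{K,C}$ counts the $(k+1)$-sets other than $K$ containing at least one petal of $\mathcal{F}(K,C)$. The asymptotic $N_{K,C}\sim (k-j+1)n^{k-j}/(k-j)!$ combined with the definition of $p$ yields $(1-p)^{N_{K,C}}\sim e^{-\const}/(n^{j+1}\log n)$, and summing over the $\binom{n}{k+1}\binom{k+1}{j}$ choices of $(K,C)$ gives $\EE[X]\to\lambda_j$. To upgrade this to Poisson convergence I would verify $\EE[(X)_r]\to\lambda_j^r$ for each fixed $r\ge 1$: the sum over $r$-tuples $((K_1,C_1),\ldots,(K_r,C_r))$ is dominated by those whose $k$-simplices are pairwise vertex-disjoint, for which the isolation events involve essentially disjoint $(k+1)$-tuples, the joint probability factorises, and the sum produces $\lambda_j^r$ in the limit.

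The main technical obstacle will be showing that $r$-tuples whose $K_a$'s share vertices contribute only $o(1)$ to $\EE[(X)_r]$. I would classify these by the intersection pattern of the $K_a$'s and, when $K_a=K_b$, by the relative positions of the centres $C_a, C_b$ inside $K_a$: every shared vertex costs a factor $n$ from the combinatorial count, whereas the isolation requirement can only tighten (coinciding petals must satisfy it simultaneously), so each non-disjoint overlap type yields joint probability smaller by a factor $n^{-\Omega(1)}$ than the disjoint contribution and sums to $o(1)$. This establishes $\EE[(X)_r]=\lambda_j^r+o(1)$ and hence $X\xrightarrow{d}\mathrm{Po}(\lambda_j)$ via the method of factorial moments; combined with $\dim H^j(\cG_p;\FF_2)=X$ whp, this yields the distributional statement, and the ``in particular'' conclusion follows from $\Pr[\mathrm{Po}(\lambda_j)=0]=e^{-\lambda_j}$.
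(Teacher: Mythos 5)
Your overall strategy is exactly the paper's: identify $\dim H^j(\cG_p;\FF_2)$ whp with the number of copies of $\Mjmin$, using the traversable-support machinery to rule out any other source of cohomology, and then prove Poisson convergence of that count by the method of (factorial, equivalently binomial) moments; the treatment of $H^0$ and $H^i$ for $i\in[j-1]$ via Theorem~\ref{thm:gentheor}~\ref{thm:gentheor:supercrit} is also the same. Your expectation computation and the value of $\lambda_j$ are correct. However, two steps as written would not survive scrutiny.

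First, linear independence of the classes $[\phi_{K,C}]$ does \emph{not} follow from disjointness of the supports alone: you must show that for every non-empty subset $I$ of the bad pairs, the sum $\sum_{i\in I}\phi_i$ is not a coboundary, and a $j$-cycle witnessing non-triviality of one flower could a priori meet the other flowers as well. The paper repairs this by picking a $j$-simplex $L$ in the combined support $S$, invoking Lemma~\ref{lem:jshells} (applicable since $p>\pjone$) to produce $\Theta(n)$ $j$-shells through $L$, and noting that $|S|=o(n)$, so some shell meets $S$ only in $L$; you need this (or an equivalent) argument. Second, your heuristic for the overlapping $r$-tuples --- that ``the isolation requirement can only tighten'' --- is false as a mechanism: when two pairs share a $k$-simplex you pay the factor $p$ only once (a gain of $p^{-1}=\Theta(n^{k-j}/\log n)$ per configuration), and when petals coincide you pay the isolation probability $r$ only once per coincident petal (a gain of $r^{-1}=n^{(j+1)/(k-j+1)+o(1)}$), so the joint probability per configuration genuinely \emph{increases}. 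The conclusion is still correct, but only after comparing these gains against the combinatorial loss; the paper does this by observing that $u$ shared $(k+1)$-sets still require at least $u(k-j+1)+(k-j)$ isolated petals, which leaves a surplus factor $r^{k-j}=o(1)$ relative to the disjoint term. With these two repairs your argument coincides with the paper's proof.
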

\noindent
Indeed, in the proof we will see that whp
$\dim\left(H^j(\cG_p;\FF_2)\right)$ equals the number of pairs $(K,C)$
for which there exists a $j$-cycle $J$ such that $(K,C,J)$ is a copy
of $\Mj$ in $\cG_p$.

\subsection{Related work}

This paper draws inspiration from~\cite{LinialMeshulam06} 
and~\cite{MeshulamWallach08}, but the proof techniques are
considerably different. We first note that in $\cY_p$ the presence of
the full $(k-1)$-dimensional skeleton trivially yields the topological
connectedness of $\cY_p$ and the vanishing of all the $i$-th
cohomology groups with $i \in [k-2]$. This is not true in $\cG_p$ and
therefore we need to consider \emph{all} cohomology groups up to
dimension $j$, for each $j\in[k-1]$. 

Moreover, in~\cite{LinialMeshulam06} and~\cite{MeshulamWallach08} one
standard application of the second moment method is sufficient for the
analysis of the subcritical case (i.e.\ statement (i)) of
Theorem~\ref{thm:meshwall}. By contrast, \formalconnectedness\ 
of $\cG_p$ is \emph{not} a monotone increasing property
(see Example~\ref{ex:nonmono}). This makes the subcritical case far
from trivial. More precisely, it does not suffice to prove that
$\cG_p$ is not \connected\ at
$p_-=\frac{(j+1)\log n+\log\log n-\omega}{(k-j+1)n^{k-j}}(k-j)!$;
rather we need to show that whp the property is not satisfied for
\emph{any} $p$ up to and including $p_-$. Also observe that in terms
of our hitting time result, it is \emph{not} enough to show that for
each ``small'' $p$ whp $\cG_p$ is not \connected. Rather, we need to
know that $\cG_p$ is not \connected\ whp \emph{for all such $p$
simultaneously}. 

The proof of the supercritical case $p\geq \pMj$ is also more
challenging than for $\cY_p$; we are forced to derive stronger bounds
for the number of \emph{bad functions} (see
Definition~\ref{def:badfctn}), due to the fact that for $j=k-1$, the
threshold in Theorem~\ref{thm:gentheor} is about half as large as the
corresponding threshold in~\cite{MeshulamWallach08}. To this end, we
define a breadth-first search process that makes use of the new notion
of \emph{traversability} (Definition~\ref{def:traversability}).
Moreover, non-monotonicity of \connectedness\ forces us to prove that
for all $p\geq \pMj$, the probability of $\cG_p$ \emph{not} being
\connected\ is small enough that we can apply a union bound over all
relevant values of $p$.

\subsection{Paper overview}

This paper is structured as follows.

In Section~\ref{sec:preliminaries} we present some preliminary results
that we will use throughout the paper and we provide an overview of
cohomology theory, which will allow us to define the concept of a
\emph{bad function} (see Definition~\ref{def:badfctn}), a
configuration in a complex $\cG$ that is a witness for
$H^j(\cG;\FF_2)$ not vanishing. Section~\ref{sec:intuition} is devoted
to the main concepts and the proof ideas used in this paper. After
explaining why a copy of $\Mj$ is a minimal obstruction to
\connectedness, we heuristically show why the value $p_j$ defined
in~\eqref{eq:pj} should be the threshold for \connectedness\ and give
an outline of the proofs of our main theorems. 

In Section~\ref{sec:subcrit}, we provide auxiliary results needed for
the proofs of Theorem~\ref{thm:gentheor}~\ref{thm:gentheor:pMj}
and~\ref{thm:gentheor:subcrit}. We analyse the subcritical case when
$p<\pMj$ and determine the approximate value of $\pMj$, i.e.\ when the
last minimal obstruction disappears. In Section~\ref{sec:supcrit} we
define a breadth-first search process which will allow us to examine
the supercritical case when $p\geq \pMj$ and to obtain results
necessary for the proofs of
Theorem~\ref{thm:gentheor}~\ref{thm:gentheor:supercrit} and
Theorem~\ref{thm:critwindow1}. 

We prove the main results Theorems~\ref{thm:gentheor}
and~\ref{thm:critwindow1} and Corollary~\ref{cor:hittingtimeYp} in
Section~\ref{sec:proofs}, using the auxiliary results from
Sections~\ref{sec:subcrit} and~\ref{sec:supcrit}. Finally, in
Section~\ref{sec:concremarks}, we discuss some open problems.

\section{Preliminaries} \label{sec:preliminaries}

\subsection{Birth times} \label{sec:preliminaries:birth}

We mentioned in Section~\ref{sec:intro:mainres} how to use the
standard birth times interpretation to describe the binomial model
$\cG_p$ as a process. In this setting, it is useful to introduce the
operation of ``adding a simplex''.

\begin{definition} \label{def:addingsimplex}
  Given a complex $\cG$ on vertex set $V$ and a non-empty set $B
  \subseteq V$, we define $\cG + B$ to be the complex obtained by
  adding the set $B$ and its downward-closure to $\cG$, i.e.\
  \begin{equation*}
    \cG + B := \cG \cup \{ 2^B \setminus \emptyset \}.
  \end{equation*}
\end{definition} 
\noindent
Observe that if $B$ is already a simplex of $\cG$, then $\cG + B =
\cG$. With this operation, $\cG_p$ (interpreted as a process) may also
be described in the following way. If $p_K$ is the smallest birth time
larger than $p$ of any $k$-simplex $K$, then $\cG_{p_K}=\cG_p + K$.

A property $\mathcal{P}$ of $k$-complexes is called \emph{monotone
increasing} if $\mathcal{P}$ is closed under adding $k$-simplices. The
complement of a monotone increasing property is called \emph{monotone
decreasing}. Finally, $\mathcal{P}$ is \emph{monotone} if it is
monotone increasing or decreasing.

Considering the birth times interpretation, we shall take union bounds
over finite sets of birth times. With a slight abuse of terminology,
sometimes we will talk about taking ``union bounds over $p$'' in some
interval, which makes little sense if we think of $p$ as being able to
take any value within the interval, but indeed we are conditioning on
the set of birth times and taking the union bound over all birth times
in the relevant interval.

We also note that conditioned on a $k$-simplex not being present at
time $p=q_1$, the probability that it is present at time $q_2$ is
$\frac{q_2-q_1}{1-q_1}$. Thus we may obtain $\cG_{q_2}$ from
$\cG_{q_1}$ by \emph{exposing} an additional probability of
$\frac{q_2-q_1}{1-q_1}$. Since we will only ever want to consider such
a situation with $q_1=o(1)$, we often simply take $q_2-q_1$ as an
approximation (and lower bound) for $\frac{q_2-q_1}{1-q_1}$, or use
$q_2$ as an upper bound.

\subsection{Probabilistic tools} \label{sec:preliminaries:prob}

We frequently use the following Chernoff bound.

\begin{lem}[see e.g.~{\cite[Theorem 2.1]{JansonLuczakRucinskiBook}}]
  \label{lem:chernoff}
  Given a binomial random variable $X$ with expectation $\mu$ and a
  real number $a>0$,
  \begin{align*}
    \Pr(X\ge \mu + a) &\le \exp\left(-\frac{a^2}{2(\mu+a/3)}\right);\\
    \Pr(X\le \mu - a) &\le \exp \left( - \frac{a^2}{2\mu}\right).
  \end{align*}
\end{lem}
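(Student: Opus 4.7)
The plan is to use the classical Cram\'er--Chernoff exponential moment method. Write $X = \sum_{i=1}^n X_i$ as a sum of independent Bernoulli random variables with means $\pi_i$, so that $\mu = \sum_i \pi_i$. For any $t > 0$, Markov's inequality applied to $e^{tX}$ gives
\[
\Pr(X \geq \mu + a) \;\leq\; e^{-t(\mu + a)}\, \EE[e^{tX}],
\]
and by independence together with the inequality $1+x \leq e^x$,
\[
\EE[e^{tX}] \;=\; \prod_{i=1}^n \bigl(1 + \pi_i(e^t - 1)\bigr) \;\leq\; \exp\bigl(\mu(e^t-1)\bigr).
\]
A one-variable optimisation, choosing $t = \log(1 + a/\mu)$, then yields the relative-entropy bound
\[
\Pr(X \geq \mu + a) \;\leq\; \exp\bigl(-\mu\, h(a/\mu)\bigr), \qquad h(x) := (1+x)\log(1+x) - x.
\]
The symmetric argument applied to $e^{-tX}$ (for $0 < a < \mu$) gives the matching lower-tail bound with $h$ replaced by $\tilde h(x) := x + (1-x)\log(1-x)$.

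The remaining task is to convert these entropy bounds into the explicit polynomial forms stated in the lemma. For the lower tail this is painless: the Taylor expansion
\[
\tilde h(x) \;=\; \sum_{k \geq 2} \frac{x^k}{k(k-1)} \;=\; \frac{x^2}{2} + \frac{x^3}{6} + \frac{x^4}{12} + \cdots
\]
immediately implies $\tilde h(x) \geq x^2/2$ on $[0,1]$, which, on substituting $x = a/\mu$, gives exactly $\Pr(X \leq \mu - a) \leq \exp(-a^2/(2\mu))$. The upper tail requires the sharper inequality
\[
(1+x)\log(1+x) - x \;\geq\; \frac{x^2}{2 + 2x/3} \qquad (x \geq 0);
\]
after substituting $x = a/\mu$ this delivers $\exp\bigl(-a^2/(2(\mu + a/3))\bigr)$. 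I would verify the displayed inequality by setting $\varphi(x) := (2 + 2x/3)\,h(x) - x^2$ and checking $\varphi(0) = \varphi'(0) = \varphi''(0) = 0$ together with $\varphi'''(x) = (4x/3)/(1+x)^2 \geq 0$, from which $\varphi \geq 0$ follows by integrating three times.

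The conceptually routine parts are the moment generating function bound and the optimisation of $t$; the only step that genuinely requires care is the elementary but non-obvious upper-tail inequality isolating the factor $1/3$ in the denominator. This is where the ``Bernstein-type'' refinement lives: the cruder bound $h(x) \geq x^2/2$ suffices for the lower tail, but it would yield only $\exp(-a^2/(2\mu))$ in the upper direction, which is useless once $a$ is comparable to or larger than $\mu$. Nothing in the argument uses the hypergraph or simplicial complex structure: the bound is purely a statement about sums of independent $[0,1]$-valued random variables, and in particular applies to the indicator sums (numbers of simplices, flowers, bad functions, etc.) that appear throughout the rest of the paper.
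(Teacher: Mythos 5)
Your proof is correct: the moment-generating-function bound, the choice $t=\log(1+a/\mu)$, the Taylor-series lower bound $\tilde h(x)\ge x^2/2$, and the third-derivative verification that $\varphi'''(x)=\frac{4x/3}{(1+x)^2}\ge0$ all check out, and they reproduce the standard argument behind the cited Theorem~2.1 of Janson--\L uczak--Ruci\'nski. The paper itself gives no proof of this lemma (it is quoted as a known tool), so there is nothing to compare against beyond noting that your derivation is exactly the textbook route.
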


For the analysis of the critical window (cf.\
Theorem~\ref{thm:critwindow1}), we will need the method of moments, as
presented in the following lemma.

\begin{lem}[see e.g.~{\cite[Theorem~20.11]{FriezeKaronski16}}]
  \label{lem:metmoments}
  Let $(S_n)_{n \geq 1}$ be a sequence of sums of indicator random
  variables. Suppose that there exists $\lambda>0$ such that for every
  fixed integer $t\geq 1$
  \begin{equation*}
    \lim_{n\to \infty} \EE \binom{S_n}{t} = \frac{\lambda^t}{t!}.
  \end{equation*}
  Then, for every integer $s\geq 0$,
  \begin{equation*}
    \lim_{n\to\infty}\Pr(S_n = s) = e^{-\lambda}\frac{\lambda^s}{s!},
  \end{equation*}
  i.e.\ $S_n$ converges in distribution to a Poisson random variable
  with expectation $\lambda$. We write $S_n \xrightarrow{d}
  \mbox{Po}(\lambda)$.
\end{lem}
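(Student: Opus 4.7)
The plan is to deduce Poisson convergence from the hypothesised factorial moment convergence in the standard way, via the factorial moment expansion. For any non-negative integer-valued random variable $X$, Möbius inversion gives the identity
$$\Pr(X = s) = \sum_{t \geq s}(-1)^{t-s}\binom{t}{s}\EE\binom{X}{t},$$
which inverts the elementary relation $\EE\binom{X}{t} = \sum_{r \geq t}\binom{r}{t}\Pr(X = r)$. Applying this to $X = S_n$ and substituting the hypothesised limits $\EE\binom{S_n}{t} \to \lambda^t/t!$ formally yields
$$\sum_{t \geq s}(-1)^{t-s}\binom{t}{s}\frac{\lambda^t}{t!} = \frac{\lambda^s}{s!}\sum_{r \geq 0}\frac{(-\lambda)^r}{r!} = e^{-\lambda}\frac{\lambda^s}{s!},$$
which is precisely the target Poisson probability, so the skeleton of the argument is clear.

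The hard part will be justifying the exchange of $\lim_n$ with the infinite sum over $t$. The plan for this is to use Bonferroni-type inequalities for point probabilities: I would establish that for every integer $m \geq 0$,
$$\sum_{t=s}^{s+2m+1}(-1)^{t-s}\binom{t}{s}\EE\binom{S_n}{t} \leq \Pr(S_n = s) \leq \sum_{t=s}^{s+2m}(-1)^{t-s}\binom{t}{s}\EE\binom{S_n}{t}.$$
These follow pointwise from the identity $\binom{t}{s}\binom{X}{t} = \binom{X}{s}\binom{X-s}{t-s}$, which reduces the truncated sum to $\binom{X}{s}$ times a partial alternating binomial sum $\sum_{j=0}^{K-s}(-1)^j\binom{X-s}{j}$; this latter partial sum has sign $(-1)^{K-s}$ when $X > s$ and equals $1$ when $X = s$, delivering exactly the required two-sided bounds on the indicator $\mathbb{1}[X = s]$.

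Given these inequalities, the proof closes routinely. For each fixed $m$, both truncated sums contain only finitely many factorial moments, so the hypothesis lets one pass to the limit $n \to \infty$ termwise and obtain the same two-sided bounds with $\lambda^t/t!$ in place of $\EE\binom{S_n}{t}$. Sending $m \to \infty$ and using the absolute convergence of $\sum_t \binom{t}{s}\lambda^t/t!$ then pinches $\lim_n \Pr(S_n = s)$ to $e^{-\lambda}\lambda^s/s!$. The only genuine subtlety throughout is the tail control implicit in that last step, which is handled essentially for free by the alternating signs; an equivalent alternative would be to argue via the probability generating function $g_n(z) = \sum_t \EE\binom{S_n}{t}(z-1)^t$ and show $g_n(z) \to e^{\lambda(z-1)}$ for $z$ in a neighbourhood of $1$, whence convergence of every point mass follows.
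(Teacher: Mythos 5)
Your proof is correct: the Bonferroni-type truncations of the factorial-moment inversion formula are valid pointwise exactly as you describe, the termwise passage to the limit for each fixed $m$ is legitimate because only finitely many moments appear, and the absolute convergence of $\sum_{t\ge s}\binom{t}{s}\lambda^t/t!$ closes the pinching argument. The paper itself gives no proof of this lemma---it is quoted as a standard result from Frieze and Karo\'nski---and your argument is essentially the textbook proof found in that reference, so there is nothing to reconcile.
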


\subsection{Cohomology terminology} \label{sec:preliminaries:cohomology}

We formally introduce cohomology groups with coefficients in $\FF_2$
for a simplicial complex. The following notions are all standard,
except the definition of a bad function
(Definition~\ref{def:badfctn}).
   
Given a $k$-complex $\cG$, for each $j \in \{0,\ldots,k\}$
denote by $C^j(\cG)$ the set of \emph{$j$-cochains}, that is, the set
of 0-1 functions on the $j$-simplices. The \emph{support} of a
function in $C^j(\cG)$ is the set of $j$-simplices mapped to 1. Each
$C^j(\cG)$ forms a group with respect to point-wise addition modulo 2.
We define the \emph{coboundary operators} $\delta^j\colon C^j(\cG)\to
C^{j+1}(\cG)$ for $j=0,\ldots,k-1$ as follows: for $f\in C^j(\cG)$,
the $(j+1)$-cochain $\delta^jf$ assigns to each $(j+1)$-simplex
$\sigma$ the value 
\begin{equation*}
  \delta^jf(\sigma) :=
  \sum_{\tau\subset\sigma,\mbox{ }|\tau|= j+1}{f(\tau)} \quad 
  (\bmod \mbox{ } 2).
\end{equation*}

In addition, we denote by $\delta^{-1}$ the unique group homomorphism
$\delta^{-1}\colon \{0\} \rightarrow C^0(\cG)$. The $j$-cochains in
$\im\delta^{j-1}$ are called \emph{$j$-coboundaries}, and the
$j$-cochains in $\ker\delta^{j}$ are called \emph{$j$-cocycles}. A
straightforward calculation shows that each coboundary operator is a
group homomorphism and that every $j$-coboundary is also a
$j$-cocycle, i.e.\ $\im\delta^{j-1}$ is a subgroup of $\ker\delta^{j}$.
Therefore, we can define the $j$-th cohomology group of $\cG$ with
coefficients in $\FF_2$ as the quotient group
\begin{equation} \label{eq:cohomgroup}
  H^j(\cG;\FF_2) := \ker\delta^j / \im\delta^{j-1}.
\end{equation}

By definition, $H^j(\cG;\FF_2)$ vanishes if and only if every
$j$-cocycle is a $j$-coboundary. This motivates the following
definition of a \emph{bad function}.

\begin{definition} \label{def:badfctn}
  For a $k$-complex $\cG$ and $j \in [k-1]$,
  we say that a function $f\in C^j(\cG)$ is \emph{bad} if
  \begin{enumerate}
  \item $f$ is a $j$-cocycle, i.e.\ it assigns an even number of 1's
    to the $j$-simplices on the boundary of each $(j+1)$-simplex;
  \item $f$ is not a $j$-coboundary, i.e.\ it is not induced by a
    $(j-1)$-cochain.
  \end{enumerate}
\end{definition}
\noindent
Thus, $H^j(\cG;\FF_2)$ vanishes if and only if no bad function in
$C^j(\cG)$ exists. 

Recall that a set of $j$-simplices is a $j$-cycle if every
$(j-1)$-simplex is contained in an \emph{even} number of
$j$-simplices of the set. It is easy to see that if $f$ is a
$j$-cocycle and $J$ is a $j$-cycle such that the restriction $f|_J$
has support of odd size, then $f$ is not a $j$-coboundary and thus is
a bad function.

\section{Intuition and outline of proofs} \label{sec:intuition}

\noindent For the rest of the paper, let $j\in [k-1]$ be fixed.

\subsection{Minimal obstructions} \label{sec:intuition:minob}

Let us explain why  $\Mj$ (Definition~\ref{def:mj}) can be interpreted
as the (unique) minimal obstruction to \connectedness. Given a triple $(K,C,J)$
which forms a copy of $\Mj$ in a $k$-complex $\cG$, it is easy to
define a bad function $f\in C^j(\cG)$ (see
Definition~\ref{def:badfctn}): let $f$ take value $1$ on the petals of
the flower $\mathcal{F}(K,C)$ (see~\eqref{eq:flowerinK}) and $0$
everywhere else. Since the petals are all in the $k$-simplex $K$ but
in no further $k$-simplices, every $(j+1)$-simplex $L$ in $\cG$ is
even, because $L$ contains either two petals (if $C\subseteq
L\subseteq K$) or none (otherwise). However, $J$ would be a $j$-cycle
containing precisely one $j$-simplex, namely the petal $C\cup\{w_0\}$,
on which $f$ takes value $1$, ensuring that $f$ is not a
$j$-coboundary. Thus $f$ is bad and has support of size $k-j+1$, which
is the number of petals of $\mathcal{F}(K,C)$. 

In the following lemma we show that in fact such a bad function is the
only possibility for an obstruction which is minimal with respect to
the size of the support. Given a $k$-simplex $K$ and a collection $S$
of $j$-simplices, define $S_K$ to be the set of $j$-simplices of $S$
contained in $K$.

\begin{lem} \label{lem:minobst} 
  Let $\cG$ be a $k$-complex and let $S$ be the support of a
  $j$-cocycle. Then for each $k$-simplex $K$,
  \begin{enumerate}
  \item\label{minobst:size}
    either $S_K = \emptyset$ or both $|S_K|\geq k-j+1$ and
    $\bigcup_{\sigma \in S_K} \sigma = K$;
  \item\label{minobst:flower}
    if $|S_K|=k-j+1$, then $S_K$ forms a $j$-flower in $K$.
  \end{enumerate}
\end{lem}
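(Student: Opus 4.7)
The plan is a direct combinatorial argument based entirely on the cocycle condition applied to $(j+1)$-simplices contained in $K$; no topological input is needed, though conceptually the argument repackages the fact that $H^j(K;\FF_2)=0$ for $j\geq 1$ since $K$ is a simplex. The key observation is that, because $\cG$ is downward-closed and $K\in\cG$, every $(j+2)$-subset of $K$ is automatically a $(j+1)$-simplex of $\cG$, so the parity condition ``an even number of $(j+1)$-subsets lie in $S_K$'' applies to every $(j+2)$-subset of $K$.

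For part~\ref{minobst:size}, I would assume $S_K\neq\emptyset$, fix any $\sigma_0\in S_K$, and for each $w\in K\setminus\sigma_0$ (there are $k-j$ such vertices) apply the cocycle condition to the $(j+2)$-set $\sigma_0\cup\{w\}$, whose $(j+1)$-subsets are $\sigma_0$ together with $(\sigma_0\setminus\{u\})\cup\{w\}$ for $u\in\sigma_0$. Since $\sigma_0\in S_K$ and the total parity must be even, there exists $u_w\in\sigma_0$ with $\tau_w:=(\sigma_0\setminus\{u_w\})\cup\{w\}\in S_K$. The $\tau_w$ are pairwise distinct (each contains its own ``new'' vertex $w\notin\sigma_0$) and differ from $\sigma_0$, so $|S_K|\geq 1+(k-j)=k-j+1$; moreover $w\in\tau_w$ for each $w$, hence $\bigcup_{\sigma\in S_K}\sigma\supseteq\sigma_0\cup(K\setminus\sigma_0)=K$.

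For part~\ref{minobst:flower}, if $|S_K|=k-j+1$ then $S_K$ equals $\{\sigma_0\}\cup\{\tau_w:w\in K\setminus\sigma_0\}$ exactly, so each $u_w$ is uniquely determined, and it remains to show that all $u_w$ coincide. Suppose for contradiction $u_{w_1}\neq u_{w_2}$ for some $w_1,w_2\in K\setminus\sigma_0$, and apply the cocycle condition to $\sigma':=(\sigma_0\setminus\{u_{w_1}\})\cup\{w_1,w_2\}$. Its $(j+1)$-subsets are: $\tau_{w_1}$, which lies in $S_K$; the set $(\sigma_0\setminus\{u_{w_1}\})\cup\{w_2\}$, which differs from $\tau_{w_2}=(\sigma_0\setminus\{u_{w_2}\})\cup\{w_2\}$ (since $u_{w_1}\neq u_{w_2}$) and so lies outside $S_K$; and the sets $(\sigma_0\setminus\{u_{w_1},u\})\cup\{w_1,w_2\}$ for $u\in\sigma_0\setminus\{u_{w_1}\}$, each containing \emph{both} $w_1$ and $w_2$ and hence distinct from $\sigma_0$ and from every $\tau_w$, so also outside $S_K$. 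Only $\tau_{w_1}\in S_K$, an odd count, contradicting the cocycle condition. Writing $u^*$ for the common value and $C:=\sigma_0\setminus\{u^*\}$, one reads off $S_K=\{C\cup\{v\}:v\in K\setminus C\}=\mathcal{F}(K,C)$. The degenerate case $j=k-1$ (where $|K\setminus\sigma_0|=1$) causes no trouble, since there is only one $w$ and the coincidence claim is vacuous.

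The main thing to be careful about is the case analysis of $(j+1)$-subsets of $\sigma'$ in the uniqueness step: I must verify that none of them except $\tau_{w_1}$ can accidentally lie in $S_K$, and the tight equality $|S_K|=k-j+1$—which pins $S_K$ down to exactly $\{\sigma_0\}\cup\{\tau_w\}$—is precisely what lets the ``new vertex'' bookkeeping rule out every other candidate.
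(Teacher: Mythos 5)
Your proposal is correct and follows essentially the same route as the paper: part~\ref{minobst:size} is identical (your $\tau_w$ are the paper's $\sigma_i$), and for part~\ref{minobst:flower} your auxiliary $(j+2)$-set $\sigma'=(\sigma_0\setminus\{u_{w_1}\})\cup\{w_1,w_2\}$ is exactly the set $\sigma_1\cup\{v_i\}$ the paper uses, with the parity argument merely phrased contrapositively rather than directly. The bookkeeping in your uniqueness step checks out, so there is nothing further to add.
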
 

\begin{proof}
  \ref{minobst:size} Suppose $S_K \neq \emptyset$ and let $\sigma_0
  \in S_K$. Let the vertices of $K\setminus \sigma_0$ be denoted by
  $v_1, \ldots, v_{k-j}$.
  Each $(j+1)$-simplex $\sigma_0 \cup \{v_i\}$ has to be even with
  respect to $f$ and thus contains some $j$-simplex $\sigma_i \in S_K
  \setminus \{ \sigma_0\}$, which therefore contains $v_i$. The simplices
  $\sigma_0, \ldots, \sigma_{k-j}$ are distinct, because each $v_i$
  lies in $\sigma_i$ but in no other $\sigma_{i'}$. Therefore  $|S_K|
  \geq k-j+1$ and 
  
  \[ K \supseteq \bigcup_{\sigma \in S_K} \sigma \supseteq \sigma_0 \cup
  \{v_1,\ldots, v_{k-j}\} = K .\]

  \noindent
  \ref{minobst:flower} Suppose now that
  $S_K=\{\sigma_0,\ldots,\sigma_{k-j}\}$, with
  $\sigma_0,\dotsc,\sigma_{k-j}$ defined as above. For $2 \le i \le 
  k-j$, the $(j+1)$-simplex $\tau := \sigma_1\cup\{v_i\}$ contains
  $\sigma_1$, but no $\sigma_\ell$ with $\ell \notin \{1,i\}$. By the
  choice of $S$ as the support of a $j$-cocycle, $\tau$ is even and
  thus $\sigma_i \subset \tau$. This means that
  \begin{equation*}
    \sigma_1\cap\sigma_i = \tau\setminus\{v_1,v_i\} =
    \sigma_0\cap\sigma_1.
  \end{equation*}
  As this holds for all $i$, $S_K$ forms a flower in $K$ with centre
  $\sigma_0\cap\sigma_1$.
\end{proof}

Both the presence of a copy of $\Mj$ and \connectedness\ in $\cG_p$
are \emph{not} monotone properties, as the following example shows.

\begin{example}\label{ex:nonmono}
  Let $\cG$ be the $2$-complex on vertex set $\{1,2,3,4,5\}$ generated
  by the $3$-uniform hypergraph with hyperedges $\{1,2,3\}$ and
  $\{1,4,5\}$, see Figure~\ref{fig:nonmon}. Then $\cG$ is
  $1$-cohom-connected and thus contains no copies of $M_1$. Adding to
  $\cG$ the $2$-simplex $\{2,3,4\}$ (and its downward-closure) creates
  several copies of $M_1$ and thus yields a complex $\cG'$ which is
  not $1$-cohom-connected. If we further add the $2$-simplex
  $\{1,3,4\}$ to $\cG'$, we obtain a $2$-complex $\cG''$ which is
  $1$-cohom-connected and thus contains no copies of $M_1$.
\end{example}

\begin{figure}[htbp]
  \centering
  \begin{tikzpicture}[scale=0.75]
    \filldraw [opacity=0.5,fill=gray] (1.6,0) -- (0.8,2) -- (2.5,3.2)
      -- cycle;
    \filldraw [opacity=0.5,fill=gray] (3.4,0) -- (4.2,2) -- (2.5,3.2)
      -- cycle;
    
    \draw[fill] (2.5,3.2) circle [radius=0.05] node [above] {$1$};
    \draw[fill] (0.8,2) circle [radius=0.05] node [left] {$2$};
    \draw[fill] (1.6,0) circle [radius=0.05] node [below] {$3$};
    \draw[fill] (3.4,0) circle [radius=0.05] node [below] {$4$};
    \draw[fill] (4.2,2) circle [radius=0.05] node [right] {$5$};
  
    \node at (2.5,-0.8) {$\cG$};

    \begin{scope}[xshift=6cm]
      \filldraw [opacity=0.5,fill=gray] (1.6,0) -- (0.8,2) --
        (2.5,3.2) -- cycle;
      \filldraw [opacity=0.5,fill=gray] (3.4,0) -- (4.2,2) --
        (2.5,3.2) -- cycle;
      \filldraw [opacity=0.5,fill=gray] (1.6,0) -- (0.8,2) -- (3.4,0)
        -- cycle;
    
      \draw[fill] (2.5,3.2) circle [radius=0.05] node [above] {$1$};
      \draw[fill] (0.8,2) circle [radius=0.05] node [left] {$2$};
      \draw[fill] (1.6,0) circle [radius=0.05] node [below] {$3$};
      \draw[fill] (3.4,0) circle [radius=0.05] node [below] {$4$};
      \draw[fill](4.2,2) circle [radius=0.05] node [right] {$5$};
    
      \node at (2.5,-0.8) {$\cG'$};
    \end{scope}

    \begin{scope}[xshift=12cm]
      \filldraw [opacity=0.5,fill=gray] (1.6,0)  -- (0.8,2) --
        (2.5,3.2) -- cycle;
      \filldraw [opacity=0.5,fill=gray] (3.4,0)  -- (4.2,2)  --
        (2.5,3.2) -- cycle;
      \filldraw [opacity=0.5,fill=gray] (1.6,0) -- (0.8,2) -- (3.4,0)
        -- cycle;
      \filldraw [opacity=0.5,fill=gray] (1.6,0) -- (3.4,0) --
        (2.5,3.2) -- cycle;
    
      \draw[fill] (2.5,3.2) circle [radius=0.05] node [above] {$1$};
      \draw[fill] (0.8,2) circle [radius=0.05] node [left] {$2$};
      \draw[fill] (1.6,0) circle [radius=0.05] node [below] {$3$};
      \draw[fill] (3.4,0) circle [radius=0.05] node [below] {$4$};
      \draw[fill] (4.2,2) circle [radius=0.05] node [right] {$5$};
    
      \node at (2.5,-0.8) {$\cG''$};
    \end{scope}
  \end{tikzpicture}
  \caption{Adding simplices might create new copies of $\Mj$ or
    destroy existing ones.}
  \label{fig:nonmon}
\end{figure}
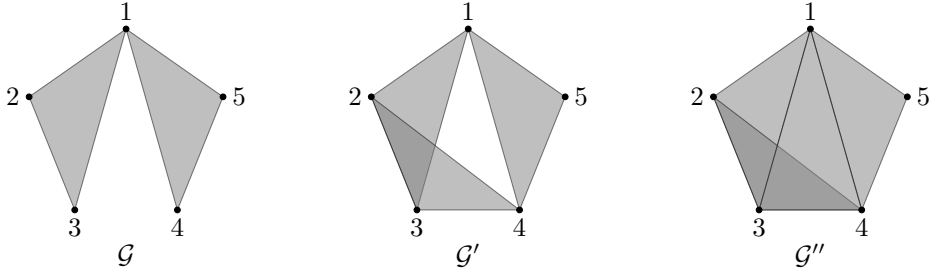

\subsection{Finding the threshold} \label{sec:intuition:heur}

In this section we provide a heuristic argument for why the threshold
for the disappearance of the last copy of $\Mj$ should be around
$\pj$. To do this, we will make use of a simplified version of the
obstruction $\Mj$.

\begin{definition}\label{def:mjminus}
  A \emph{copy of $\Mjmin$} (see Figure~\ref{fig:Mjmin}) in a
  $k$-complex $\cG$ is a pair $(K,C)$ where
  \renewcommand{\theenumi}{(M\arabic{enumi})}
  \begin{enumerate}
  \item\label{Mjmin:simplex}
    $K$ is a $k$-simplex in $\cG$;
  \item\label{Mjmin:flower}
    $C$ is a $(j-1)$-simplex in $K$ such that each petal of the flower
    $\mathcal{F}(K,C)$ is contained in \emph{no other}
    $k$-simplex of $\cG$.
  \end{enumerate}
  \renewcommand{\theenumi}{(\roman{enumi})}
\end{definition}

\begin{figure}[htbp]
  \centering
  \begin{tikzpicture}[scale=1]
    \draw[opacity=0.85, fill=gray] (-1.5,2) -- (-0.6,1) -- (0.8,2.2)
      -- cycle;
    \filldraw [opacity=0.85,fill=gray] (-1.5,2) -- (-0.6,1) --
      (-3.1,1.0)  -- cycle;
    \filldraw [opacity=0.85,fill=gray] (-1.5,2) -- (-0.6,1) --
      (-2.9,2.2)  -- cycle;
    \filldraw [opacity=0.85, fill=gray] (-1.5,2) -- (-0.6,1) --
      (-1.2,3.5) -- cycle;
    
    \draw[very thick] (-0.6,1) -- (-1.5,2);  
        
    \draw[fill] (-1.5,2) circle [radius=0.04] node [above left]
      {$c_1$};
    \draw[fill] (-0.6,1)  circle [radius=0.04] node [below right] 
      {$c_2$};
    \draw[fill] (0.8,2.2) circle [radius=0.04] node [above] {$w_0$}; 
    \draw[fill] (-1.2,3.5) circle [radius=0.04]  node [above] {$w_1$};
    \draw[fill] (-2.9,2.2) circle [radius=0.04] node [above] {$w_2$}; 
    \draw[fill] (-3.1,1.0) circle [radius=0.04]  node [left] {$w_3$}; 
      
    \draw[thick,opacity=0.4] (-3.6,1.9) to [out=80,in=180] (-1.4,3.9);
    \draw[thick,opacity=0.4] (-1.4,3.9) to [out=0,in=100] (1.35,1.9);
    \draw[thick,opacity=0.4] (1.35,1.9) to [out=280,in=10] (0.35,0.6);
    \draw[thick,opacity=0.4] (0.35,0.6) to [out=200,in=0] (-1.6,0.7);
    \draw[thick,opacity=0.4] (-1.6,0.7) to [out=180,in=290]
      (-3.6,0.8);
    \draw[thick,opacity=0.4] (-3.6,0.8) to [out=110,in=260]
      (-3.6,1.9);
   
    \node at (0.45,3.65) {$K$};
  \end{tikzpicture}
  \caption{A copy of $M_j^-$, for $k=5$ and $j=2$. The $k$-simplex $K$
    contains the flower $\mathcal{F}(K,C)$ with centre $C =
    \{c_1,c_2\}$ and petals $P_i=C \cup \{w_i\}$, for $i=0,1,2,3$.
    Each petal $P_i$ is contained in no other $k$-simplex except $K$.}
  \label{fig:Mjmin}
\end{figure}
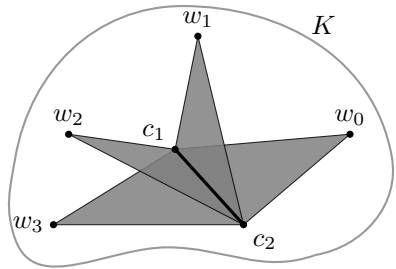
  
In other words, a copy of $\Mjmin$ can be viewed as a copy of $\Mj$
without the condition~\ref{Mj:cycle}, i.e.\ without the $j$-cycle $J$
containing one of the petals (see Figures~\ref{fig:mj}
and~\ref{fig:Mjmin}). Therefore,
\begin{equation*} 
  \Mjmin \not\subset \cG_p \hspace{3ex} \Rightarrow \hspace{3ex}
  \Mj\not\subset \cG_p.
\end{equation*}
Moreover, we will show (Lemma~\ref{lem:jshells}) that, for $p$
approaching the value $\pj$, the $j$-cycle $J$ needed to extend a copy
of $\Mjmin$ to a copy of $\Mj$ is very likely to exist. Hence in this
range the existence of $\Mjmin$ and $\Mj$ are essentially equivalent
events.

Let us estimate the expected number of copies of $\Mjmin$ in $\cG_p$.
The probability of $k+1$ arbitrary vertices with a fixed centre $C$
forming a copy of $\Mjmin$ is about $p(1-p)^{(k-j+1) \binom{n}{k-j}}$,
which we can approximate by 
\begin{equation*}
  pe^{-\frac{(k-j+1)n^{k-j}}{(k-j)!}p},
\end{equation*}
so the expected number of copies of $\Mjmin$ is of order
$n^{k+1}pe^{-\frac{(k-j+1)n^{k-j}}{(k-j)!}p}$. We seek $p$ such that
\begin{equation*}
  n^{k+1}pe^{-\frac{(k-j+1)n^{k-j}}{(k-j)!}p}=1.
\end{equation*}
This holds when
\begin{equation*}
  (k+1)\log n + \log p -\frac{(k-j+1)n^{k-j}}{(k-j)!}p = 0,
\end{equation*}
which implies
\begin{align*}
  p & = \frac{(k+1)\log n+\log p}{(k-j+1)n^{k-j}}(k-j)!\\
  & = \frac{(k+1)\log n+\log\left(\frac{(k+1)\log n+\log p}{k-j+1}(k-j)!\right) - (k-j) \log n}{(k-j+1)n^{k-j}} (k-j)!\\
  & = \frac{(j+1)\log n + \log\log n + O(1)}{(k-j+1) n^{k-j}}(k-j)!,
\end{align*}
which corresponds to the stated threshold $\pj$ defined
in~\eqref{eq:pj}.

\subsection{Outline of the proofs} \label{sec:intuition:outline}

We now give an outline of the proofs of our main theorems. Let us
begin with Theorem~\ref{thm:gentheor}. To analyse the zero-th
cohomology group, we define the probabilities
\begin{itemize}
\item $p_0:= \frac{\log n}{n^k}k!$;
\item $p_T:= \sup \{p\in[0,1] \mid \cG_p \mbox{ is not topologically
  connected}\}$. 
\end{itemize}
In other words, $p_T$ is the birth time of the $k$-simplex whose
appearance causes the complex $\cG_p$ to become topologically
connected. Recall that topological 
connectedness is equivalent to the random hypergraph
$G_p$ becoming vertex-connected. It is known (see
e.g.~\cite{CooleyKangKoch16,Poole14Hamilton,Poole15}) 
that $p_0$ is the threshold for
vertex-connectedness of the random $(k+1)$-uniform hypergraph, that is
whp $p_T = (1+o(1))p_0$ (Lemma~\ref{lem:topconn}). 

Recall from~\eqref{eq:pj} and~\eqref{eq:pMj} that for each $j\in[k-1]$
we have
\begin{itemize}
\item $\displaystyle \pj =
  \frac{(j+1)\log n + \log\log n}{(k-j+1) n^{k-j}}(k-j)!$;
\item $\pMj =
  \sup\{p\in[0,1] \mid \cG_p\mbox{ contains a copy of }\Mj\}$.
\end{itemize}
In other words, $\pMj$ is the birth time of the $k$-simplex whose
appearance causes the last copy of $M_{j}$ to disappear.

In Section~\ref{sec:subcrit}, we study the subcritical case when
$p<\pMj$, providing results needed for the proof of
Theorem~\ref{thm:gentheor}~\ref{thm:gentheor:subcrit}. Moreover, we
show that whp the value of $\pMj$ is ``close'' to $\pj$
(Corollary~\ref{cor:pMj}), thus proving
Theorem~\ref{thm:gentheor}~\ref{thm:gentheor:pMj}. 

In order to prove
Theorem~\ref{thm:gentheor}~\ref{thm:gentheor:subcrit}, we aim to show
that whp $H^j(\cG_p;\FF_2) \neq 0$ throughout the interval
$[\pMjmin,\pMj)$. A direct argument based on determining the
dimensions of $C^{j-1}(\cG_p)$, $C^{j}(\cG_p)$ and $C^{j+1}(\cG_p)$
may be considered, but it would work only for some values of $j$ and
some ranges of $p$ (see Section~\ref{sec:concremarks:dim}). We
actually prove a stronger result (Lemma~\ref{lem:phantomlemma}), for
which we define the following probabilities: for each $j\in[k-1]$, set
\begin{itemize}
\item $\displaystyle \pjmin :=
  \left(1-\frac{1}{\sqrt{\log n}}\right)\frac{(j+1)\log n}{(k-j+1)n^{k-j}}(k-j)!$;
\item $\displaystyle \pjone :=
  \frac{1}{10(j+1)\binom{k+1}{j+1}n^{k-j}}$.
\end{itemize}
We will also need the value 
\begin{equation*}
  p_0^- := \displaystyle \frac{\log n}{n^k} = \frac{p_0}{k!}.
\end{equation*} 
The motivation behind these seemingly arbitrary definitions will
become clear as the argument develops. We will prove that three copies
of $\Mj$ suffice to cover the interval $[\pjmom, \pMj)$, which whp
contains the interval $[\pMjmin,\pMj)$ by
Theorem~\ref{thm:gentheor}~\ref{thm:gentheor:pMj}.

\begin{lem}\label{lem:phantomlemma}
  Let $j\in[k-1]$. With high probability, there exist three triples
  $(K_\ell,C_\ell,J_\ell)$, $\ell=1,2,3$, such that for all
  $p\in[\pjmom,\pMj)$, $(K_\ell,C_\ell,J_\ell)$ forms a copy of $\Mj$
  in $\cG_p$ for some $\ell$. In particular, whp $H^j(\cG_p;\FF_2) \ne
  0$ for all $p \in [\pjmom,\pMj)$.
\end{lem}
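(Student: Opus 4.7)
The plan is to identify three triples $(K_\ell,C_\ell,J_\ell)$, $\ell=1,2,3$, whose lifetime intervals as copies of $\Mj$ collectively cover the window $[\pjmom,\pMj)$. A triple $(K,C,J)$ is a copy of $\Mj$ in $\cG_p$ exactly when (i) $K\in\cG_p$, (ii) no $k$-simplex other than $K$ that contains a petal of $\mathcal{F}(K,C)$ lies in $\cG_p$, and (iii) every $j$-simplex of $J$ lies in $\cG_p$. Conditions (i) and (iii) are monotone increasing in $p$ while (ii) is monotone decreasing, so the set of $p$ for which a fixed triple is a copy forms an interval, and covering $[a,b)$ reduces to verifying (i) and (iii) at $p=a$ together with (ii) at every $p<b$.

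For the upper end I would take $(K_3,C_3,J_3)$ to be a triple witnessing the supremum in the definition of $\pMj$, i.e.\ a copy of $\Mj$ in $\cG_p$ for $p$ arbitrarily close to $\pMj$ from below. For such a witness condition (ii) holds on all of $[0,\pMj)$, so its lifetime equals $[b_3,\pMj)$, where $b_3$ is the largest of the birth time of $K_3$ and of the birth times of the $k$-simplices needed to cover each remaining $j$-simplex of $J_3$. If $b_3\le\pjmom$ the single triple already covers the window and the other two may be set equal to it. Otherwise I would exploit the large supply of $\Mjmin$-copies in the lower part of the window: a first-moment estimate in the spirit of Section~\ref{sec:intuition:heur}, evaluated at $p=\pjmom=p_{j-1}^-$ (much smaller than $p_j$), yields superpolynomially many $\Mjmin$-copies in $\cG_{\pjmom}$. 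Combining this abundance with the quantitative control of $\pMj$ from Corollary~\ref{cor:pMj} and with the second-moment estimates of Section~\ref{sec:subcrit}, I would produce a pair $(K_1,C_1)$ that is a copy of $\Mjmin$ throughout an interval $[\pjmom,t_1)$ with $t_1$ as close to $b_3$ as possible, and then invoke Lemma~\ref{lem:jshells} to pick a $j$-cycle $J_1\subseteq\cG_{\pjmom}$ meeting $\mathcal{F}(K_1,C_1)$ in exactly one petal. A third triple $(K_2,C_2,J_2)$, obtained by the same construction at an intermediate time, is used to bridge any residual gap between $t_1$ and $b_3$.

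The main obstacle is the simultaneous existence whp of the three triples, and in particular of a pair $(K_1,C_1)$ whose petal-isolation condition survives deep into the window: the lifetime of a ``typical'' $\Mjmin$-pair is much shorter than $\pMj-\pjmom$, so one has to sift through the large family of candidates in $\cG_{\pjmom}$ and control the joint tail behaviour of competitor arrival times. This is the core subcritical computation and explains why three triples are required rather than one. Non-monotonicity of being a copy of $\Mj$ forbids a union bound over the continuum $p\in[\pjmom,\pMj)$, and the single-triple strategy can fail in two essentially independent ways---late birth of $K$ and incomplete $j$-cycle---whence the two extra triples, one repairing each failure mode in the above construction. The final ``in particular'' conclusion then follows because each copy of $\Mj$ induces a bad function witnessing $H^j(\cG_p;\FF_2)\ne0$, as recalled in Section~\ref{sec:intuition:minob}.
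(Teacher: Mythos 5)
Your high-level architecture matches the paper's: three triples, one per subinterval, with the observation that each triple's lifetime is an interval and that copies of $\Mjmin$ are upgraded to copies of $\Mj$ via Lemma~\ref{lem:jshells}. However, there are genuine gaps. The most concrete one is your use of Lemma~\ref{lem:jshells} at $p=\pjmom$: that lemma is proved at $p=\pjone=\Theta(n^{-(k-j)})$, which is far larger than $\pjmom=\Theta(\log n\cdot n^{-(k-j+1)})$. At $\pjmom$ the expected number of $j$-shells on a fixed base is $\Theta\bigl(n\,q^{j+1}\bigr)$ with $q=\Theta(\log n/n)$, i.e.\ $o(1)$ for $j\ge1$, so whp a typical copy of $\Mjmin$ at $\pjmom$ does \emph{not} extend to a copy of $\Mj$ at all, and your $J_1$ need not exist. (Relatedly, the count of $\Mjmin$-copies at $\pjmom$ is $\Theta(n^j\log n)$, polynomial rather than superpolynomial, and only $\Theta((\log n)^{j+2})$ of them extend to copies of $\Mjst$.) The paper handles this by counting copies of $\Mjst$ directly at $\pjmom$ via a second-moment argument (Lemma~\ref{lem:secondmoment}) and only switching to $\Mjmin$-counting once $p\ge\pjone$, where Lemma~\ref{lem:jshells} genuinely applies.

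The second gap is that the survival arguments --- the heart of the lemma --- are announced but not carried out. You explicitly defer ``the core subcritical computation,'' namely showing that some copy born by $\pjmom$ survives deep into the window and that the gaps between your floating endpoints $t_1$, the intermediate time, and $b_3$ actually close. The paper fixes the cut points $\pjone$ and $\pjmin$ in advance and proves three separate survival statements: a Chernoff bound on the number of ``dangerous'' $(k+1)$-sets shows one $\Mjst$ survives $[\pjmom,\pjone]$ (Lemma~\ref{lem:pj-1pjone}); a uniform-birth-time argument applied to the $\Theta(n^{(j+1)/\sqrt{\log n}}\log n)$ independent copies present at $\pjmin$ shows one already existed at $\pjone$ (Lemma~\ref{lem:pj1pjmin}); and the ratio $\pjmin/\pMj=1-o(1)$ shows the last copy to disappear already existed at $\pjmin$ (Lemma~\ref{lem:pjmin-pMj}). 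Your backward construction from a witness of the supremum $\pMj$ is a reasonable reformulation of the third step, but without quantitative control of $b_3$ and $t_1$ the covering claim is not established. As written, the proposal is a correct diagnosis of the difficulty rather than a proof.
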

\noindent
This will in particular imply that whp $\cG_p$ is not \connected\ in
the interval $[\pjmom,\pMj)$. 
By Lemma~\ref{lem:phantomlemma} applied
with $j$ replaced by $i$ for each $i \in [j]$ and by the fact that
$\cG_p$ is not topologically connected in $[0,p_T)$ by definition, whp
$\cG_p$ is not \connected\ in the range
\begin{equation*}
 [0,p_T) \cup \bigcup\limits_{i=1}^{j} [p_{i-1}^-,p_{M_i}) \quad
 \stackrel{\mbox{\footnotesize (whp)}}{=} \quad [0,\pMj).
\end{equation*}
This completely covers the subcritical case
(Theorem~\ref{thm:gentheor}~\ref{thm:gentheor:subcrit}). 

In order to prove Lemma~\ref{lem:phantomlemma}, we divide the interval
$[\pjmom,\pMj)$  into smaller subintervals 
\begin{equation*}
  [\pjmom,\pMj) = [\pjmom, \pjone] \cup [\pjone,\pjmin] \cup
  [\pjmin,\pMj)
\end{equation*}
and show that for each of these subintervals, whp there is one copy of
$\Mj$ which exists in $\cG_p$ throughout this interval, using the
following strategy.

\renewcommand{\theenumi}{(\Roman{enumi})}
\begin{enumerate}
\item At around $\pjmom$, whp there exist ``many'' copies of $\Mj$
  (Lemma~\ref{lem:secondmoment}) and whp at least one of these 
  survives until probability $\pjone$ (Lemma~\ref{lem:pj-1pjone}).
\item For any $p \geq \pjone$, whp all copies of $\Mjmin$ give rise
  to copies of $\Mj$, thus the existence of $\Mjmin$ and $\Mj$ are
  essentially equivalent events (Lemma~\ref{lem:jshells}). In
  particular, the last $\Mj$ to disappear corresponds to the last
  $\Mjmin$ (Corollary~\ref{cor:noMjmin}).
\item At around $\pjmin$, whp there are ``many'' copies of $\Mjmin$
  (Lemma~\ref{lem:meanconc}) and whp one of these already existed at
  $\pjone$ (Lemma~\ref{lem:pj1pjmin}).
\item The last $\Mjmin$ to disappear whp already existed at $\pjmin$
  (Lemma~\ref{lem:pjmin-pMj}). 
\end{enumerate}
\renewcommand{\theenumi}{(\roman{enumi})}

In Section~\ref{sec:supcrit} we study the supercritical case, i.e.\
the case $p \geq \pMj$, and derive auxiliary results, necessary to
prove Theorem~\ref{thm:gentheor}~\ref{thm:gentheor:supercrit}. By the
definition of $\pMj$, we know that $\cG_p$  contains no $\Mj$ in this
range, so by Lemma~\ref{lem:minobst} it remains to show that whp there
are no bad functions with support of size $s>k-j+1$. In other words,
we need to prove that each $j$-cocycle with support of size $s$ is
also a $j$-coboundary.

To this end, we prove (Corollaries~\ref{cor:noSpsmallp}~and~\ref{cor:supercritical}) that from
slightly before the threshold $\pj$ onwards, every $j$-cocycle can be written
as the sum of functions arising from copies of $\Mjmin$ (see
Definition~\ref{def:generating}). We first show
(Lemma~\ref{lem:traversable}) that the support of any smallest
$j$-cocycle \emph{not} generated by copies of $\Mjmin$ satisfies a property
which we call \emph{traversability}
(Definition~\ref{def:traversability}). We then bound the probability
that such a support of size $s$ exists. For constant $s$, simple
bounds will suffice (Lemma~\ref{lem:smallsupport}); for larger values
of $s$, traversability will allow us to define a breadth-first search
process that we use to track the construction of a traversable support
and thus count the number of such supports much more accurately
(Lemma~\ref{lem:largesupport}). 

Combining the results from Sections~\ref{sec:subcrit}
and~\ref{sec:supcrit}, we prove Theorem~\ref{thm:gentheor} in
Section~\ref{sec:proofs}. We then apply Theorem~\ref{thm:gentheor} to
derive Corollary~\ref{cor:hittingtimeYp}, which provides a hitting
time result for $\cY_p$, relating the vanishing of
$H^{k-1}(\cY_p;\FF_2)$ to the disappearance of the last isolated
$(k-1)$-simplex. 

Finally, we prove Theorem~\ref{thm:critwindow1} in
Section~\ref{sec:proofs:critwind}. We analyse \formalconnectedness\ of
$\cG_p$ within the critical window given by the threshold for this
property, i.e.\ we consider
$p=\frac{(j+1)\log n+\log\log n+O(1)}{(k-j+1)n^{k-j}}(k-j)!$. In this
range, whp all $j$-cocycles arise from copies of $\Mjmin$
(Corollary~\ref{cor:noSpsmallp}).
Using the method of moments
(Lemma~\ref{lem:metmoments}), we will show that the number of
copies of $\Mjmin$ converges in distribution to a Poisson random
variable and that whp this number equals the dimension of the $j$-th
cohomology group of $\cG_p$. Thus, in particular we derive an explicit
expression for the limiting probability of $\cG_p$ being \connected.

\section{Subcritical regime} \label{sec:subcrit}

In this section we study the subcritical case $p<\pMj$ and derive the
necessary results for the proofs of statements~\ref{thm:gentheor:pMj}
and~\ref{thm:gentheor:subcrit} of Theorem~\ref{thm:gentheor}.

\subsection{Topological connectedness}\label{sec:topconn}

We begin with a result stating that 
\begin{equation*}
  p_0 = \frac{\log n}{n^k}k!
\end{equation*}
is a sharp threshold for topological connectedness of $\cG_p$.
Recall that $p_T$ is the birth time of the $k$-simplex whose
appearance causes the complex $\cG_p$ to become topologically
connected.

\begin{lem}\label{lem:topconn}
  Let $\omega$ be any function of $n$ which tends to infinity as
  $n\to\infty$. Then with high probability
  \begin{equation*}
    \frac{\log n - \omega}{n^k}k! < p_T <\frac{\log n + \omega}{n^k}k!
  \end{equation*}
  and thus in particular $p_T > p_0^-$.
\end{lem}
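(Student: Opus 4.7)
The plan is to reduce the lemma to the (known) sharp threshold for vertex-connectedness of the underlying random $(k+1)$-uniform hypergraph $G_p=G(k;n,p)$. The reduction is immediate from Definition~\ref{def:complexGp}: every hyperedge of $G_p$ becomes a clique on $k+1$ vertices in the $1$-skeleton of $\cG_p$, and no $1$-simplex of $\cG_p$ arises otherwise. Hence two vertices lie in the same path-connected component of $\cG_p$ if and only if they lie in the same vertex-connected component of $G_p$, i.e.\ $H^0(\cG_p;\FF_2)\cong\FF_2$ iff $G_p$ is vertex-connected. In particular, $p_T$ is almost surely equal to the hitting time at which $G_p$ first becomes vertex-connected.

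Once this reduction is in place, the bulk of the lemma is the classical sharp threshold for vertex-connectedness of $G(k;n,p)$ at $p_0=k!\log n/n^k$, cited in the text as \cite{CooleyKangKoch16,Poole14Hamilton,Poole15}. Setting $p^{\pm}:=(\log n\pm\omega)k!/n^k$, that result asserts that $G_{p^-}$ is disconnected whp while $G_{p^+}$ is connected whp. Since vertex-connectedness of $G_p$ is monotone increasing in $p$, and since $p^{\pm}$ are almost surely not birth times of any $k$-simplex, these two statements translate directly into $p^-<p_T<p^+$ whp, which is the desired two-sided bound.

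Should one prefer a self-contained proof, both bounds reduce to a first/second moment computation on the number $X$ of isolated vertices of $G_p$. One has $\EE X=n(1-p)^{\binom{n-1}{k}}$, which at $p=p^{\pm}$ simplifies to $(1+o(1))e^{\mp\omega}$. For the upper bound one combines Markov with the standard union bound over potential vertex-cuts of size $2\le s\le n/2$, exactly as in the graph case. For the lower bound, a routine covariance calculation, using that two distinct vertices lie together in only $\binom{n-2}{k-1}=O(n^{k-1})$ of the $\Theta(n^k)$ $(k+1)$-sets containing each of them, yields $\var X=o((\EE X)^2)$, so Chebyshev gives $X>0$ whp. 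The only mild obstacle here is to handle the union bound over $p$ in the birth-time interpretation of Section~\ref{sec:preliminaries:birth} so that the inequalities hold simultaneously at $p_T$, but this is trivial by monotonicity of vertex-connectedness.

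Finally, the in-particular statement $p_T>p_0^-$ is an immediate corollary: taking any $\omega=o(\log n)$ in the lower bound just proved yields $p_T>(1-o(1))k!\log n/n^k$ whp, and since $k\ge 2$ implies $k!\ge 2$, this exceeds $p_0^-=\log n/n^k$ for $n$ large. The only substantive work is the hypergraph vertex-connectedness threshold; everything else, including the translation from ``$G_{p^\pm}$ has the right connectedness status whp'' to ``$p_T\in(p^-,p^+)$ whp'', is routine.
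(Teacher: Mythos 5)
Your proposal is correct and follows essentially the same route as the paper: the paper likewise observes that topological connectedness of $\cG_p$ is equivalent to vertex-connectedness of the underlying random $(k+1)$-uniform hypergraph, cites the known sharp threshold from \cite{CooleyKangKoch16,Poole15} (noting it is a standard first/second moment argument generalising the graph case), and deduces the two-sided bound on $p_T$. Your additional remarks on converting the threshold statement into the hitting-time bound via monotonicity, and on the isolated-vertex computation, are exactly the routine details the paper leaves implicit.
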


Observe that Lemma~\ref{lem:topconn} is equivalent to $p_0$ being a
sharp threshold for vertex-connectedness of the random $(k+1)$-uniform
hypergraph, which follows for instance from~\cite{CooleyKangKoch16}
or~\cite{Poole15} as a special case of each (see
also~\cite{Poole14Hamilton} for a stronger result). The proof relies
on standard applications of the first and second moment methods and is
an easy generalisation of the graph case (see
e.g.~\cite{KaronskiLuczak96}).

\subsection{Counting obstructions}

In this section we provide several results concerning the number of
minimal obstructions that exist in $\cG_p$ whp. First we define a
special case of $\Mj$ (Definition~\ref{def:mjstar}), which will be
useful in the subsequent arguments.

\begin{definition} \label{def:shell}
  For any $(j+2)$-set A in a complex $\cG$, the collection of all
  $(j+1)$-subsets of $A$ is called a \emph{j-shell} if each of them
  forms a $j$-simplex in $\cG$. The $j$-shell is called \emph{hollow}
  if $A$ does not form a $(j+1)$-simplex in $\cG$. 
\end{definition}
If the collection of all $(j+1)$-subsets of a $(j+2)$-set $A$ forms a
$j$-shell, with a slight abuse of terminology we also refer to the set
$A$ itself as a $j$-shell.

\begin{definition} \label{def:mjstar}
  Given a $k$-complex $\cG$ on vertex set $[n]$, a $(k+1)$-set $K$ in
  $\cG$, a $j$-set $C \subseteq K$, and two vertices $w\in K\setminus
  C$ and $a\in[n]\setminus K$, we say that the $4$-tuple $(K,C,w,a)$
  forms a \emph{copy of $\Mjst$} (see Figure~\ref{fig:mjstar}) if
  \renewcommand{\theenumi}{(M\arabic{enumi})}
  \begin{enumerate}    
  \item\label{Mjst:simplex}
    $K$ is a $k$-simplex in $\cG$;
  \item\label{Mjst:flower}
   $C$ is a $(j-1)$-simplex in $K$ such that each petal of the flower
    $\mathcal{F}(K,C)$ is contained in \emph{no other}
    $k$-simplex of $\cG$;
    \renewcommand{\theenumi}{(M\arabic{enumi}*)}
  \item\label{Mjst:shell}
    $C \cup \{w\} \cup \{a\}$ is a $j$-shell in $\cG$.
  \end{enumerate}
  \renewcommand{\theenumi}{(\roman{enumi})}
  Recall that~\ref{Mjst:simplex} and~\ref{Mjst:flower} mean that
  $(K,C)$ forms a copy of $\Mjmin$ (see Definition~\ref{def:mjminus}).
  We call the $j$-simplex $C \cup \{w\}$ the \emph{base} and $a$ the
  \emph{apex vertex} of the $j$-shell $C\cup \{w\} \cup \{a\}$. Every
  other $j$-simplex in $C\cup\{w\}\cup\{a\}$ is called a \emph{side}
  of the $j$-shell.
\end{definition}

\begin{figure}[htbp]
\centering
  \begin{tikzpicture}[scale=1.0]
    \filldraw [opacity=0.85,fill=gray] (1.8,2.7) -- (4.4,1.4) --
      (2.8,1.8) -- cycle;
    \filldraw [opacity=0.85,fill=gray] (1.8,2.7) -- (4.1,3) --
      (2.8,1.8) -- cycle;
    \filldraw [opacity=0.85,fill=gray] (1.8,2.7) -- (2.5,4.3) --
      (2.8,1.8) -- cycle;
    \draw[fill,opacity=0.85,pattern=north west lines,pattern color=gray]
      (1.8,2.7) -- (0.6,2) node [above] {$w$} -- (2.8,1.8) -- cycle;

    \filldraw [opacity=0.25,fill=gray] (0.6,2) -- (1.5,0.3) --
      (2.8,1.8) -- cycle;
    \draw[dashed,opacity=0.5] (1.5,0.3)  -- (1.8,2.7);

    \draw[very thick] (1.8,2.7) -- (2.8,1.8);
    \draw[fill] (1.8,2.7) circle [radius=0.04] node [above left]
      {$c_1$};
    \draw[fill] (2.8,1.8) circle [radius=0.04] node [below right]
      {$c_2$};

    \draw[fill] (4.4,1.4) circle [radius=0.04];
    \draw[fill] (4.1,3) circle [radius=0.04];
    \draw[fill] (2.5,4.3) circle [radius=0.04];
    \draw[fill] (0.6,2) circle [radius=0.04];
    \draw[fill] (1.5,0.3) circle [radius=0.04]  node [below] {$a$};

    \draw[thick,opacity=0.4] (0.3,3.0) to [out=80,in=180] (2.5,4.8);
    \draw[thick,opacity=0.4] (2.5,4.8) to [out=0,in=100] (5.2,2.5);
    \draw[thick,opacity=0.4] (5.2,2.5) to [out=280,in=20] (4.25,1.15);
    \draw[thick,opacity=0.4] (4.25,1.15) to [out=190,in=0] (2.0,1.5);
    \draw[thick,opacity=0.4] (2.0,1.5) to [out=180,in=290] (0.35,2.0);
    \draw[thick,opacity=0.4] (0.35,2.0) to [out=110,in=260] (0.3,3.0);
    \node at (4.2,4.5) {$K$};
  \end{tikzpicture}
  \caption{A copy of $M_j^*$, for $k=5$ and $j=2$. The pair $(K,C)$,
    with $K$ a $k$-simplex and $C=\{c_1,c_2\}$, forms a copy of
    $M_j^-$. The $(j+2)$-set $C\cup\{w\}\cup\{a\}$ is a $j$-shell with
    base $C\cup \{w\}$ and apex vertex $a$.}
  \label{fig:mjstar}
\end{figure}
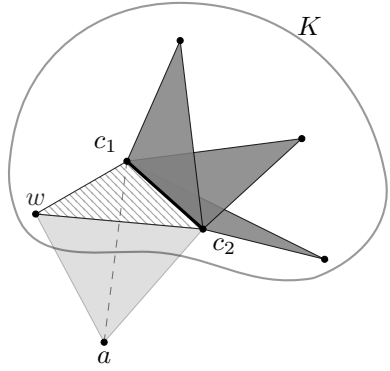

Observe that given a $4$-tuple $(K,C,w,a)$ which forms a copy of
$\Mjst$ in $\cG_p$, the $j$-shell $C \cup \{w\} \cup \{a\}$ is hollow
by \ref{Mjmin:flower} and the fact that
every $(j+1)$-simplex in $\cG_p$ is contained in a $k$-simplex.
Moreover, since the $j$-simplices of a $j$-shell form a $j$-cycle, a
copy of $\Mjst$ is in particular a copy of $\Mj$
(Definition~\ref{def:mj}). Therefore, the following
implications hold.
\begin{equation} \label{eq:Mj*implies}
  \Mjst \subset \cG_p \hspace{2ex} \Rightarrow \hspace{2ex}\Mj \subset
  \cG_p \hspace{2ex} \Rightarrow \hspace{2ex} \Mjmin \subset \cG_p.
\end{equation}
We will see later (Lemma~\ref{lem:jshells}) that for ``large'' $p$,
whp every copy of $\Mjmin$ is  extendable to several copies of $\Mjst$.
Therefore, the existence of copies of $\Mjmin$, $\Mjst$ and $\Mj$ in
$\cG_p$ are essentially equivalent events in that range.

Define $\varMjst$ to be the number of copies of $\Mjst$ in $\cG_p$.
We need a general expression for its expectation for certain possible
values of the probability $p$. To this end, consider the family
$\mathcal{T}^*$ of $4$-tuples $T^*=(K,C,w,a)$, where $K \subseteq[n]$
with $|K|=k+1$, where $C$ is a $j$-subset of $K$, where $w\in
K\setminus C$, and where $a \in [n] \setminus K$. Each of these tuples
may form a copy of $\Mjst$ with $K$ as $k$-simplex, $C$ as the centre,
and $C \cup \{w\} \cup \{a\}$ as the $j$-shell with base $C\cup\{w\}$
and apex vertex $a$. For each such tuple $T^*$, let $X_{T^*}$ be the
indicator random variable of the event that $T^*$ forms a copy of $\Mj$.

We next show that at probability $\pjmom$ the number of copies of
$\Mjst$ is concentrated around its expectation, whose order we also
determine.

\begin{lem}\label{lem:secondmoment}
  If $p=\pjmom$, then $\EE(\varMjst) =
  \Theta\left((\log n)^{j+2}\right)$. Furthermore, with high
  probability $\varMjst= (1 + o(1)) \EE(\varMjst)$.
\end{lem}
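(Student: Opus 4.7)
The plan is to establish both parts by direct computation: first, to evaluate $\EE(\varMjst)$ by summing the contributions of each 4-tuple, and second, to apply Chebyshev's inequality via a second moment bound. The key observation driving the whole argument is that the three defining conditions of $\Mjst$ are governed by essentially disjoint collections of $(k+1)$-sets, so the corresponding probabilities factorise up to a $(1+o(1))$ correction.

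For the expectation, I would enumerate the $\Theta(n^{k+2})$ tuples $T^* = (K,C,w,a) \in \mathcal{T}^*$ and decompose $\{X_{T^*}=1\}$ into three events: (i) $K$ is a $k$-simplex, of probability $p$; (ii) no $(k+1)$-set other than $K$ containing one of the $k-j+1$ petals $C \cup \{w'\}$ (with $w' \in K \setminus C$) is present; (iii) each of the $j+1$ sides of the shell (those $(j+1)$-subsets of $C\cup\{w\}\cup\{a\}$ that contain $a$) is contained in some present $(k+1)$-set. At $p=\pjmom=\Theta(\log n/n^{k-j+1})$, the $\Theta(n^{k-j})$ forbidden sets in (ii) give $\Pr(\text{(ii)})=\exp(-\Theta(\log n/n))=1-o(1)$. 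In (iii), each side is contained in $\binom{n-j-1}{k-j}\sim n^{k-j}/(k-j)!$ potential $(k+1)$-sets, giving per-side probability $\Theta(\log n/n)$; distinct sides share only $O(n^{k-j-1})$ $(k+1)$-sets, a factor $n$ smaller, so the $j+1$ side events are nearly independent and $\Pr(\text{(iii)})=\Theta((\log n/n)^{j+1})$. Observe also that the forbidden sets in (ii) and the desired sets in (iii) overlap in only $O(n^{k-j-1})$ $(k+1)$-sets, confirming the factorisation. Multiplying, each tuple contributes $\Theta((\log n)^{j+2}/n^{k+2})$, and summing yields $\EE(\varMjst)=\Theta((\log n)^{j+2})$.

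For concentration it suffices to show $\EE(\varMjst^2)=(1+o(1))\EE(\varMjst)^2$, after which Chebyshev's inequality delivers $\varMjst=(1+o(1))\EE(\varMjst)$ whp. The double sum $\sum_{T_1^*,T_2^*}\Pr(X_{T_1^*}=X_{T_2^*}=1)$ splits into pairs whose three constituent conditions share no $(k+1)$-set (contributing $(1+o(1))\EE(\varMjst)^2$ by the factorisation above) and overlapping pairs, which I would classify by $s=|K_1\cap K_2|$ and by the roles of $a_1,a_2$ relative to the other tuple. Each shared vertex reduces the tuple count by a factor of $n$, while the joint probability is altered only in a controlled way: the flower conditions remain essentially unrestrictive, and shared $(k+1)$-sets between the two shells boost the probability by at most $O(1/n)$ per pair of shared sides. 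The extreme case $K_1=K_2$ contributes at most $\Theta(n^{k+3})\cdot p\cdot(\log n/n)^{2(j+1)}=\Theta((\log n)^{2j+3}/n^j)$, which for $j\ge 1$ is negligible compared with $\EE(\varMjst)^2=\Theta((\log n)^{2j+4})$; all other overlap patterns (including partial overlaps of $K_1$ and $K_2$, coincidences among $a_1,a_2$ and the vertices of the opposite $k$-simplex, or shared centres) give strictly smaller contributions by the same accounting. The diagonal $T_1^*=T_2^*$ contributes $\EE(\varMjst)=\Theta((\log n)^{j+2})=o(\EE(\varMjst)^2)$.

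The main obstacle is the bookkeeping in the second moment bound: systematically enumerating all overlap patterns of $(K_1,C_1,w_1,a_1)$ and $(K_2,C_2,w_2,a_2)$ and confirming that each contributes $o(\EE(\varMjst)^2)$. The computation is routine but the numerology is delicate; it works because the polylogarithmic growth of $\EE(\varMjst)$, stemming from the choice $p=\pjmom$ just below the threshold for $M_{j-1}$, leaves ample room to absorb the relative savings arising from structural overlap.
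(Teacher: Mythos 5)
Your proposal is correct and follows essentially the same route as the paper: the same factorisation of the per-tuple probability into the $k$-simplex event (probability $p$), the flower condition (probability $1-o(1)$), and the $j+1$ nearly independent side events (probability $(1+o(1))q^{j+1}$ with $q=\Theta(\log n/n)$), multiplied by the $\Theta(n^{k+2})$ tuples, followed by a second-moment computation classified by the overlap of the two tuples and Chebyshev's inequality. The one overlap case you compute explicitly ($K_1=K_2$, $a_1\neq a_2$) agrees with the paper's, and the remaining enumeration you defer as routine is exactly the case analysis the paper carries out (indexed by $s$ and $i=|(K_1\cup\{a_1\})\cap(K_2\cup\{a_2\})|$), with the dominant term coming from disjoint tuples as you say.
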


\begin{proof}
  Let $T^* = (K,C,w,a) \in \mathcal{T}^*$ be a fixed $4$-tuple. Recall
  that $T^*$ forms a copy of $\Mjst$ in $\cG_p$ if
  conditions~\ref{Mjst:simplex},~\ref{Mj:flower},
  and~\ref{Mjst:shell} of Definition~\ref{def:mjstar} hold.

  Clearly,~\ref{Mjst:simplex} holds with probability $p$. In order to
  determine the probability that~\ref{Mjst:flower} holds, consider a
  fixed petal. The probability that this petal lies in no other
  $k$-simplex is
  \begin{equation}\label{eq:r}
    r = r(p,n,k,j) := (1-p)^{\binom{n-j-1}{k-j}-1}. 
  \end{equation}
  For $p=\pjmom = \Theta\left(\frac{\log n}{n^{k-j+1}}\right)$, we
  have
  \begin{equation*} 
    r \ge 1 - \binom{n-j-1}{k-j}p = 1-o(1),
  \end{equation*}
  and thus each petal lies in no other $k$-simplices whp. Therefore,
  taking a union bound,~\ref{Mjst:flower} holds with probability at
  least $1-(k-j+1)(1-r) =1-o(1)$.

  Now consider~\ref{Mjst:shell}, conditioned on the event
  that both~\ref{Mjst:simplex} and~\ref{Mjst:flower} hold. The base 
  $C\cup\{w\}$ of $C\cup\{w\}\cup\{a\}$ already lies in $K$, 
  so it remains to prove
  that all other $(j+1)$-sets in $C\cup\{w\}\cup\{a\}$, i.e.\ the
  sides of this (potential) $j$-shell, are $j$-simplices in $\cG_p$.
  Denote the sides of $C\cup\{w\}\cup\{a\}$ by $L_1,\dotsc,L_{j+1}$.
  The number of $(k+1)$-sets containing $L_i$ is $\binom{n-j-1}{k-j}$,
  but some of these $(k+1)$-sets might not be allowed to be
  $k$-simplices because they contain a petal of the flower $\mathcal{F}(K,C)$
  (see~\eqref{eq:flowerinK}). However, the number of $(k+1)$-sets for
  which this is the case is $O(n^{k-j-1})$. All other $(k+1)$-sets
  meet $C\cup\{w\}\cup\{a\}$ only in $L_i$, which in particular
  implies that the events that $L_1,\dotsc,L_{j+1}$ lie in
  $k$-simplices (conditional on~\ref{Mjst:simplex}
  and~\ref{Mjst:flower}) are independent. Thus, each $L_i$ lies in a
  $k$-simplex independently with probability
  \begin{equation}\label{eq:side}
    1-(1-p)^{\binom{n-j-1}{k-j}+O(n^{k-j-1})} 
    = (1+o(1))q,
  \end{equation}
  where
  \begin{equation}\label{eq:q}
    q := \frac{pn^{k-j}}{(k-j)!} =
    \Theta\left(\frac{\log n}{n}\right).
  \end{equation}
  Therefore, conditional on~\ref{Mjst:simplex} and~\ref{Mjst:flower}
  holding,~\ref{Mjst:shell} holds with probability $(1+o(1))q^{j+1}$.
  The probability that $T^*$ forms a copy of $\Mjst$ is thus
  \begin{equation*}
    (1+o(1))pq^{j+1}.
  \end{equation*}

  The number of $4$-tuples $(K,C,w,a) \in \mathcal{T}^*$ is
  \begin{equation*}
    \binom{n}{k+1}\binom{k+1}{j}(k-j+1)(n-k-1) =
    (1+o(1))\frac{n^{k+2}}{j!(k-j)!}
  \end{equation*}
  and thus we have
  \begin{equation}\label{eq:expec}
    \EE(\varMjst) = (1+o(1))\frac{pq^{j+1}n^{k+2}}{j!(k-j)!} =
    \Theta\left((\log n)^{j+2}\right),
  \end{equation}
  as required.

  In order to prove the second statement of the lemma, we will show
  that $\EE(\varMjst^2) = (1+o(1))\EE(\varMjst)^2$ and then apply 
  Chebyshev's inequality. We have
  \begin{equation*}
    \EE(\varMjst^2) = 
    \sum_{T^*_1,T^*_2\in\mathcal{T}^*}
    \Pr \left( \{X_{T^*_1}=1\}\cap\{X_{T^*_2}=1\} \right).
  \end{equation*}
  Given two $4$-tuples $T^*_1=(K_1,C_1,w_1,a_1)$ and
  $T^*_2=(K_2,C_2,w_2,a_2)$, we define
  \begin{itemize}
  \item $I = I(T^*_1,T^*_2) :=
    (K_1 \cup \{a_1\}) \cap (K_2 \cup \{a_2\})$ and $i:=|I|$;
  \item $s = s(T^*_1,T^*_2) :=
    \begin{cases}
      1 & \mbox{if }K_1 = K_2,\\
      2 & \mbox{otherwise};
    \end{cases}$
  \item $\mathcal{L}_\ell$ to be the set of all $(j+1)$-subsets of
    $\{C_\ell\cup\{a_\ell\}\cup\{w_\ell\}\}$ for $\ell=1,2$ and 
    \begin{equation*}
      t=t(T^*_1,T^*_2) :=
      |(\mathcal{L}_1\cup\mathcal{L}_2)\setminus\{C_1\cup\{w_1\},C_2\cup\{w_2\}\}|,
    \end{equation*}
    i.e.\ the number of $(j+1)$-sets that are sides of the (potential)
    $j$-shells of $T^*_1$ and $T^*_2$, but not a base of either
    $j$-shell. 
  \end{itemize}
  If $s=2$ and the intersection of the two simplices contains a petal,
  then $T^*_1$ and $T^*_2$ cannot both form an $\Mjst$,
  because~\ref{Mjst:flower} would be violated. In the following, we
  therefore assume that this is not the case.

  The probability that both $T^*_1$ and $T^*_2$
  satisfy~\ref{Mjst:simplex} is $p^s$. As before,~\ref{Mjst:flower}
  holds whp. Conditioned on~\ref{Mjst:simplex} and~\ref{Mjst:flower}
  holding, we claim that~\ref{Mjst:shell} holds for both tuples
  simultaneously with probability $(1+o(1))q^t$. In order to prove
  this, denote the relevant sides of the two $j$-shells by
  $L_1,\dotsc,L_t$. No $k$-simplex can contain more than one side of
  the same $j$-shell, because otherwise it would also contain the base
  of the $j$-shell, which would contradict~\ref{Mjst:flower}. In
  particular, no $k$-simplex contains at least three of the $L_i$.
  Each $L_i$ lies in a $k$-simplex with probability $(1+o(1))q$
  by~\eqref{eq:side}. Moreover, the number of $(k+1)$-sets that
  contain $L_i\cup L_{i'}$ for some $i'\not=i$ is $O(n^{k-j-1})$.
  Thus, the probability that $L_i$ lies in such a $k$-simplex is
  \begin{equation*}
    1-(1-p)^{O(n^{k-j-1})} = O\left(\frac{\log n}{n^2}\right)
    \stackrel{\eqref{eq:q}}{=} o(q^2).
  \end{equation*}
  This means that the probability that $L_1,\ldots,L_t$ all lie in
  $k$-simplices is $(1+o(1))q^t$. This in turn yields
  \begin{equation}\label{eq:psqt}
    \Pr(\{X_{T^*_1}=1\} \cap \{X_{T^*_2}=1\}) = (1+o(1))p^s q^t.
  \end{equation}

  Define $\mathcal{T}^2(i,s,t)$ to be the set of pairs $(T_1^*,T_2^*)
  \in \mathcal{T}^*\times \mathcal{T}^*$ with parameters $i,s$ and
  $t$. Denote by $\mathcal{S}$ the set of triples $(i,s,t)$ for which
  $\mathcal{T}^2(i,s,t)$ is non-empty. With this
  notation,~\eqref{eq:psqt} implies that
  \begin{equation*}
    \EE(\varMjst^2) = (1+o(1))\sum_{(i,s,t) \in \mathcal{S}}\,
    \sum_{(T^*_1,T^*_2) \in \mathcal{T}^2(i,s,t)} p^sq^t.
  \end{equation*}
  Observe that $|\mathcal{T}^2(i,s,t)| = O(n^{2k+4-i})$. We can now
  estimate the contributions of all the summands, distinguishing the
  possible values of $s$ and $i$. 

  \emph{Case 1:} s=1. This means that $K_1 = K_2$ and thus $i\ge k+1$.
  \begin{itemize}
  \item $i=k+1$. In this case $a_1 \neq a_2$ and thus the sets of
    sides of the two $j$-shells would be disjoint, i.e.\ $t=2j+2$.
    Therefore we get a contribution of order
    \begin{equation*}
      O\left(pq^{2j+2}n^{2k+4-(k+1)}\right)
      \stackrel{\eqref{eq:expec}}{=}
      O\left(\frac{\EE(\varMjst)^2}{pn^{k+1}}\right)
      = o(\EE(\varMjst)^2).
    \end{equation*}
  \item $i=k+2$. The two $j$-shells have the same apex vertex and thus
    the $j$-shells coincide if and only if they have the same base.
    This means that $t\geq j+1$, which gives a contribution of order
    \begin{equation*}
      O(pq^{j+1}n^{2k+4-(k+2)}) \stackrel{\eqref{eq:expec}}{=}
      O\left( \EE(\varMjst) \right) =
      o\left( \EE(\varMjst)^2 \right).
    \end{equation*}
  \end{itemize}

  \emph{Case 2:} s=2.
  \begin{itemize}
  \item $i=0$. We show that this case represents the dominant
    contribution to $\EE(\varMjst^2)$. The two $j$-shells are
    disjoint, hence $t=2j+2$. Recall that we have
    \begin{equation*}
      (1+o(1))\frac{n^{k+2}}{j!(k-j)!}
    \end{equation*}
    choices for $T^*_1$. For any fixed $T^*_1$, the number of choices
    for $T^*_2$ that yield $i=0$ is
    \begin{equation*}
      \binom{n-k-1}{k+1}\binom{k+1}{j}(k-j+1)(n-2k-3) =
      (1+o(1))\frac{n^{k+2}}{j!(k-j)!}.
    \end{equation*}
    Thus, the contribution of all such pairs is
    \begin{equation*}
      (1+o(1))\frac{p^2q^{2j+2}n^{2k+4}}{(j!(k-j)!)^2}
      \stackrel{\eqref{eq:expec}}{=} (1+o(1)) \EE(\varMjst)^2.
    \end{equation*}
  \item $1\le i\le j$. In this case $T^*_1$ and $T^*_2$ cannot share a
    $j$-simplex of their shells, i.e.\ $t=2j+2$. Therefore the
    contribution is
    \begin{equation*}
      O\left(p^2q^{2j+2}n^{2k+4-i}\right)
      \stackrel{\eqref{eq:expec}}{=}
      O\left(\frac{\EE(\varMjst)^2}{n^i}\right) = o(\EE(\varMjst)^2).
    \end{equation*}
  \item $i=j+1$. Here, $T^*_1$ and $T^*_2$ can share at most one
    $j$-simplex of their shells, which means $t \geq 2j+1$ and we have
    a contribution of order
    \begin{equation*}
      O\left(p^2q^{2j+1}n^{2k+4-(j+1)}\right)
      \stackrel{\eqref{eq:expec}}{=}
      O\left(\frac{\EE(\varMjst)^2}{qn^{j+1}}\right)
      \stackrel{\eqref{eq:q}}{=} o(\EE(\varMjst)^2).
    \end{equation*}
  \item $j+2 \le i \le k+2$. In this case $t \ge j$, because $T^*_1$
    and $T^*_2$ may share their $j$-shells but have different bases,
    i.e.\ two $j$-simplices of the (potential) $j$-shells may be
    automatically present because of $K_1$ and $K_2$. Therefore the
    contribution is
    \begin{equation*}
      O\left(p^2q^jn^{2k+4-i}\right) \stackrel{\eqref{eq:expec}}{=}
      O\left(\frac{\EE(\varMjst)^2}{q^{j+2}n^i}\right)
      \stackrel{\eqref{eq:q}}{=} o(\EE(\varMjst)^2).
    \end{equation*}
  \end{itemize}
  Summing over all cases shows that $\EE(\varMjst^2) =
  (1+o(1))\EE(\varMjst)^2$, as desired. Thus, Chebyshev's inequality
  implies that $\varMjst = (1+o(1))\EE(\varMjst)$ whp.
\end{proof}

\begin{remark} \label{rem:atmostonestar}
  The case $s=1$, $i=k+1$ in the proof of Lemma~\ref{lem:secondmoment}
  also gives the expected number of pairs of copies of $\Mjst$ coming
  from a common $\Mjmin$. Since this is of order
  \begin{equation*}
    \Theta\left(pq^{2j+2}n^{k+3}\right) \stackrel{\eqref{eq:q}}{=}
    \Theta\left( \frac{(\log n)^{2j+3}}{n^j}\right) = o(1),
  \end{equation*}
  by Markov's inequality we deduce that whp in $\cG_{\pjmom}$ each 
  copy of $\Mjmin$ can be extended to \emph{at most} one copy of 
  $\Mjst$. We will make use of
  this observation in Lemma~\ref{lem:pj-1pjone}.
\end{remark}

In contrast to Remark~\ref{rem:atmostonestar}, the following lemma
ensures that at around $p=\pjone$, whp every $j$-simplex in $\cG_p$ is
the base of ``many'' $j$-shells. Thus it is very likely that each copy
of $\Mjmin$ gives rise to several copies of $\Mjst$, allowing us to
consider just copies of $\Mjmin$ as obstructions to \connectedness. In
other words,
\begin{equation*}
  \begin{array}{lcr} 
    \mbox{whp \quad for each } p\geq \pjone\mbox{,} & \hspace{2.5ex} &
    \Mjmin \subset \cG_p \hspace{1.5ex} \Rightarrow \hspace{1.5ex}
    \Mjst \subset \cG_p.
  \end{array}
\end{equation*}
Combining this with~\eqref{eq:Mj*implies}, the existence of copies of
$\Mjmin$, $\Mjst$ and $\Mj$ are essentially equivalent for
$p\ge\pjone$. Recall from Definition~\ref{def:addingsimplex} that for
a complex $\cG$ and a set $B$, $\cG + B$ is the complex obtained by
adding the set $B$ and its downward-closure to $\cG$.

\begin{lem}\label{lem:jshells}
  Let $p=\pjone$. Then there exists a positive constant $\gamma$ such
  that with high probability for every $(j+1)$-set $B$ the complex
  $\cG_p + B$ contains at least $\gamma n$ many $j$-shells that
  contain $B$. 
\end{lem}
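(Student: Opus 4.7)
The plan is to fix a $(j+1)$-set $B=\{v_1,\dotsc,v_{j+1}\}$, define for each apex vertex $a\in[n]\setminus B$ the indicator $X_a$ that $B\cup\{a\}$ forms a $j$-shell (equivalently, that each of the $j+1$ sides $L_i(a):=(B\setminus\{v_i\})\cup\{a\}$ lies in some $k$-simplex of $G_p$), and to show that $f:=\sum_a X_a$ exceeds $\gamma n$ with probability $1-o(n^{-(j+1)})$ for some constant $\gamma>0$; a union bound over the $\binom{n}{j+1}$ choices of $B$ then yields the lemma. The crucial point is that a Chebyshev-type argument is not strong enough to beat the union bound, so I would apply Talagrand's inequality to obtain exponential concentration.

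For the expected value, for each side $L_i(a)$ I would restrict attention to those witnessing $(k+1)$-sets $K$ with $K\supseteq L_i(a)$ and $v_i\notin K$. The collections of such $(k+1)$-sets for different $i\in[j+1]$ are pairwise disjoint, because any $K$ in the $i$-th family satisfies $K\cap B=B\setminus\{v_i\}$, so the events $E_i(a)$ of having at least one such $K$ be a $k$-simplex are mutually independent. At $p=\pjone$, each $E_i(a)$ occurs with probability $1-(1-p)^{\binom{n-j-2}{k-j}}$, which tends to $1-\exp\bigl(-j!/(10(k+1)!)\bigr)$, a positive constant $\alpha$. Hence $\Pr(X_a=1)\geq \alpha^{j+1}=:\beta$ and $\mathbb{E}[f]\geq \beta(n-j-1)=\Theta(n)$.

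For concentration I would view $f$ as a function of the independent indicators of the $(k+1)$-sets being $k$-simplices. Flipping a single such indicator changes $f$ by at most $k+1-j=O(1)$ (the extremal case is a $(k+1)$-set containing all of $B$, which covers sides for up to $k-j$ apex vertices), so $f$ is $O(1)$-Lipschitz; moreover $f$ is $(j+1)$-certifiable since each of the $s$ apex vertices contributing to $f\geq s$ can be witnessed by $j+1$ specific $k$-simplices, one per side. Talagrand's inequality then yields
\begin{equation*}
  \Pr\bigl(f\leq m(f)/2\bigr)\leq 4\exp\bigl(-\Omega(m(f))\bigr)=\exp(-\Omega(n)),
\end{equation*}
once one checks that the median $m(f)$ is $\Theta(n)$; this follows from a standard second moment calculation giving $\mathrm{Var}(f)=O(n)$, so that by Chebyshev the median and mean agree up to $O(\sqrt n)$. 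Taking the union bound over $B$ gives a total failure probability of $O(n^{j+1})\cdot\exp(-\Omega(n))=o(1)$, so whp every $B$ admits at least $\gamma n$ $j$-shells containing it.

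The main obstacle is to arrange the argument so that the hypotheses of Talagrand's inequality are satisfied with constants that do not deteriorate with $n$; the disjointness trick underlying the $E_i(a)$ events is what makes both the lower bound on $\mathbb{E}[X_a]$ and the certifiability constant $j+1$ independent of $n$, while a secondary second moment estimate in the spirit of Lemma~\ref{lem:secondmoment} controls $\mathrm{Var}(f)$ and hence the gap between median and mean. A minor technical nuisance is that $\cG_p+B$ is the complex we ultimately care about, but since adding $B$ alone does not produce any new $k$-simplex or new side covering, working entirely in $\cG_p$ is equivalent.
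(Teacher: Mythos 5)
Your proposal is correct, and its first half — lower-bounding $\Pr(X_a=1)$ by a positive constant via the observation that the witnessing families $\{K : K\supseteq L_i(a),\ v_i\notin K\}$ are pairwise disjoint across $i$, hence the side events are independent — is exactly the paper's idea (the paper implements the disjointness by restricting the $k-j$ filler vertices to a fixed set $D$ disjoint from $B$, which amounts to the same thing). Where you diverge is the concentration step. The paper additionally restricts the apex vertices to a fixed set $A$ disjoint from $D\cup B$, so that the witnessing $(k+1)$-sets for distinct apexes $a,a'$ are also disjoint; the restricted count then stochastically dominates a $\Bi(\lceil n/3\rceil,\lambda)$ variable and a plain Chernoff bound gives the $e^{-\Omega(n)}$ failure probability needed to survive the union bound over the $\binom{n}{j+1}$ choices of $B$. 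You instead count \emph{all} shells, accept that the $X_a$ are dependent, and invoke Talagrand's inequality (Lipschitz constant $k+1-j$, certifiability constant $j+1$) together with a secondary second-moment estimate to pin the median at $\Theta(n)$. Both routes work; the paper's is more elementary and self-contained (no Talagrand, no median-versus-mean step), at the cost of only counting a restricted subfamily of shells — which is harmless since only a lower bound is needed. Two small remarks on your write-up: the extremal case for the Lipschitz constant is $|K\cap B|=j$, which can serve as the unique witness of one side for each of the $k+1-j$ apexes in $K\setminus B$ (the case $B\subseteq K$ you single out only gives $k-j$); and for the median it suffices to have $\var(f)=o((\EE f)^2)$ so that $\Pr(f<\EE f/2)<1/2$ — you do not actually need the sharper $\var(f)=O(n)$, though that bound does hold by the covariance computation you sketch. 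Neither point affects the validity of the argument.
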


\begin{proof}
  Recall that
  \begin{equation*}
    p=\pjone = \frac{1}{10(j+1)\binom{k+1}{j+1} n^{k-j} }.
  \end{equation*}

  Let $L_1,\ldots,L_{j+1}$ denote the $(j-1)$-simplices contained in
  $B$. We are interested in the number of vertices $a$ such that
  $B\cup\{a\}$ forms a $j$-shell, i.e.\ the number of $a \notin B$
  such that $L_i \cup \{a\}$ is a $j$-simplex in $\cG_p + B$ for all
  $i\in[j+1]$. To ensure independence in the following calculations,
  we will only consider a certain type of such $j$-shells, giving us a
  lower bound on their total number. Pick two disjoint sets $A$ and
  $D$ both of size $\lceil n/3 \rceil$ such that 
  $A \cap B = D \cap B = \emptyset$.
  We will consider only (potential) $j$-shells formed in the following
  way.
  \begin{itemize}
  \item The vertex $a$ is in $A$;
  \item for each $i=1,\ldots,j+1$, the $j$-simplex $L_i\cup\{a\}$ is
    present in $\cG_p$ (and thus also in $\cG_p + B$) as a subset of
    the $k$-simplex $R_i \cup L_i \cup \{a\}$, for some (not
    necessarily distinct) $(k-j)$-sets $R_1,\ldots,R_{j+1}$ in $D$.
  \end{itemize} 
  In this way all the required $j$-simplices would come from different
  $k$-simplices, ensuring independence.

  Fix $a \in A$ and let $E_a$ be the event that $B\cup\{a\}$ is a
  $j$-shell. Observe that for each $L_i$, the probability that there
  is no suitable set $R_i \subseteq D$ is
  \begin{equation*}
    (1-p)^{\binom{|D|}{k-j}} \leq
    (1-p)^{\frac{n^{k-j}}{4^{k-j}(k-j)!}}.
  \end{equation*}
  Therefore, setting $\beta:=10(j+1)\binom{k+1}{j+1}4^{k-j}(k-j)!$, by
  independence we have
  \begin{align*}
    \Pr(E_a)
    &\ge \left(1-(1-p)^{\frac{n^{k-j}}{4^{k-j}(k-j)!}}\right)^{j+1}\\
    &\ge \left(\frac{n^{k-j}}{4^{k-j}(k-j)!}p-\frac{1}{2}\left(
    \frac{n^{k-j}}{4^{k-j}(k-j)!}p\right)^2\right)^{j+1}\\
    &= \left(\frac{1}{\beta}-\frac{1}{2\beta^2}\right)^{j+1} =:
    \lambda > 0.
  \end{align*}
  The events $E_a$ are independent for distinct $a$, so the number of
  $j$-shells we count in this way dominates 
  $\mbox{Bi}(\lceil n/3 \rceil,\lambda)$.
  Fixing a constant $0< \gamma <\lambda/3$, we can apply the Chernoff
  bound (Lemma~\ref{lem:chernoff}) to deduce that
  \begin{equation*}
    \Pr\left( \mbox{Bi}(\lceil n/3 \rceil,\lambda) < \gamma n \right) \leq
    \exp\left(- \frac{(n\lambda/3 - \gamma n)^2}{2n\lambda/3} \right)
    = \exp\left(- \frac{n(\lambda/3 - \gamma)^2}{2\lambda/3} \right).
  \end{equation*}

  Finally, taking a union bound over all $\binom{n}{j+1}$ possible
  choices for the set $B$, we can bound the probability that the
  desired property does not hold by
  \begin{equation*}
    \binom{n}{j+1}\exp\left(-\frac{n(\lambda/3-\gamma)^2}{2\lambda/3}\right)
    = o(1),
  \end{equation*}
  as required.
\end{proof}

We now also prove that shortly before the (claimed) critical threshold for
\formalconnectedness, the number of copies of $\Mjmin$ is concentrated
around its expectation, using similar techniques as in
Lemma~\ref{lem:secondmoment}.

\begin{lem} \label{lem:meanconc}
  Let $\omega=o(\log n)$ be a function of $n$ which tends to infinity
  as $n \rightarrow \infty$. Let
  \begin{equation*}
    p\in\left[\pjmin,\frac{(j+1)\log n+\log\log n-\omega}{(k-j+1)n^{k-j}}(k-j)!\right],
  \end{equation*}
  and let $\varMjmin$ be the number of copies of $\Mjmin$ in $\cG_p$.
  Then $\EE(\varMjmin) = \Omega(e^\omega)$ and with high probability
  $\varMjmin = (1+o(1))\EE(\varMjmin)$.
\end{lem}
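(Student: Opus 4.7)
The plan is to compute $\EE(\varMjmin)$ directly by a first-moment calculation and then apply Chebyshev's inequality, establishing concentration via a second-moment analysis parallel to the one in the proof of Lemma~\ref{lem:secondmoment} but without the $j$-shell condition and with a more delicate treatment of the flower condition, which here is the main restrictive event.

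For the first moment, fix a pair $(K,C)$ with $K\subseteq[n]$ of size $k+1$ and $C\subseteq K$ of size $j$. A $(k+1)$-set $S\ne K$ contains some petal of $\mathcal{F}(K,C)$ precisely when $C\subseteq S$ and $S\cap(K\setminus C)\ne\emptyset$; hence by inclusion-exclusion the number $N$ of such sets is
\begin{equation*}
  N = \binom{n-j}{k-j+1}-\binom{n-k-1}{k-j+1}-1 = (1+O(1/n))\,\frac{k-j+1}{(k-j)!}\,n^{k-j},
\end{equation*}
so $\Pr[(K,C)\text{ forms a copy of }\Mjmin]=p(1-p)^N$. At the upper endpoint $p^+:=\frac{(j+1)\log n+\log\log n-\omega}{(k-j+1)n^{k-j}}(k-j)!$ of the interval we have $p^+N=(j+1)\log n+\log\log n-\omega+O(\log n/n)$ and $N(p^+)^2=o(1)$, giving $(1-p^+)^N=(1+o(1))\,n^{-(j+1)}(\log n)^{-1}e^{\omega}$. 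Multiplying by $p^+$ and by the number $\binom{n}{k+1}\binom{k+1}{j}$ of pairs and simplifying the combinatorial constants yields
\begin{equation*}
  \EE(\varMjmin)\bigm|_{p=p^+} = (1+o(1))\,\frac{j+1}{j!(k-j+1)^2}\,e^{\omega} = \Omega(e^\omega).
\end{equation*}
Since $p\mapsto p(1-p)^N$ is strictly decreasing for $p>1/(N+1)=O(n^{-(k-j)})$, and hence in particular on $[\pjmin,p^+]$, the minimum of $\EE(\varMjmin)$ on this interval is attained at $p^+$, so $\EE(\varMjmin)=\Omega(e^\omega)$ throughout.

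For concentration I would expand $\EE(\varMjmin^2)=\sum_{T_1,T_2}\Pr[X_{T_1}=1=X_{T_2}]$ over ordered pairs of tuples $T_\ell=(K_\ell,C_\ell)$ and classify them by $s:=|\{K_1,K_2\}|\in\{1,2\}$, $i:=|K_1\cap K_2|$, and $c:=|C_1\cap C_2|$. Writing $N_\ell$ for the forbidden set of $T_\ell$ and $m:=|N_1\cap N_2|$, the joint probability vanishes if some $K_\ell$ lies in the other's forbidden set, and otherwise equals $p^s(1-p)^{2N-m}$. The dominant contribution comes from $s=2$, $i=0$: then $K_1,K_2$ are disjoint, and every shared forbidden set must contain $C_1\cup C_2$ (a $2j$-set), so $m=O(n^{k-2j-1})$ if $k\ge 2j+1$ and $m=0$ otherwise; in either case $mp=o(1)$, so $(1-p)^{-m}=1+o(1)$ and the contribution is $(1+o(1))\EE(\varMjmin)^2$. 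For $s=1$ the crude bound $m\le N$ gives contribution at most $\binom{k+1}{j}\EE(\varMjmin)=O(\EE(\varMjmin))=o(\EE(\varMjmin)^2)$. For $s=2$ with $i\ge 1$ the number of tuples drops by a factor $n^{-i}$; since every shared forbidden set must contain $C_1\cup C_2$ whose size is at least $\max(j,2j-i)$, $m$ is bounded by a function of $(i,c)$, and a case analysis keyed by $(i,c)$ and paralleling the one in Lemma~\ref{lem:secondmoment} shows that the correction $(1-p)^{-m}$ is outweighed by the $n^{-i}$ saving in each sub-case, yielding a total contribution of $o(\EE(\varMjmin)^2)$. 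Chebyshev's inequality then gives $\varMjmin=(1+o(1))\EE(\varMjmin)$ whp.

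The main obstacle is the second-moment analysis for $s=2,\,i\ge1$: unlike in Lemma~\ref{lem:secondmoment}, where $p$ is so small that $(1-p)^{-m}=1+o(1)$ for every relevant overlap, here $pN=\Theta(\log n)$ and $(1-p)^{-m}$ can be polynomially large in $n$ when $C_1\cup C_2$ is small. The balancing argument---controlling $m$ via $|C_1\cup C_2|$ and setting it against the $n^{-i}$ drop in the tuple count---is uniform across sub-cases, but its execution is the same kind of patient enumeration as in the proof of Lemma~\ref{lem:secondmoment}.
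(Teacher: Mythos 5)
Your proposal is correct in substance and follows the same overall strategy as the paper (explicit first moment, monotonicity of $p\mapsto p(1-p)^N$ to reduce to the upper endpoint, then Chebyshev via a second moment); the first-moment computation and the cases $s=1$ and $s=2$, $i=0$ match the paper's proof essentially verbatim. Where you diverge is the case $s=2$, $i\ge1$, which you flag as the main obstacle and propose to settle by a case analysis on $(i,c)$ balancing $(1-p)^{-m}$ against the $n^{-i}$ saving in the tuple count. The paper avoids this entirely, and the key point is one you already have in hand but do not exploit: you correctly note that the joint probability vanishes whenever some $K_\ell$ lies in the other tuple's forbidden set. For every surviving pair with $K_1\ne K_2$, no petal of $\mathcal{F}(K_1,C_1)$ is contained in $K_2$ and vice versa; in particular the two flowers share no petal, so any $(k+1)$-set counted by $m$ contains two \emph{distinct} $(j+1)$-sets (one petal from each flower), hence at least $j+2$ prescribed vertices, giving $m=O(n^{k-j-1})$ and $pm=o(1)$ \emph{uniformly over all} $i$ and $c$. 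Thus $(1-p)^{-m}=1+o(1)$ for every non-vanishing pair, the probability is always $(1+o(1))p^2(1-p)^{2N}$, and summing over all at most $\binom{n}{k+1}^2\binom{k+1}{j}^2$ pairs gives $(1+o(1))\EE(\varMjmin)^2$ with no sub-case analysis at all. (This is exactly why the paper can write the joint probability as $(1+o(1))p^sr^t$ and dispose of $s=2$ in three lines.) Your balancing route does close if executed with a sufficiently sharp bound on $m$ — for instance, for $c=j$ and $i>j$ one gets $pm\le(1+o(1))\tfrac{(j+1)(i-j)}{k-j+1}\log n$, and $(j+1)(i-j)<i(k-j+1)$ holds for all $1\le i\le k$ — but the delicate cases where $(1-p)^{-m}$ is polynomially large are precisely those in which the probability is actually zero, so the enumeration you anticipate is unnecessary.
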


\begin{proof}
  Let $K$ be a $(k+1)$-set and let $C$ be a $j$-set in $K$. In order
  for $(K,C)$ to form a copy of $M_j^-$, we need $K$ to be a
  $k$-simplex and each petal of the flower $\mathcal{F}(K,C) =
  \{C\cup\{w\} \mid w\in K\setminus C\}$ to lie in no other
  $k$-simplex. For a fixed petal, the probability of this event is
  equal to $r=(1-p)^{\binom{n-j-1}{k-j}-1}$ defined in~\eqref{eq:r}.
  Moreover, there are $O(n^{k-j-1})$ many $(k+1)$-sets that contain
  more than one petal. Now since
  \begin{equation*}
    (1-p)^{O(n^{k-j-1})} = 1-o(1),
  \end{equation*}
  whp there are no $k$-simplices containing more than one petal. Thus,
  \begin{align} \label{eq:expMjminus}
    \EE(\varMjmin) &=
    (1+o(1))\binom{n}{k+1} \binom{k+1}{j} p r^{k-j+1} \nonumber\\ 
    &= (1+o(1))\binom{n}{k+1}\binom{k+1}{j}p(1-p)^{(k-j+1)\binom{n-j-1}{k-j}}.
  \end{align}

  The derivative of the right hand side of~\eqref{eq:expMjminus} with
  respect to $p$ is negative throughout the considered interval.
  Therefore the upper extreme of $p$ gives the smallest expectation,
  which is of order
 
  \begin{equation*}
    \Theta(n^{k+1}) \Theta\left(\frac{\log n}{n^{k-j}}\right)
    \Theta\left( \exp \left(-(j+1)\log n -\log\log n 
    +\omega \right)\right) =
    \Theta\left(e^\omega\right) \rightarrow \infty.
  \end{equation*}
  
  In order to apply a second moment argument, we will now show that
  \begin{equation*}
    \EE(\varMjmin^2) = (1+o(1))\EE(\varMjmin)^2,
  \end{equation*} 
  implying that whp $\varMjmin$ is concentrated around its
  expectation. Let $\mathcal{T}^-$ denote the family of pairs
  $T^-=(K,C)$, where $K \subseteq [n]$ with $|K|=k+1$ and $C$ is a
  $j$-subset of $K$. Each of these pairs may form a copy of $\Mjmin$
  with $K$ as $k$-simplex and $C$ as centre of the flower
  $\mathcal{F}(K,C)$.

  Given two pairs $T^-_1=(K_1,C_1)$ and $T^-_2=(K_2,C_2)$, we define
  \begin{itemize}
  \item $s = s(T^-_1,T^-_2) :=
    \begin{cases}
      1 & \mbox{if } K_1 = K_2,\\
      2 & \mbox{otherwise};
    \end{cases}$
  \item $\mathcal{F}_\ell := \mathcal{F}(K_\ell,C_\ell)$ for
    $\ell =1,2$;
  \item $t=t(T^-_1,T^-_2) := |\mathcal{F}_1 \cup \mathcal{F}_2|$, 
    i.e.\ the total number of (potential) petals. 
  \end{itemize}
  The probability of two pairs in $\mathcal{T}^-$ both forming a copy 
  of $\Mjmin$ is $(1+o(1))p^sr^t$. With this observation, we can
  determine the contribution to $\EE(\varMjmin^2)$ of the pairs with a
  fixed value of $s$.
  \begin{itemize}
  \item $s=1$. Petals can be shared, but certainly $t\geq k-j+1$ and
    the contribution is at most of order
    \begin{equation*}
      O\left(n^{k+1} p r^{k-j+1}\right)
      \stackrel{\eqref{eq:expMjminus}}{=} O(\EE(\varMjmin)) =
      o(\EE(\varMjmin)^2).
    \end{equation*}
  \item $s=2$. By definition, a petal cannot lie in any other
    $k$-simplex and thus only the pairs with $t=2(k-j+1)$ have a
    positive probability of both forming a copy of $\Mjmin$. The
    number of such pairs is
    \begin{equation*}
      \binom{n}{k+1} \binom{n-k-1}{k+1} \binom{k+1}{j}^2 + O(n^{2k+1})
       = (1+o(1))\binom{n}{k+1}^2\binom{k+1}{j}^2.
    \end{equation*}
    Thus these pairs provide a contribution of
    \begin{equation*}
      (1+o(1))\binom{n}{k+1}^2\binom{k+1}{j}^2p^2r^{2(k-j+1)}
      \stackrel{\eqref{eq:expMjminus}}{=} (1+o(1))\EE(\varMjmin)^2.
    \end{equation*}
  \end{itemize} 
  In total, we have $\EE(\varMjmin^2) =
  (1+o(1))\EE(\varMjmin)^2$, and Chebyshev's inequality implies that
  $\varMjmin = (1+o(1))\EE(\varMjmin)$ whp.
\end{proof}

\subsection{Excluding obstructions and determining the hitting
  time}\label{sec:subcritical:excluding}

The goal of this section is to determine when there are no more copies
of $\Mj$ in $\cG_p$ whp. This result, together with
Lemmas~\ref{lem:jshells} and~\ref{lem:meanconc}, will enable us to
prove that whp the birth time $\pMj$ is close to $\pj$,
the (claimed) threshold for \connectedness\ (Corollary~\ref{cor:pMj}).

Consider the probability 
\begin{equation}\label{eq:pjbar}
  \bar{p}_j :=
  \frac{(j+1)\log n + \frac{1}{2}\log\log n}{(k-j+1) n^{k-j}}(k-j)!.
\end{equation}
Define $\pstMj$ as the first birth time $p$ larger than $\bar{p}_j$
such that there are no copies of $\Mj$ in $\cG_p$. By
Lemmas~\ref{lem:jshells} and~\ref{lem:meanconc}, whp $\cG_{\bar{p}_j}$
contains a growing number of copies of $\Mj$. By definition of $\pMj$,
conditioned on this high probability event we have $\pstMj \leq \pMj$.
In the next lemma we show that in fact they are equal whp. To do so,
we need the following definition.

\begin{definition} \label{def:localobstacle}
  Given a $k$-complex $\cG$, a $k$-simplex $K$ is a \emph{local
  obstacle} if $K$ contains at least $k-j+1$ many $j$-simplices which
  are not contained in any other $k$-simplex of $\cG$.
\end{definition}

Note that this definition is similar to that of $\Mjmin$
(Definition~\ref{def:mjminus}), but without the restriction that the
$k-j+1$ many $j$-simplices must form a flower.

\begin{lem}\label{lem:localobstacle}
  With high probability, for all $p\geq \bar{p}_j$ every local
  obstacle that exists in $\cG_p$ also exists in $\cG_{\bar{p}_j}$.
  In particular, we have $\pMj = \pstMj$ whp.
\end{lem}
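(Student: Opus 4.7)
The plan is in two parts. For the main statement, the key structural observation is that being a local obstacle is monotone decreasing in $p$: if a $k$-simplex $K\in\cG_p$ has a $j$-subsimplex $\sigma$ contained in no other $k$-simplex of $\cG_p$, the same holds at every earlier time $p'\in[p_K,p]$, where $p_K$ is the birth time of $K$. Hence if $K$ is a local obstacle in $\cG_p$ for some $p\ge\bar p_j$ but fails to be one in $\cG_{\bar p_j}$, then necessarily $p_K>\bar p_j$, and by the same monotonicity $K$ is already a local obstacle at time $p_K$. It therefore suffices to show that whp no $(k+1)$-set $K$ with $p_K>\bar p_j$ is a local obstacle at its own birth time.

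This will be handled by a first-moment computation. For each pair $(K,S)$ with $K$ a $(k+1)$-set and $S$ a collection of $k-j+1$ of the $(j+1)$-subsets of $K$, the event that $p_K>\bar p_j$ and that $S$ witnesses $K$ as a local obstacle at time $p_K$ translates to: $p_K>\bar p_j$, and every $(k+1)$-set $K'\neq K$ containing some $\sigma\in S$ has birth time larger than $p_K$. By inclusion--exclusion the number of such $K'$ is $N_S=(k-j+1)\binom{n-j-1}{k-j}(1+o(1))$; since distinct birth times are independent uniforms on $[0,1]$,
\begin{equation*}
\Pr[\text{event}]\;=\;\int_{\bar p_j}^1(1-t)^{N_S}\,dt\;\le\;\frac{e^{-\bar p_j N_S}}{N_S}.
\end{equation*}
A direct substitution gives $\bar p_jN_S=((j+1)\log n+\tfrac12\log\log n)(1+o(1))$, so the above is of order $n^{-(k+1)}(\log n)^{-1/2}$. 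Summing over the $\Theta(n^{k+1})$ pairs $(K,S)$ yields an expected total of $O((\log n)^{-1/2})=o(1)$, and Markov's inequality completes the proof of the first assertion.

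For the ``in particular'' claim, the bound $\pMj\ge\pstMj$ follows directly from the definition of $\pstMj$ combined with the fact (via Lemmas~\ref{lem:jshells} and~\ref{lem:meanconc}) that whp $\cG_{\bar p_j}$ contains copies of $\Mj$, so that $M_j$-copies persist throughout $[\bar p_j,\pstMj)$. For the reverse, suppose for contradiction that some copy $(K,C,J)$ of $\Mj$ exists in $\cG_p$ for some $p>\pstMj\ge\bar p_j$. Then $K$ is a local obstacle in $\cG_p$, so by the first part $K$ is a local obstacle in $\cG_{\bar p_j}$; in particular $K\in\cG_{\bar p_j}\subseteq\cG_{\pstMj}$, and since the petals of $\mathcal{F}(K,C)$ are bare at time $p$ they remain bare at the smaller time $\pstMj$. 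Thus $(K,C)$ forms a copy of $\Mjmin$ in $\cG_{\pstMj}$. Applying Lemma~\ref{lem:jshells} (stated at $\pjone\le\pstMj$, and whose conclusion persists under addition of $k$-simplices), whp $\cG_{\pstMj}$ contains at least $\gamma n$ $j$-shells with base equal to a fixed petal $C\cup\{w_0\}$; at most $k-j$ of them have apex in $K$, so some shell has apex vertex $a\notin K$. The $j$-cycle $J'$ formed by the $j$-simplices of this shell meets $\mathcal{F}(K,C)$ in exactly the single petal $C\cup\{w_0\}$, because every other side of the shell contains $a\notin K$ and hence cannot be a petal. Consequently $(K,C,J')$ is a copy of $\Mj$ in $\cG_{\pstMj}$, contradicting the defining property of $\pstMj$.

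The main obstacle is ensuring that the first-moment computation actually yields $o(1)$: the coefficient $\tfrac12$ in the definition of $\bar p_j$ is precisely what produces the decisive $(\log n)^{-1/2}$ factor after exponentiation, so the argument would break down if $\bar p_j$ were replaced by the critical value $\pj$ itself.
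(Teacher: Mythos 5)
Your proof is correct and follows essentially the same route as the paper: a first-moment bound on $(k+1)$-sets that become local obstacles after time $\bar p_j$, where the paper factors the event into ``the witnessing $(j+1)$-sets are not yet simplices at $\bar p_j$'' and ``$K$ has the smallest birth time among all relevant $(k+1)$-sets'' while you integrate over the birth time of $K$ directly -- the two computations give the same $O(n^{-(k+1)}(\log n)^{-1/2})$ bound per pair. Your explicit derivation of the ``in particular'' claim (monotonicity of local obstacles down to the birth time, plus Lemma~\ref{lem:jshells} to upgrade the surviving $\Mjmin$ to a copy of $\Mj$ at time $\pstMj$) spells out a step the paper leaves implicit, and is a welcome addition rather than a deviation.
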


\begin{proof}
  Suppose that $\cG_p$ contains a local obstacle which is not present
  in $\cG_{\bar{p}_j}$ and let $K$ be the $(k+1)$-set realising this
  obstacle. Then its birth time $p_K$ satisfies $p_K\in(\bar{p}_j,p]$.
  The set $K$ can become a local obstacle only if

  \begin{enumerate}
  \item\label{enum:local}
    $K$ contains a collection $\mathcal{L}$ of (at least)
    $k-j+1$ many $(j+1)$-sets which are not yet
    $j$-simplices in $\cG_{\bar{p}_j}$;
  \item\label{enum:birth}
    $p_K$ is smaller than the birth time of any other $(k+1)$-set
    containing at least one of the $(j+1)$-sets in $\mathcal{L}$.
  \end{enumerate}
  
  If $K$ satisfies~\ref{enum:local}, then for any $(j+1)$-set $L 
  \in \mathcal{L}$, no $(k+1)$-set intersecting $K$ precisely in
  $L$ is allowed to be a $k$-simplex in $\cG_{\bar{p}_j}$ and thus
  there are at least $\binom{n-k-1}{k-j}(k-j+1)$ many $(k+1)$-sets
  which are not $k$-simplices. Hence, given $K$ and $k-j+1$ fixed
  $(j+1)$-sets within $K$, the probability of satisfying
  property~\ref{enum:local} in $\cG_{\bar{p}_j}$ is bounded from above
  by
  \begin{align*}
    (1-\bar{p}_j)^{\binom{n-k-1}{k-j}(k-j+1)} &=
    (1+o(1))\exp\left(-\log\left(n^{j+1}\right)-\log\left((\log n)^{1/2}\right)\right)\\
    &= O\left(\frac{1}{n^{j+1}\sqrt{\log n}}\right).
  \end{align*}
  On the other hand, each $(j+1)$-set in $\mathcal{L}$ is
  contained in $\binom{n-j-1}{k-j}$ potential $k$-simplices. In order
  for $K$ to satisfy~\ref{enum:birth}, all those $k$-simplices would
  need to have larger birth time than $K$, which happens with
  probability $O\left(\frac{1}{n^{k-j}}\right)$. Thus, the expected
  number of sets $K$ satisfying~\ref{enum:local} and~\ref{enum:birth}
  is at most
  \begin{equation*}
    \binom{n}{k+1} 2^{\binom{k+1}{j+1}}
    O\left(\frac{1}{n^{j+1}\sqrt{\log n}}\right)
    O\left(\frac{1}{n^{k-j}}\right) =
    O\left(\frac{1}{\sqrt{\log n}}\right) = o(1)
  \end{equation*}
  and the conclusion follows by Markov's inequality.   
\end{proof}

Observe that in particular each copy of $\Mjmin$ is a local obstacle.
Thus, we derive the following corollary.

\begin{cor}\label{cor:noMjmin}
  Whp for all $p\ge\pMj$, there are no copies of $\Mjmin$ in $\cG_p$.
\end{cor}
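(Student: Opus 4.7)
The plan is to deduce the corollary from Lemma~\ref{lem:jshells} by showing that on a single high-probability event, any putative copy $(K,C)$ of $\Mjmin$ at a time $p \ge \pMj$ can always be extended to a copy of $\Mjst$, which by~\eqref{eq:Mj*implies} is a copy of $\Mj$, directly contradicting the definition of $\pMj$.

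First I would set up the two high-probability events on which the rest of the argument runs deterministically. The first is the event $\mathcal{E}$ supplied by Lemma~\ref{lem:jshells} at $p = \pjone$: every $(j+1)$-set $B$ is contained in at least $\gamma n$ many $j$-shells in $\cG_{\pjone} + B$. The second is the event $\pMj > \pjone$, which follows from Lemma~\ref{lem:localobstacle} (giving $\pMj = \pstMj \ge \bar p_j$ whp) together with the trivial comparison $\bar p_j \gg \pjone$. Both events hold whp, so their intersection does too.

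Assume $\mathcal{E}$ holds and $\pMj > \pjone$, and suppose for contradiction that $(K,C)$ is a copy of $\Mjmin$ in $\cG_p$ for some $p \ge \pMj$. Fix any $w \in K \setminus C$ and let $B = C \cup \{w\}$; this is a petal of $\mathcal{F}(K,C)$ and hence already a $j$-simplex of $\cG_p$, so $\cG_p + B = \cG_p$. Since $\cG_{\pjone} \subseteq \cG_p$, every $(j+2)$-set that is a $j$-shell containing $B$ in $\cG_{\pjone} + B$ remains a $j$-shell in $\cG_p$. The event $\mathcal{E}$ then produces at least $\gamma n$ such shells, and since at most $|K \setminus B| = k-j$ of them have their apex inside $K$, we may choose one with apex $a \notin K$. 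The tuple $(K,C,w,a)$ now satisfies conditions \ref{Mjst:simplex} and \ref{Mjst:flower} because $(K,C)$ is a copy of $\Mjmin$, and condition \ref{Mjst:shell} because $C \cup \{w\} \cup \{a\}$ is a $j$-shell in $\cG_p$. Hence $(K,C,w,a)$ is a copy of $\Mjst$, so by~\eqref{eq:Mj*implies} a copy of $\Mj$ exists in $\cG_p$, contradicting $p \ge \pMj$.

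The only mildly delicate point is that $\pMj$ is random while Lemma~\ref{lem:jshells} is invoked at the fixed time $\pjone$. This causes no real trouble because $j$-simplices are only ever added as $p$ grows, so the shell-counting guarantee at $\pjone$ transfers upward to every later complex $\cG_p$, and $B$ being already a $j$-simplex of $\cG_p$ means no artificial augmentation is needed when checking the shell property in $\cG_p$ itself.
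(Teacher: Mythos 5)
Your proof is correct. The paper derives the corollary in a single line from Lemma~\ref{lem:localobstacle}: each copy of $\Mjmin$ is in particular a local obstacle, so whp none is created after time $\bar{p}_j$; hence a copy present at some $p\ge\pMj$ would have been present throughout $[\bar{p}_j,p]$ and — via the same upgrade $\Mjmin\Rightarrow\Mjst\Rightarrow\Mj$ from Lemma~\ref{lem:jshells} and~\eqref{eq:Mj*implies} that you use — would yield a copy of $\Mj$ at time $\pstMj=\pMj$, contradicting the definition of $\pstMj$. You instead apply Lemma~\ref{lem:jshells} directly at the time $p$ in question, transferring the shell guarantee from $\cG_{\pjone}$ to $\cG_p$ by monotonicity of $j$-shells, and contradict the definition of $\pMj$ itself; Lemma~\ref{lem:localobstacle} enters your argument only to secure $\pMj>\pjone$, which one could equally obtain from Lemmas~\ref{lem:jshells} and~\ref{lem:meanconc}. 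The two arguments share the essential ingredient (the whp implication $\Mjmin\subset\cG_p\Rightarrow\Mjst\subset\cG_p$ supplied by Lemma~\ref{lem:jshells}) and differ only in bookkeeping; your version has the merit of making the uniformity over the random time $\pMj$ fully explicit.
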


We can now easily deduce that the birth time $\pMj$ at which the last
copy of $M_j$ disappears is close to $\pj$. Observe that the following
corollary is exactly
Theorem~\ref{thm:gentheor}~\ref{thm:gentheor:pMj}.

\begin{cor}\label{cor:pMj}
  Let $\omega$ be any function of $n$ which tends to infinity as $n$
  tends to infinity. Then whp
  \begin{equation*}
    \frac{(j+1)\log n+\log\log n-\omega}{(k-j+1)n^{k-j}}(k-j)! < \pMj
    < \frac{(j+1)\log n+\log\log n+\omega}{(k-j+1)n^{k-j}}(k-j)!.
  \end{equation*}
\end{cor}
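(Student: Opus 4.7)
The plan is to handle the two inequalities separately, combining the three main lemmas of this section: Lemma~\ref{lem:meanconc} powers the lower bound, a first-moment calculation powers the upper bound, and in both cases Lemmas~\ref{lem:jshells} and~\ref{lem:localobstacle} bridge the gap between $\Mjmin$ and $\Mj$. Without loss of generality I may assume $\omega=o(\log n)$, since replacing $\omega$ by $\min(\omega,\log\log n)$ only strengthens the claim (and brings us inside the range where Lemma~\ref{lem:meanconc} applies).

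For the lower bound, I would evaluate at $p=p_-:=\frac{(j+1)\log n+\log\log n-\omega}{(k-j+1)n^{k-j}}(k-j)!$. Lemma~\ref{lem:meanconc}, applied at the upper end of its interval, yields whp a copy $(K,C)$ of $\Mjmin$ in $\cG_{p_-}$. To upgrade this to a copy of $\Mj$, I apply Lemma~\ref{lem:jshells} at probability $\pjone<p_-$ to the $(j+1)$-set $B:=C\cup\{w\}$ obtained by fixing a petal of $\mathcal{F}(K,C)$. Since $B$ is already a $j$-simplex of $\cG_{p_-}\supseteq\cG_{\pjone}$ and $j$-simplices persist once born, $B$ is the base of at least $\gamma n$ many $j$-shells in $\cG_{p_-}$; discarding the at most $k$ shells whose apex lies in $K$ leaves a valid apex $a\in[n]\setminus K$, so $(K,C,w,a)$ forms a copy of $\Mjst$ in $\cG_{p_-}$, and hence of $\Mj$ by~\eqref{eq:Mj*implies}. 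As $p_-$ is deterministic and almost surely not a birth time, the copy survives to some $p_-+\epsilon$, giving $\pMj>p_-$ whp.

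For the upper bound I set $p=p_+:=\frac{(j+1)\log n+\log\log n+\omega}{(k-j+1)n^{k-j}}(k-j)!$ and substitute into the first-moment formula derived in the proof of Lemma~\ref{lem:meanconc},
\[
\EE(\varMjmin)=(1+o(1))\binom{n}{k+1}\binom{k+1}{j}\,p\,(1-p)^{(k-j+1)\binom{n-j-1}{k-j}}.
\]
The polynomial prefactor contributes $\Theta(n^{j+1}\log n)$ while the exponential factor contributes $\Theta\bigl(n^{-j-1}(\log n)^{-1}e^{-\omega}\bigr)$, so $\EE(\varMjmin)=\Theta(e^{-\omega})=o(1)$. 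Markov's inequality then gives that whp $\cG_{p_+}$ contains no copy of $\Mjmin$, and therefore no copy of $\Mj$ by~\eqref{eq:Mj*implies}. Letting $p^*$ denote the largest birth time $\le p_+$ (almost surely strictly less than $p_+$, and whp strictly greater than $\bar p_j$ since the expected number of births in $(\bar p_j,p_+]$ tends to infinity), we have $\cG_{p^*}=\cG_{p_+}$ free of $\Mj$, hence $\pstMj\le p^*<p_+$. Lemma~\ref{lem:localobstacle} provides $\pMj=\pstMj$ whp, and so $\pMj<p_+$ whp.

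The main conceptual obstacle, already defused by Lemma~\ref{lem:localobstacle}, is the non-monotonicity of the event $\{\Mj\subset\cG_p\}$ in $p$: one cannot directly conclude a bound on the supremum $\pMj$ from a statement at a single value of $p$. The identity $\pMj=\pstMj$ whp is precisely what converts the supercritical half of the argument into a clean statement about the fixed deterministic time $p_+$, after which everything reduces to plugging the chosen values of $p$ into estimates already established in this section.
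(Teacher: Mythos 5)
Your proposal is correct and follows essentially the same route as the paper: both reduce to $\omega=o(\log n)$, obtain the lower bound from Lemma~\ref{lem:meanconc} combined with Lemma~\ref{lem:jshells} (upgrading a surviving $\Mjmin$ to an $\Mjst$ and hence an $\Mj$), and obtain the upper bound from the first-moment estimate~\eqref{eq:expMjminus} at $p_+$ giving $\Theta(e^{-\omega})=o(1)$ copies of $\Mjmin$, followed by Lemma~\ref{lem:localobstacle} to pass from $\pstMj$ to $\pMj$. Your additional care about strict inequalities and the largest birth time below $p_+$ is a harmless elaboration of what the paper leaves implicit.
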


\begin{proof}
  We may assume without loss of generality that $\omega=o(\log n)$. By
  Lemmas~\ref{lem:jshells} and~\ref{lem:meanconc}, $\pMj >
  \frac{(j+1)\log n+\log\log n-\omega}{(k-j+1)n^{k-j}}(k-j)!$ whp. On
  the other hand, setting $p =
  \frac{(j+1)\log n + \log\log n + \omega}{(k-j+1) n^{k-j}}(k-j)!$ and
  arguing as in Lemma~\ref{lem:meanconc} (see~\eqref{eq:expMjminus}),
  the expected number of copies of $\Mjmin$ is bounded from above by 
  \begin{align*}
    (1+o(1))&n^{k+1}p\exp\left(-\frac{(n-j-1)^{k-j}}{(k-j)!}(k-j+1)p\right)\\
    &= \Theta \left( n^{j+1}\log n\exp\left(-(j+1)\log n-\log\log n
    -\omega \right) \right)\\
    &= \Theta \left( e^{-\omega} \right) = o(1).
 \end{align*}
  So by Markov's inequality, whp there are no copies of $\Mjmin$ and
  thus also no copies of $\Mj$ in $\cG_p$, i.e.
  \begin{equation*}
    \pstMj < \frac{(j+1)\log n+\log\log n+\omega}{(k-j+1)n^{k-j}}(k-j)!
  \end{equation*}
  and by Lemma~\ref{lem:localobstacle} we have $\pstMj  = \pMj$ whp.
\end{proof}

\subsection{Covering the intervals: proof of
  Lemma~\ref{lem:phantomlemma}}\label{sec:subcrit:covering}

In order to prove Lemma~\ref{lem:phantomlemma}, we show that for each
$j\in[k-1]$, whp there exist three minimal obstructions which survive
throughout each of the intervals $[\pjmom,\pjone]$, $[\pjone,\pjmin]$
and $[\pjone,\pMj)$, respectively. 

Recall that 
\begin{equation}\label{eq:pjone}
  \pjone=\displaystyle \frac{1}{10(j+1)\binom{k+1}{j+1} n^{k-j}}.
\end{equation}  
The first step is to show that at least one of the $\varMjst =
\Theta((\log n)^{j+2})$ copies of $\Mjst$ which are present whp at
probability $\pjmom$ (Lemma~\ref{lem:secondmoment}) survives until
time $\pjone$. To do so, we will count the number of \emph{dangerous
sets}, that is $(k+1)$-sets which, if they are selected as
$k$-simplices, make one or more of copies of $\Mjst$ disappear. Then
we show that whp up to probability $\pjone$ the number of copies of
$\Mjst$ destroyed by dangerous sets which became $k$-simplices is less
than $\varMjst$.

\begin{lem}\label{lem:pj-1pjone}
  With high probability one copy of $\Mjst$ exists in $\cG_p$
  throughout the range $[\pjmom,\pjone]$.
\end{lem}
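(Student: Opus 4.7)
I would combine Lemma~\ref{lem:secondmoment} with a survival-probability analysis. Call a copy $(K,C,w,a)$ of $\Mjst$ present in $\cG_{\pjmom}$ a \emph{survivor} if it remains a copy of $\Mjst$ for every $p\in[\pjmom,\pjone]$, and let $Y$ count the survivors. Conditions~\ref{Mjst:simplex} and~\ref{Mjst:shell} of Definition~\ref{def:mjstar} are monotone increasing in $p$, so they persist automatically; survival fails precisely when some $(k+1)$-set distinct from $K$ that contains at least one of the $k-j+1$ petals of $\mathcal{F}(K,C)$ becomes a $k$-simplex during $(\pjmom,\pjone]$. I call such $(k+1)$-sets \emph{dangerous} for the copy. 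Since the petals pairwise intersect exactly in $C$, a short inclusion-exclusion shows that the total number of dangerous sets equals $(1+o(1))(k-j+1)\binom{n-j-1}{k-j}$, with $(k+1)$-sets containing two or more petals accounting for only $O(n^{k-j-1})$ terms.

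Conditional on $(K,C,w,a)$ being a copy of $\Mjst$ in $\cG_{\pjmom}$, condition~\ref{Mjst:flower} ensures that none of its dangerous sets is yet a $k$-simplex at time $\pjmom$, so each becomes a $k$-simplex during $(\pjmom,\pjone]$ independently with probability $(\pjone-\pjmom)/(1-\pjmom)=(1+o(1))\pjone$. Hence the conditional survival probability equals
\[
(1+o(1))\exp\!\bigl(-\pjone(k-j+1)\tbinom{n-j-1}{k-j}\bigr)\;=\;\beta+o(1),
\]
where, by the definition of $\pjone$ in~\eqref{eq:pjone}, the constant
\[
\beta:=\exp\!\Bigl(-\tfrac{k-j+1}{10(j+1)\binom{k+1}{j+1}(k-j)!}\Bigr)
\]
is strictly positive. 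Combining this with Lemma~\ref{lem:secondmoment} yields $\EE(Y)=(\beta+o(1))\EE(\varMjst)=\Theta\bigl((\log n)^{j+2}\bigr)$, which tends to infinity.

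To upgrade this expectation into the desired high-probability statement $Y\ge1$, I would apply Chebyshev's inequality after verifying that $\EE(Y^2)=(1+o(1))\EE(Y)^2$. The second-moment expansion mimics the case analysis from the proof of Lemma~\ref{lem:secondmoment}, now weighted by the additional joint-survival factor $(1-(1+o(1))\pjone)^{|D_{T^*_1}\cup D_{T^*_2}|}$ for pairs of potential copies $T^*_1,T^*_2$, where $D_{T^*}$ denotes the set of dangerous $(k+1)$-sets of $T^*$. In the dominant case $s=2$, $i=0$ (disjoint supports), one has $|D_{T^*_1}\cap D_{T^*_2}|=O(n^{k-j-1})$, so the joint-survival probability equals $(1+o(1))\beta^2$ and this case contributes $(1+o(1))\EE(Y)^2$ to $\EE(Y^2)$. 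All remaining $(s,i)$-cases already contributed $o(\EE(\varMjst)^2)$ in Lemma~\ref{lem:secondmoment}'s bookkeeping, and since the extra survival factor is bounded by $1$, they contribute $o(\EE(Y)^2)$ here as well. The main obstacle is the careful tracking of these positive correlations arising from shared dangerous sets; but since $\pjone=O(n^{-(k-j)})$, the correction factor $(1-\pjone)^{-|D_{T^*_1}\cap D_{T^*_2}|}$ equals $1+o(1)$ whenever the overlap is $o(n^{k-j})$, which holds in every relevant case, so Chebyshev's inequality closes the argument.
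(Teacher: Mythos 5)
Your argument is correct, but it follows a genuinely different route from the paper's. The paper does \emph{not} compute a second moment for the survivor count. Instead, it conditions on the event $\tfrac{x}{3}\le\varMjst\le x$ at time $\pjmom$ (from Lemma~\ref{lem:secondmoment} with $x=2\EE(\varMjst)$), bounds the total number of dangerous $(k+1)$-sets over \emph{all} copies by $N:=2n^{k-j}x$, and lets $Y$ count how many of these become $k$-simplices by time $\pjone$. Since $Y$ is dominated by $\Bi(N,\pjone)$ with mean $x/(5c)$, a Chernoff bound gives $Y<x/(3c)$ whp, where $c=\binom{k+1}{j+1}(j+1)$ is (using Remark~\ref{rem:atmostonestar}) the maximum number of copies of $\Mjst$ a single new $k$-simplex can destroy; hence fewer than $x/3\le\varMjst$ copies are destroyed and one survives. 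This destruction-counting argument reuses the already-established concentration of $\varMjst$ and avoids any new variance computation, but it leans on Remark~\ref{rem:atmostonestar} and on the specific constant $\tfrac1{10}$ in the definition of $\pjone$ to make $\EE(Y)=x/(5c)$ safely below $x/(3c)$. Your approach instead computes $\EE(Y)$ for the survivor count directly, correctly exploiting that conditions~\ref{Mjst:simplex} and~\ref{Mjst:shell} are monotone and that, conditional on the copy existing at $\pjmom$, the birth times of its dangerous sets are independent and uniform on $(\pjmom,1]$; this yields a constant conditional survival probability $\beta$, and the second-moment case analysis goes through because the only new correlations come from shared dangerous sets, whose number is $o(n^{k-j})=o(1/\pjone)$ in the dominant disjoint case and whose effect is bounded by $1$ in all other cases. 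Your route is somewhat longer but self-contained (it needs neither Remark~\ref{rem:atmostonestar} nor the precise constant in $\pjone$) and proves the stronger statement that a constant fraction of the copies present at $\pjmom$ survive.
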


\begin{proof}
  Define $x=2\EE(\varMjst)$ at time $p=\pjmom$. By
  Lemma~\ref{lem:secondmoment}, we know that whp
  \begin{equation*}
    \frac{x}{3} \leq \varMjst \leq x,
  \end{equation*}
  so let us condition on this high probability event occurring.

  We know that we can generate $\cG_{\pjone}$ from $\cG_{\pjmom}$ by
  exposing an additional probability of $\frac{\pjone-\pjmom}{1-\pjmom}
  \leq \pjone$, therefore we will use the upper bound $\pjone$ in the
  following calculations. Set $p=\pjone$ and let $Y$ be the number of
  dangerous sets selected as $k$-simplices in $\cG_p$. A $(k+1)$-set
  can contain at most $\binom{k+1}{j+1}$ petals, each of which can be
  part of at most $j+1$ different copies of $\Mjmin$, since by
  definition a petal belongs to exactly one $k$-simplex and within
  this petal we have $\binom{j+1}{j}=j+1$ choices for the centre which
  then uniquely defines the copy of $\Mjmin$. So each of the
  $k$-simplices counted by $Y$ can destroy at most $c:=
  \binom{k+1}{j+1}(j+1)$ copies of $\Mjmin$. Moreover, by
  Remark~\ref{rem:atmostonestar}, whp $c$ is also the maximum number
  of copies of $\Mjst$ that can disappear by adding a dangerous set to
  the complex. Therefore, we now show that
  \begin{equation*}
    \Pr\left(cY \geq \frac{x}{3}\right) = o(1).
  \end{equation*}
  This will imply that whp $cY < \varMjst$, so at least one of the
  copies of $\Mjst$ counted by $\varMjst$ will survive throughout the
  considered probability interval.
  
  A dangerous $(k+1)$-set makes one or more copies of $\Mjst$
  disappear if it becomes a $k$-simplex and contains at least one
  petal of each of their flowers. For a copy of $\Mjst$, the number of
  $(k+1)$-sets that intersect it in at least one petal is at most
  $(k-j+1) \binom{n-j-1}{k-j}$. Therefore, whp the number of dangerous
  $(k+1)$-sets is at most 
  \begin{equation*}
    (k-j+1) \binom{n-j-1}{k-j} x \leq \frac{k-j+1}{(k-j)!} n^{k-j} x
    \leq 2 n^{k-j} x =: N.
  \end{equation*}

  Due to the independence of the chosen $k$-simplices, $Y$ is
  dominated by $\mbox{Bi}(N,p)$. Since 
  \begin{equation*}
    \EE(\mbox{Bi}(N,p)) = Np
    \stackrel{\eqref{eq:pjone}}{=} \frac{x}{5c},
  \end{equation*}
  by the Chernoff bound (Lemma~\ref{lem:chernoff}) we have
  \begin{align*}
    \Pr \left( Y \geq \frac{x}{3c}\right) &\leq
    \Pr\left( \mbox{Bi}(N,p)\geq \frac{x}{3c} \right)\\
    &\leq \exp\left(-\frac{\left(\frac{x}{3c}-Np\right)^2}{2\left(Np+\left(\frac{x}{3c}-Np\right)/3\right)}\right)\\
    &= \exp\left(-\frac{2x}{55c}\right) = o(1),
  \end{align*}
  because $x\xrightarrow{n\rightarrow \infty} \infty$ by
  Lemma~\ref{lem:secondmoment}.
\end{proof}
We now consider the second subinterval $[\pjone,\pjmin]$. In this
range, we will show that whp one of the ``many'' copies of $\Mjmin$
which exist whp at time $\pjmin$ (Lemma~\ref{lem:meanconc}) was
already present at the beginning of the interval. Together with the
fact that whp each $\Mjmin$ gives rise to a copy of $\Mj$
(Lemma~\ref{lem:jshells}), this will imply that whp one copy of $\Mj$
exists throughout this interval.

\begin{lem}\label{lem:pj1pjmin}
  With high probability one copy of $\Mjmin$ exists in $\cG_p$
  throughout the range $[\pjone, \pjmin]$.
\end{lem}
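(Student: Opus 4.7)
The plan is to isolate, within the many copies of $\Mjmin$ that exist at time $\pjmin$ by Lemma~\ref{lem:meanconc}, a subfamily whose $k$-simplex was already present at $\pjone$, and then apply a second-moment argument to this subfamily. Formally, I would let $Z$ count the pairs $(K,C)$ (with $K$ a $(k+1)$-subset of $[n]$ and $C$ a $j$-subset of $K$) such that $K$'s birth time is at most $\pjone$ and, at time $\pjmin$, no $k$-simplex other than $K$ contains a petal of the flower $\mathcal{F}(K,C)$. Any pair counted by $Z$ is a copy of $\Mjmin$ in $\cG_p$ for every $p\in[\pjone,\pjmin]$: the simplex $K$ is present throughout the interval, and the ``no other $k$-simplex covers a petal'' requirement only weakens as $p$ decreases from $\pjmin$. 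So it suffices to show $Z\geq 1$ whp.

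For the first moment, the two defining events are independent because they are measurable with respect to disjoint sets of $(k+1)$-sets: the birth time of $K$ versus the presence of $(k+1)$-sets other than $K$ that intersect $K$ in a petal. Thus the calculation is identical to~\eqref{eq:expMjminus}, except that the leading factor $p$ is replaced by $\pjone$ and the exponential factor is evaluated at $\pjmin$. Comparing with the expectation at $\pjmin$,
\begin{equation*}
  \EE(Z) = (1+o(1))\,\EE(\varMjmin)\big|_{p=\pjmin}\cdot\frac{\pjone}{\pjmin}.
\end{equation*}
The ratio $\pjone/\pjmin$ is of order $1/\log n$, while Lemma~\ref{lem:meanconc} applied with $\omega=\log\log n+(j+1)\sqrt{\log n}$ (which tends to infinity and is $o(\log n)$) gives $\EE(\varMjmin)|_{\pjmin}=\Omega(e^{(j+1)\sqrt{\log n}}\log n)$. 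Hence $\EE(Z)\to\infty$.

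For the second moment I would mimic the case analysis in the proof of Lemma~\ref{lem:meanconc}, splitting pairs $(T_1^-,T_2^-)$ by whether $s=|\{K_1,K_2\}|$ is $1$ or $2$. When $s=2$, the birth-time constraints on $K_1$ and $K_2$ are independent and contribute an extra factor $\pjone^2$; only the configurations with disjoint petal sets ($t=2(k-j+1)$) have positive probability, and the counting $(1+o(1))\binom{n}{k+1}^2\binom{k+1}{j}^2$ produces the dominant term $(1+o(1))\EE(Z)^2$. When $s=1$ the birth-time constraint is imposed only once, so the contribution is $O(n^{k+1}\pjone r^{k-j+1})=O(\EE(Z))=o(\EE(Z)^2)$ since $\EE(Z)\to\infty$. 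Combining, $\EE(Z^2)=(1+o(1))\EE(Z)^2$, and Chebyshev's inequality gives $Z=(1+o(1))\EE(Z)$ whp; in particular $Z\geq 1$ whp.

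The main thing to verify carefully is that the birth-time event $\{p_K\le\pjone\}$ really is independent of the petal-coverage event at $\pjmin$, but this is immediate because the latter depends only on the birth times of $(k+1)$-sets different from $K$. Beyond this, the whole argument is a direct adaptation of the moment estimates already carried out for $\varMjmin$ in Lemma~\ref{lem:meanconc}, the only novelty being the insertion of the single extra factor $\pjone/\pjmin$ that reflects conditioning on the early birth of $K$.
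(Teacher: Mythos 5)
Your proof is correct. The paper establishes the same lemma by a shorter conditioning argument rather than a fresh moment computation: it invokes Lemma~\ref{lem:meanconc} to get that whp there are $\Theta\bigl(n^{(j+1)/\sqrt{\log n}}\log n\bigr)$ copies of $\Mjmin$ at time $\pjmin$ arising from distinct $k$-simplices, observes that conditionally on their existence at $\pjmin$ the birth times of these $k$-simplices are independent and uniform on $[0,\pjmin]$, so each copy was already present at $\pjone$ with probability $\pjone/\pjmin=\Theta(1/\log n)$, and concludes that the probability that none survives backwards is at most $\exp\bigl(-\Theta(n^{(j+1)/\sqrt{\log n}})\bigr)=o(1)$. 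Your route instead defines the refined count $Z$ of copies that persist throughout the whole interval and reruns the first- and second-moment calculation of Lemma~\ref{lem:meanconc} with the single factor $p$ replaced by $\pjone$; the independence of $\{p_K\le\pjone\}$ from the petal-coverage event, which you correctly flag as the point to verify, is exactly what the paper exploits in its conditional-uniformity step. The trade-off is that the paper's version reuses the already-proved concentration of $\varMjmin$ and avoids a second case analysis, while yours is more self-contained and yields the slightly stronger conclusion that $Z$ itself is concentrated, i.e.\ that $(1+o(1))\EE(\varMjmin)\cdot\pjone/\pjmin$ copies persist across the interval, not merely one. Both arguments rest on the same quantitative facts, so I would regard yours as a valid, mildly more laborious variant.
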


\begin{proof}
  Set
  \begin{equation*}
    p = \pjmin =
    \left(1-\frac{1}{\sqrt{\log n}}\right)\frac{(j+1)\log n}{(k-j+1)n^{k-j}}(k-j)!.
  \end{equation*}
  By Lemma~\ref{lem:meanconc}, at probability $p$ the number
  $\varMjmin$ of copies of $\Mjmin$ is concentrated around its
  expectation
  \begin{equation*}
    \EE(\varMjmin) \stackrel{\eqref{eq:expMjminus}}{=}
    \Theta\left(n^{k+1}p(1-p)^{(k-j+1)\binom{n-j-1}{k-j}}\right) =
    \Theta\left(n^{\frac{j+1}{\sqrt{\log n}}}\log n\right),
  \end{equation*}
  which is growing with $n$. Note that a fixed $k$-simplex can give
  rise to only $\binom{k+1}{j} = \Theta(1)$ different copies of
  $\Mjmin$. Therefore whp there are
  $\Theta\left(n^{\frac{j+1}{\sqrt{\log n}}}\log n\right)$ many copies
  of $\Mjmin$ that arise from different $k$-simplices, and whose birth
  times are thus independent. Given that these copies exist at time
  $\pjmin$, the birth times of the corresponding $k$-simplices are
  uniformly distributed in the interval $[0,\pjmin]$. The probability
  that any fixed such copy already existed at time $\pjone$ is therefore 
  \begin{equation*}
    \frac{\pjone}{\pjmin} = \Theta\left(\frac{1}{\log n}\right).
  \end{equation*}
  Thus, because of the independence, the probability that none of them
  was present at $\pjone$ is at most
  \begin{equation*}
    \left(1-\Theta\left(\frac{1}{\log n}\right)\right)^{\Theta\left(n^{\frac{j+1}{\sqrt{\log n}}}\log n\right)}
    \leq \exp\left(-\Theta\left(n^{\frac{j+1}{\sqrt{\log n}}}\right)\right)
    = o(1),
  \end{equation*}
  as required.
\end{proof}

We now conclude the argument by covering the third interval
$[\pjmin,\pMj)$ of the subcritical range.

\begin{lem} \label{lem:pjmin-pMj}
  With high probability one copy of $\Mjmin$ exists in $\cG_p$
  throughout the range $[\pjmin, \pMj)$.
\end{lem}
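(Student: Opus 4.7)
The plan is to work directly in the birth-time formulation from Section~\ref{sec:preliminaries:birth}. For each pair $(K,C)$ with $K \in \binom{[n]}{k+1}$ and $C \in \binom{K}{j}$, I would set
\[
  L(K,C) := \min\bigl\{p_{K'} \bigm| K'\ne K,\ K' \supseteq C\cup\{w\} \text{ for some } w \in K\setminus C\bigr\},
\]
so that $(K,C)$ forms a copy of $M_j^-$ in $\cG_p$ precisely when $p_K \le p < L(K,C)$. The key object is the pair $(K^*,C^*)$ that maximises $L(K,C)$ over all $(K,C)$ with $p_K < L(K,C)$. By Lemma~\ref{lem:jshells} together with Corollary~\ref{cor:noMjmin}, whp every copy of $M_j^-$ present at a time $p \ge \pjone$ extends to a copy of $M_j$, while no copy of $M_j^-$ exists past $\pMj$; hence whp $L(K^*,C^*) = \pMj$. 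Since $(K^*,C^*)$ is, by construction, a copy of $M_j^-$ at every $p \in [p_{K^*},\pMj)$, it suffices to show that whp $p_{K^*} \le \pjmin$.

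The central observation I would exploit is that, conditional on $(K^*,C^*)=(K,C)$ and $L(K,C)=\ell$, the birth time $p_K$ is (up to negligible corrections) uniformly distributed on $[0,\ell]$. Firstly, $p_K$ is independent of $L(K,C)$, since the latter only involves birth times of $(k+1)$-sets distinct from $K$. Secondly, for each competing pair $(K',C') \ne (K,C)$ with $K' \ne K$, the constraint $L(K',C') < L(K,C)$ decouples from $p_K$: if $K$ contains a petal of $(K',C')$, then $L(K',C') \le p_K < L(K,C)$ holds automatically from the validity of $(K,C)$; otherwise $L(K',C')$ is independent of $p_K$. The only potential nuisance comes from competing pairs $(K,C')$ with $C' \ne C$ inside the same simplex, which could in principle enforce $p_K \ge L(K,C')$; but a short first-moment calculation analogous to Lemma~\ref{lem:meanconc} shows that whp no $(K,C')$ with $L(K,C') \ge \pMj$ coexists inside the same $K$ as the argmax, so this case contributes only $o(1)$.

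Given this, $\Pr(p_{K^*} \le \pjmin \mid L(K^*,C^*)=\ell) = (1-o(1))\pjmin/\ell$ for $\ell \ge \pjmin$. I would then fix some $\omega = \omega(n) \to \infty$ with $\omega = o(\log n)$ and set $p^+ := \frac{(j+1)\log n + \log\log n + \omega}{(k-j+1)n^{k-j}}(k-j)!$. Corollary~\ref{cor:pMj} yields $\pjmin < \pMj \le p^+$ whp, on which event
\[
  \frac{\pjmin}{L(K^*,C^*)} = \frac{\pjmin}{\pMj} \ge \frac{\pjmin}{p^+} = 1 - O\!\left(\frac{1}{\sqrt{\log n}}\right).
\]
Taking expectations gives $\Pr(p_{K^*} \le \pjmin) = 1-o(1)$, and on this high-probability event the pair $(K^*,C^*)$ is a copy of $M_j^-$ at every $p \in [\pjmin,\pMj)$, as required.

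The main obstacle will be the conditional independence step in the second paragraph: one has to verify carefully that the event $(K^*,C^*)=(K,C)$ places no constraint on $p_K$ beyond $p_K<L(K,C)$. This requires the case split on whether $K$ contains a petal of a competing simplex and, for the subcase of different centres inside the same $K$, a separate auxiliary bound showing that two values $L(K,C), L(K,C')$ are unlikely to both be as large as $\pMj$ simultaneously.
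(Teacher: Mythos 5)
Your argument is correct and is essentially the paper's own proof: the paper also conditions on $\pMj \le (1+o(1))\pjmin$ (via Corollary~\ref{cor:pMj}) and uses the fact that the birth time of the $k$-simplex realising the final minimal obstruction is uniform on $[0,\pMj)$, so that it already existed at $\pjmin$ with probability $\pjmin/\pMj = 1-o(1)$. Your write-up merely makes explicit, via the quantities $L(K,C)$ and the case analysis on competing pairs, the conditional-uniformity step that the paper leaves implicit by referring back to Lemma~\ref{lem:pj1pjmin}.
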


\begin{proof}
  By the definition of $\pjmin$ and Corollary~\ref{cor:pMj}, we know
  that whp $\pjmin = (1 -o(1)) \pMj$. So, conditioning on 
  this high probability event and arguing as in the proof of
  Lemma~\ref{lem:pj1pjmin}, the final minimal obstruction to disappear
  at time $\pMj$ already existed at time $\pjmin$ with probability at
  least 
  \begin{equation*}
    \frac{\pjmin}{\pMj}= 1 - o(1),
  \end{equation*}
  as required.
\end{proof}

\begin{proof}[Proof of Lemma~\ref{lem:phantomlemma}]
  By Lemma~\ref{lem:jshells}, the copies of $\Mjmin$ from
  Lemmas~\ref{lem:pj1pjmin} and~\ref{lem:pjmin-pMj} whp give rise to
  copies of $\Mjst$, and thus in particular to copies of $\Mj$.
  Therefore, Lemmas~\ref{lem:pj-1pjone},~\ref{lem:pj1pjmin}
  and~\ref{lem:pjmin-pMj} together imply Lemma~\ref{lem:phantomlemma}.
\end{proof}

\section{Critical window and supercritical regime} \label{sec:supcrit}

\subsection{Overview}

In this section, we study obstructions around the point of the claimed
phase transition and in the supercritical regime, that is, for $p =
(1+o(1))p_j$ and $p\ge\pMj$, respectively. The results of this section
will form the foundation of the proof of
Theorem~\ref{thm:gentheor}~\ref{thm:gentheor:supercrit}. Furthermore,
they will be an essential ingredient in the proof of
Theorem~\ref{thm:critwindow1}.

By the definition of $\pMj$, there are no copies of $\Mj$ in $\cG_p$
(and whp also no copies of $\Mjmin$ by Corollary~\ref{cor:noMjmin})
for any $p\ge\pMj$. It remains to show that there are no other
obstructions either. In fact, we shall even prove
(Corollary~\ref{cor:supercritical}) that from slightly before $\pMj$
onwards, all $j$-cocycles are generated by copies of $\Mjmin$ (recall
that a $j$-cocycle is a $j$-cochain in $\ker\delta^{j}$, see
Section~\ref{sec:preliminaries:cohomology}). To make this more
precise, we need the following terminology.
\begin{definition}\label{def:generating}
  Let $(K,C)$ be a copy of $\Mjmin$ in a $k$-complex $\mathcal{G}$. We
  say that a $j$-cochain $f_{K,C}$ \emph{arises from $(K,C)$} if its
  support is the $j$-flower $\mathcal{F}(K,C)$. (Observe that
  $f_{K,C}$ is then a $j$-cocycle.)
  
  We say that a $j$-cocycle $f$ in $\mathcal{G}$ is \emph{generated by
  copies of $\Mjmin$} if it lies in the same cohomology class as a sum
  of $j$-cocycles that arise from copies of $\Mjmin$. We denote by
  $\mathcal{N}_{\cG}$ the set of $j$-cocycles that are \emph{not}
  generated by copies of $\Mjmin$.
\end{definition}

Our goal is to show that whp $\mathcal{N}_{\cG_p}=\emptyset$ for
$p\ge\pjmin$ (Corollaries~\ref{cor:noSpsmallp}~and~\ref{cor:supercritical}),
which in particular
will imply that whp each $j$-cocycle in $\cG_p$ is also a
$j$-coboundary (i.e.\ there are no bad functions, see
Definition~\ref{def:badfctn}) for all $p\ge\pMj$. Furthermore, it
will enable us to directly relate the number of copies of $\Mjmin$ to
the dimension of $H^j(\cG_p;\FF_2)$ (cf.\ Theorem~\ref{thm:critwindow1}).

\begin{definition}\label{def:f_p}
  For each $p\in [0,1]$, let $f_p$ be a function in
  $\mathcal{N}_{\cG_p}$ with smallest support $S_p$, if such a
  function exists.
\end{definition}

In order to prove that whp $\mathcal{N}_{\cG_p}$ is empty, we show
(Lemma~\ref{lem:traversable}) that for any $k$-complex $\cG$, a
smallest support of elements of $\mathcal{N}_{\cG}$ (and so in
particular $S_p$ in $\cG_p$) would have to be \emph{traversable} (see
Definition~\ref{def:traversability}). We then show that whp no $\cG_p$
with $p\geq \pjmom$ can contain a traversable support $S_p$. For
``small'' sizes of $S_p$ and $p=(1+o(1))\pj$, basic estimates and a
union bound argument will suffice (Lemma~\ref{lem:smallsupport}); for
larger size, we will make use of traversability to define a
breadth-first search process that finds all possible supports. In this
way, we can bound the number of possibilities for $S_p$ more
carefully, thus allowing us to prove that whp for all relevant $p$
simultaneously, $S_p$ cannot be ``large''
(Lemma~\ref{lem:largesupport}). Finally, we complete the argument
proving that whp no new elements of $\mathcal{N}_{\cG_p}$ with ``small''
support size can appear if we increase $p$ (Lemma~\ref{lem:nomoreMj}).

\subsection{Traversability}

\begin{definition}\label{def:traversability}
  Let $\cG$ be a $k$-complex in which each simplex is contained in a
  $k$-simplex, and let $S$ be a collection of $j$-simplices of $\cG$.
  For $\sigma_1,\sigma_2 \in S$, we set
  \begin{center}
    \begin{tabular}{lcr}
      $\sigma_1 \sim \sigma_2$ & if & $\sigma_1$ and $\sigma_2$ lie in
      a common $k$-simplex.
    \end{tabular}
  \end{center}
  We say that the set $S$ is \emph{traversable} if the transitive
  closure of $\sim$ is $S \times S$.
\end{definition}

In other words, a set of $j$-simplices in such a $k$-complex is
traversable if it \emph{cannot} be partitioned into two non-empty
subsets such that each $k$-simplex (and thus also each
$(j+1)$-simplex) contains $j$-simplices in at most one of the two
subsets.

\begin{lem}\label{lem:traversable}
  Let $\cG$ be a $k$-complex in which each simplex is contained in a
  $k$-simplex, and let $f$ be an element of $\mathcal{N}_{\cG}$ with
  smallest support $S$. Then $S$ is traversable. In particular, $S_p$
  is traversable in $\cG_p$, if it exists, for each $p\in [0,1]$.
\end{lem}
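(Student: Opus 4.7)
The plan is to prove the contrapositive by a minimality argument: if the smallest support $S$ of an element in $\mathcal{N}_{\cG}$ were not traversable, we could split $f$ into two $j$-cocycles with strictly smaller support, each of which would then lie outside $\mathcal{N}_{\cG}$ by minimality, and we would conclude that $f$ itself is generated by copies of $\Mjmin$, a contradiction.

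More concretely, suppose for contradiction that $S$ is not traversable. Then there exists a partition $S=S_1\sqcup S_2$ with both $S_1,S_2$ non-empty and such that no $k$-simplex of $\cG$ contains $j$-simplices from both $S_1$ and $S_2$. Let $f_1,f_2\in C^j(\cG)$ be the functions whose supports are exactly $S_1$ and $S_2$, respectively, so that $f=f_1+f_2$.

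The key observation is that $f_1$ and $f_2$ are themselves $j$-cocycles. Indeed, let $\tau$ be any $(j+1)$-simplex; by the hypothesis that every simplex of $\cG$ lies in a $k$-simplex, there exists a $k$-simplex $K\supseteq\tau$, and hence all $(j+1)$ many $j$-faces of $\tau$ are $j$-simplices of $K$. By the splitting property, at most one of $S_1,S_2$ can contain any of these faces; say all faces of $\tau$ that lie in $S$ lie in $S_1$. Then $\delta^jf_2(\tau)=0$ automatically, and since $f=f_1+f_2$ is a cocycle, $\delta^jf_1(\tau)=\delta^jf(\tau)=0$ as well. As $\tau$ was arbitrary, both $f_1$ and $f_2$ lie in $\ker\delta^j$.

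Now $|\mathrm{supp}(f_i)|=|S_i|<|S|$ for $i=1,2$, so by the minimality of $|S|$ among supports of elements of $\mathcal{N}_{\cG}$, neither $f_1$ nor $f_2$ is in $\mathcal{N}_{\cG}$, i.e.\ each is generated by copies of $\Mjmin$ in the sense of Definition~\ref{def:generating}. Writing $f_i=g_i+\delta^{j-1}h_i$ with $g_i$ a sum of cocycles arising from copies of $\Mjmin$, we obtain $f=(g_1+g_2)+\delta^{j-1}(h_1+h_2)$, so $f$ lies in the same cohomology class as the sum $g_1+g_2$ of cocycles arising from copies of $\Mjmin$. This contradicts $f\in\mathcal{N}_{\cG}$ and proves that $S$ is traversable. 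The final assertion for $S_p$ in $\cG_p$ is immediate, since by construction every simplex of $\cG_p$ is contained in a $k$-simplex (Definition~\ref{def:complexGp}). The only subtle point to verify carefully while writing is the cocycle property of $f_1,f_2$, which is precisely where the hypothesis on $\cG$ enters; everything else is a straightforward minimality argument.
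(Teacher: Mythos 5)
Your proposal is correct and follows essentially the same route as the paper: split a non-traversable $S$ into two parts not linked by any common $k$-simplex, observe that the corresponding restrictions are themselves $j$-cocycles, invoke minimality to conclude each is generated by copies of \Mjmin, and use closure of that property under summation to contradict $f\in\mathcal{N}_{\cG}$. The only (immaterial) slip is that a $(j+1)$-simplex has $j+2$ many $j$-faces, not $j+1$; otherwise your more explicit verification of the cocycle property and of closure under summation just fills in details the paper leaves implicit.
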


\begin{proof}
  Suppose $S$ is not traversable. Then we can find a partition
  $S =  T_1 \mathop{\dot{\cup}} T_2$, with $T_1$ and $T_2$ 
  non-empty such that each
  $(j+1)$-simplex of $\cG$ contains $j$-simplices in at most one of
  the two parts. Define $g_1$ and $g_2$ to be $j$-cochains
  with supports $T_1$ and $T_2$, respectively. By the choice of
  $T_1$ and $T_2$, both $g_1$ and $g_2$ are $j$-cocycles. Moreover,
  neither of them lies in $\mathcal{N}_{\cG}$ by the minimality of
  $S$. As the property of being generated by copies of $\Mjmin$ is
  closed under summation, $f = g_1+g_2$ is generated by copies of
  $\Mjmin$, a contradiction to $f\in\mathcal{N}_{\cG}$.
\end{proof}

\subsection{Small supports}

The following counting argument shows that whp, at around time $\pj$
traversable supports of $j$-cocycles of constant size do not exist.
This implies in particular that $S_p$ (if it exists) has to be
``large''.
\begin{lem}\label{lem:smallsupport}
  For $p= (1+o(1))\pj$ and for any constant $d\geq k-j+2$, with high
  probability there is no $j$-cocycle in $\cG_p$ with traversable
  support of size $s$ with $k-j+2\leq s \leq d$. In particular, with
  high probability either $S_p$ does not exist or $|S_p|>d$.
\end{lem}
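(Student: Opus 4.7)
The plan is a standard first-moment argument. Fix $s$ with $k-j+2 \le s \le d$ and bound the expected number of $j$-cocycles in $\cG_p$ whose support $S$ is traversable and of size $s$; since $d$ is constant, summing over $s$ and applying Markov's inequality finishes the first assertion. To record the combinatorial skeleton of such a support, let $V(S):=\bigcup_{\sigma\in S}\sigma$, put $v:=|V(S)|$, and let $K_1,\dots,K_m$ be the $k$-simplices of $\cG_p$ meeting $S$. Lemma~\ref{lem:minobst} gives $|S\cap K_i|\ge k-j+1$ and $K_i=\bigcup_{\sigma\in S_{K_i}}\sigma\subseteq V(S)$. Traversability forces the auxiliary graph on $\{K_1,\dots,K_m\}$ with edges $K_i\sim K_{i'}$ whenever $S_{K_i}\cap S_{K_{i'}}\neq\emptyset$ to be connected (otherwise $S$ would split into two non-empty parts with no pair in a common $k$-simplex). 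Since every such edge records a shared $(j+1)$-subset, induction along a spanning tree yields
\begin{equation*}
  v \;\le\; (k+1)+(m-1)(k-j).
\end{equation*}

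The crucial probabilistic input is a forbidden pattern forced by the cocycle condition. For any $\sigma\in S$ and $u\notin V(S)$, the $(j+1)$-set $\sigma\cup\{u\}$ meets $S$ exactly in $\{\sigma\}$, which has odd size; hence $\sigma\cup\{u\}$ cannot be a $(j+1)$-simplex of $\cG_p$, so \emph{no} $(k+1)$-set containing $\sigma$ and some vertex outside $V(S)$ may be a $k$-simplex. An inclusion-exclusion over the constantly many $\sigma\in S$ shows that the number $F$ of such forbidden $(k+1)$-sets satisfies $F\ge s\binom{n-j-1}{k-j}(1-o(1))$. A fixed labelled skeleton with parameters $(v,m,s)$ is realised with probability at most $p^m(1-p)^F$, and there are $O(n^v)$ such skeletons, so the expected count is at most
\begin{equation*}
  n^v\, p^m\, \exp(-pF) \;\le\; O\bigl((\log n)^d\bigr)\cdot n^{j+1}\cdot n^{-\frac{s(j+1)}{k-j+1}(1-o(1))} \;=\; o(1),
\end{equation*}
where the middle inequality uses $v-m(k-j)\le j+1$ together with $p\binom{n-j-1}{k-j}=(1+o(1))\frac{(j+1)\log n}{k-j+1}$ arising from $p=(1+o(1))p_j$. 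The exponent is strictly negative precisely when $s\ge k-j+2$, so summing over the finitely many values of $s$ in $[k-j+2,d]$ yields the main assertion.

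The "in particular" conclusion follows by combining the above with earlier results: by Lemma~\ref{lem:traversable} the support $S_p$ (if it exists) is traversable, and by Lemma~\ref{lem:minobst} $|S_p|\ge k-j+1$. Equality would force $S_p$ to be a single flower $\mathcal{F}(K,C)$, and the same cocycle-forbidding argument as above, applied to each petal, would show that $(K,C)$ is a copy of $\Mjmin$; but then $f_p$ would arise from this copy of $\Mjmin$, contradicting $f_p\in\mathcal{N}_{\cG_p}$. Hence $|S_p|\ge k-j+2$, and the first-moment bound forces $|S_p|>d$ whp. The main obstacle is a tight balancing act: the crude bound $n^v$ overshoots by a factor of $n^{j+1}$, and the proof succeeds only because the cocycle-forbidden factor $\exp(-pF)=n^{-\Theta(s)}$ absorbs this surplus; this compensation becomes effective precisely at the threshold $s=k-j+2$, which is exactly why copies of $\Mjmin$ (with support size $k-j+1$) remain essentially the only obstruction at this range of $p$.
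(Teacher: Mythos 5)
Your proposal is correct and follows essentially the same route as the paper's proof: a first-moment bound over traversable supports, using the vertex bound $v\le j+1+m(k-j)$ (obtained there from $v\le j+1+(k-j)\ell$), the observation from Lemma~\ref{lem:minobst} that every $(k+1)$-set meeting $S$ in one $j$-simplex and otherwise avoiding $V(S)$ is forbidden, and the resulting exponent $j+1-\frac{s(j+1)}{k-j+1}<0$ for $s\ge k-j+2$. The only differences are cosmetic: you rederive the forbidden-pattern fact rather than citing Lemma~\ref{lem:minobst} directly, and you spell out the exclusion of the case $|S_p|=k-j+1$ (which the paper defers to the definition of $\mathcal{N}_{\cG_p}$ and Lemma~\ref{lem:minobst} in Corollary~\ref{cor:noSpsmallp}).
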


\begin{proof}
  Consider a traversable support $S$ of a $j$-cocycle of size $s$
  with $k-j+2\leq s \leq d$.
  Suppose that $S$ covers $v$ vertices and denote by $\ell$ the number
  of $k$-simplices that make $S$ traversable. These quantities are
  easily bounded by
  \begin{equation}\label{eq:boundl}
    \frac{s}{\binom{k+1}{j+1}} \leq \ell \leq s \le d,
  \end{equation}
  and
  \begin{equation}\label{eq:boundv}
    v \leq j+1+(k-j)\ell.
  \end{equation}

  We know by Lemma~\ref{lem:minobst} that if a $k$-simplex contains a
  $j$-simplex in $S$, then all its $k+1$ vertices are covered by $S$.
  Therefore, all $s \binom{n-v}{k-j}$ many $(k+1)$-sets consisting of
  the vertices of one $j$-simplex in $S$ and $k-j$ vertices not
  covered by $S$ cannot be $k$-simplices in $\cG_p$. Thus, the
  probability that a fixed such $S$ exists is at most
  \begin{align*}
    p^\ell (1-p)^{s\binom{n-v}{k-j}} &= p^\ell
    (1-p)^{s \left(\frac{n^{k-j}}{(k-j)!} + O(n^{k-j-1})\right)}\\
    &= O\left(\left(\frac{\log n}{n^{k-j}}\right)^\ell
    \exp\left(-\frac{s(j+1)}{k-j+1}\log n + o(\log n) \right)\right)\\
    &= O\left(n^{-\ell(k-j) - \frac{s(j+1)}{k-j+1} + o(1)}
    \left( \log n \right)^\ell\right). 
  \end{align*}
  Denote by $E_{s,v,\ell}$ the event that a traversable support $S$
  with parameters $s$, $v$, and $\ell$ exists. There are $O(n^v)$
  different ways of choosing $S$, thus
  \begin{equation*}
    \Pr(E_{s,v,\ell}) =
    O\left(n^{v-\ell(k-j)-\frac{s(j+1)}{k-j+1}+o(1)}
    (\log n)^\ell\right).
  \end{equation*}

  Using~\eqref{eq:boundv} and the fact that $s\geq k-j+2$, we obtain
  \begin{equation*}
    v-\ell(k-j) - \frac{s(j+1)}{k-j+1} + o(1) \le
    -\frac{j+1}{k-j+1} + o(1) \le -\frac{j}{k-j+1}
  \end{equation*}
  and thus
  \begin{equation*}
    \Pr(E_{s,v,\ell}) = o(1).
  \end{equation*}
  Finally, observe that by~\eqref{eq:boundl} and~\eqref{eq:boundv},
  there is only a constant number of possible values for $s$, $v$, and
  $\ell$. Therefore, the probability that any such support $S$ exists
  is $o(1)$, as required.
\end{proof}
Note that a similar argument also works for $s$ up to
$O\left(\frac{\log n}{\log \log n}\right)$, but we only need it for
constant size, since we will cover the range between constant size and
size $O\left(\frac{\log n}{\log \log n}\right)$ with a different
argument that we use for all large $s$.

\subsection{Large supports}

For larger support sizes, the previous calculations do not work
anymore and we will need a more careful technique for bounding the
number of possible supports, namely a breadth-first search process. We
will also make use of the following proposition due to Meshulam and
Wallach~\cite{MeshulamWallach08}.

\begin{prop}[{\cite[Proposition~3.1]{MeshulamWallach08}}]\label{prop:meshwal}
  Let $\Delta$ be the downward-closure of the $(n-1)$-simplex on
  vertex set $[n]$, where $n\ge j+2$. For $f\in C^j(\Delta)$, define
  $w(f)$ to be the smallest size of a support of a $j$-cochain of the
  type $f+\delta^{j-1}g$, where $g\in C^{j-1}(\Delta)$. Furthermore,
  denote by $b(f)$ the size of the support of $\delta^jf$, i.e.\ the
  number of $(j+1)$-simplices in $\Delta$ containing an odd number of
  $j$-simplices of the support in $f$. Then
  \begin{equation*}
    b(f) \geq \frac{w(f)n}{j+2}.
  \end{equation*}
\end{prop}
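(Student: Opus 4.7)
The plan is to use a vertex-restriction averaging argument in the spirit of Meshulam and Wallach. For each $v\in[n]$ I will construct a cohomologous representative $f_v$ of $f$ that automatically vanishes on every $j$-simplex containing $v$, and then double-count the contributions of $(j+1)$-simplices in the support of $\delta^j f$ to the sum $\sum_v |\operatorname{supp}(f_v)|$.

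Concretely, for each $v\in[n]$ I would define $g_v\in C^{j-1}(\Delta)$ by $g_v(\tau):=f(\tau\cup\{v\})$ whenever $v\notin\tau$ and $g_v(\tau):=0$ otherwise, and set $f_v := f + \delta^{j-1}g_v$. The key computation, carried out modulo $2$, is the following. For a $j$-simplex $\sigma$ with $v\in\sigma$, the unique $(j-1)$-face of $\sigma$ not containing $v$ is $\sigma\setminus\{v\}$, so $(\delta^{j-1}g_v)(\sigma) = g_v(\sigma\setminus\{v\}) = f(\sigma)$, giving $f_v(\sigma)=0$. For a $j$-simplex $\sigma$ with $v\notin\sigma$, every $(j-1)$-face of $\sigma$ avoids $v$, so $(\delta^{j-1}g_v)(\sigma) = \sum_{u\in\sigma} f\bigl((\sigma\setminus\{u\})\cup\{v\}\bigr)$; comparing this with the expansion $(\delta^j f)(\sigma\cup\{v\}) = f(\sigma) + \sum_{u\in\sigma} f\bigl((\sigma\setminus\{u\})\cup\{v\}\bigr)$ yields $f_v(\sigma) = (\delta^j f)(\sigma\cup\{v\})$.

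Consequently $|\operatorname{supp}(f_v)|$ equals the number of $(j+1)$-simplices in $\operatorname{supp}(\delta^j f)$ that contain $v$. Since $f_v$ is exactly of the form $f+\delta^{j-1}g$ admitted by the definition of $w$, we have $|\operatorname{supp}(f_v)|\ge w(f)$. Summing over $v\in[n]$ and counting each $\sigma'\in\operatorname{supp}(\delta^j f)$ exactly $|\sigma'|=j+2$ times gives
$$n\cdot w(f) \;\le\; \sum_{v\in[n]} |\operatorname{supp}(f_v)| \;=\; (j+2)\,b(f),$$
which rearranges to the claimed inequality $b(f)\ge w(f)n/(j+2)$.

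The main obstacle is the coboundary identity $f_v(\sigma)=(\delta^j f)(\sigma\cup\{v\})$ for $v\notin\sigma$: the rest of the argument is essentially bookkeeping, but this step has to be handled cleanly over $\FF_2$, keeping track of which $(j-1)$-faces of $\sigma$ contain $v$ and which do not. Once this identity is verified, both the invocation of the definition of $w$ and the double counting of $(j+1)$-simplices through a vertex are immediate; the hypothesis $n\ge j+2$ enters only to guarantee that $(j+1)$-simplices exist in $\Delta$ at all.
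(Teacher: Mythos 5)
Your argument is correct, and the key identity $f_v(\sigma)=(\delta^j f)(\sigma\cup\{v\})$ for $v\notin\sigma$ indeed holds: the only point to be careful about is that over $\FF_2$ the $f(\sigma)$ terms cancel in the comparison with the expansion of $(\delta^j f)(\sigma\cup\{v\})$, which you handle correctly. Note that this paper does not supply a proof of the proposition --- it is imported verbatim from Meshulam and Wallach --- so there is no in-paper argument to compare against; but your construction $g_v(\tau)=f(\tau\cup\{v\})$ (a contraction against the vertex $v$), the observation that $f_v$ vanishes on all $j$-simplices through $v$ and equals the contraction of $\delta^j f$ elsewhere, and the double count of $\operatorname{supp}(\delta^j f)$ over vertices is precisely the Meshulam--Wallach averaging argument. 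So you have reconstructed the cited proof, not merely an alternative to it.
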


In the next lemma we show that whp in the supercritical range, a 
smallest support of elements of $\mathcal{N}_{\cG_p}$ cannot be
``large''.
 
\begin{lem}\label{lem:largesupport}
  There exists a positive constant $\bar{d}$ such that with high
  probability for all $p\geq \pjmin$, either $S_p$ does not exist or
  $|S_p| < \bar{d}$.
\end{lem}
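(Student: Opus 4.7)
The plan is to argue by contradiction: assume $S_p$ exists with $|S_p|\ge\bar d$ for a large constant $\bar d$ (to be determined in terms of $j,k$), and show that the expected number of such traversable supports, summed over $s=|S_p|\ge\bar d$, is $o(1/n^{k+1})$. Since $\cG_p$ changes at only finitely many birth times, a final union bound over the at most $\binom{n}{k+1}$ relevant birth times in $[\pjmin,1]$ then completes the argument. By Lemma~\ref{lem:traversable}, $S_p$ is traversable, and we may parametrise it by $s=|S_p|$, the number $\ell=\Theta(s)$ of $k$-simplices witnessing traversability, and the number $v\le j+1+(k-j)\ell$ of vertices covered.

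For the enumeration, I would use a breadth-first search starting from a root $j$-simplex ($\binom{n}{j+1}$ choices) and at each step appending a new $k$-simplex that attaches to the current part via a shared $j$-simplex (at most $s\cdot\binom{n}{k-j}\cdot 2^{\binom{k+1}{j+1}}$ choices: attaching $j$-simplex, new vertices, and new $j$-subsimplices of $S$). This yields an upper bound of $n^{j+1}(Cs\,n^{k-j})^{\ell}$ traversable supports with parameters $(s,\ell)$, where $C=C(k,j)$ is a constant absorbing the subsimplex-identity factor.

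For the probabilistic bound, the $\ell$ prescribed $k$-simplices contribute $p^{\ell}$, which combines with the enumeration factor $n^{(k-j)\ell}$ to give $(Cs\log n)^{\ell}$. The crucial suppression comes in two flavours. First, Proposition~\ref{prop:meshwal} applied to the zero-extension $f'_p\in C^j(\Delta)$ yields $b(f'_p)\ge w_\Delta(f'_p)\,n/(j+2)\ge n/(j+2)$; here $w_\Delta(f'_p)\ge 1$ because $f_p\in\mathcal{N}_{\cG_p}$ is not a $j$-coboundary in $\cG_p$, and, by downward-closure of $\cG_p$, any $g\in C^{j-1}(\Delta)$ with $\delta^{j-1}g=f'_p$ would restrict via $g|_{\cG_p}$ to a $(j-1)$-cochain whose coboundary is $f_p$, contradicting the choice of $f_p$. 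This gives $\Omega(n^{k-j+1})$ forbidden $(k+1)$-sets (after deducting the overcount $\binom{k+1}{j+1}$ of $(j+1)$-subsets per $(k+1)$-set) and suppression $n^{-\Omega(n)}$. Second, by Lemma~\ref{lem:minobst}, any $(k+1)$-set extending a $j$-simplex of $S$ by at least one vertex outside $V(S)$ is also forbidden; this contributes $\Omega(s\,n^{k-j})$ further forbidden $(k+1)$-sets and an extra suppression of $n^{-\Omega(s)}$, which becomes essential once $s$ grows with $n$.

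Combining, the expected count of traversable supports with parameters $(s,\ell)$ is at most $n^{j+1}(Cs\log n)^{\ell}\cdot n^{-\Omega(n)-\Omega(s)}$. For $s=o(n)$ the first suppression dominates the polynomial cost and one may take $\bar d$ to be a sufficiently large absolute constant; for $s=\Omega(n)$ the second suppression (whose constant is tuneable through the ratio $v/n$) absorbs the enumeration cost. Summing over $(s,\ell)$ with $s\ge\bar d$ and over the $O(n^{k+1})$ relevant birth times in $[\pjmin,1]$ then finishes the argument. The main obstacle is engineering the constants so that in the intermediate regime $s\asymp n$ the combined suppression still dominates the $\Theta(s\log n)$ contribution from the BFS enumeration; both forbidden-set counts need to be tracked carefully, and in particular the multiplicity $\binom{k+1}{j+1}$ of $(j+1)$-subsets per $(k+1)$-set must be accounted for consistently in each of the two counts.
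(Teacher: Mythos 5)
Your overall architecture matches the paper's (traversability, a breadth-first enumeration of candidate supports, suppression from forbidden $(k+1)$-sets, and a final union bound over the $O(n^{k+1})$ birth times), but the suppression step has a genuine gap. Your first source of suppression applies Proposition~\ref{prop:meshwal} with only $w(f_p')\ge 1$, which gives $b(f_p')\ge n/(j+2)$ forbidden $(j+2)$-sets and hence only $\Theta(n\cdot n^{k-j-1})=\Theta(n^{k-j})$ forbidden $(k+1)$-sets, not $\Omega(n^{k-j+1})$ as you claim; since $p=\Theta(\log n/n^{k-j})$, this yields suppression $(1-p)^{\Theta(n^{k-j})}=n^{-\Theta(1)}$, a \emph{constant} power of $n$ rather than $n^{-\Omega(n)}$. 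The paper's key point, which you miss, is that $w(f_p)=|S_p|=s$: whp the $(j-1)$-skeleton of $\cG_p$ is complete for $p\ge\pjmin$, so any $f_p+\delta^{j-1}g$ with $g\in C^{j-1}(\Delta)$ lies in the same cohomology class as $f_p$, hence in $\mathcal{N}_{\cG_p}$, and minimality of $S_p$ forces its support to have size at least $s$. This upgrades the count to $\Omega(sn^{k-j})$ forbidden $(k+1)$-sets and the suppression to $n^{-\Omega(s)}$, \emph{uniformly in the vertex count $v$}. Your second source of suppression (Lemma~\ref{lem:minobst}: extend a $j$-simplex of $S$ by a vertex outside $V(S)$) cannot substitute for this, because it yields $s\binom{n-v}{k-j}$ forbidden sets, which degenerates to essentially nothing once $v$ approaches $n$ --- exactly the regime of large $s$ where you declare it ``essential''. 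This is the argument of Lemma~\ref{lem:smallsupport}, and it does not extend to all $s$.

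A secondary but still substantive issue is your enumeration bound $n^{j+1}(Cs\,n^{k-j})^{\ell}$: the factor $s^{\ell}$ (from choosing the attaching $j$-simplex at each step) is too lossy once $s$ is polynomial in $n$. Even with the correct $n^{-\Omega(s)}$ suppression and $\ell\le s$, the combined bound is of order $(Cs\log n\cdot n^{-\alpha})^{s}$, which is only small for $s\lesssim n^{\alpha}$; but $s$ can be as large as $\binom{n}{j+1}$. The paper avoids this by processing the $j$-simplices of $S_p$ in a predetermined order and choosing the $b_i$ new $k$-simplices hanging off the $i$-th one as an unordered set, giving $\prod_i\binom{\binom{n}{k-j}}{b_i}$; the resulting $\prod_i b_i!$ in the denominator is then controlled by splitting the sequences $(b_1,\dots,b_s)$ according to how many $b_i$ exceed $n^{\alpha_2/2}$. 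Both the $w(f_p)=s$ identification and this sharper enumeration are needed to cover all $s\ge\bar d$ simultaneously.
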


\begin{proof}
  Write $s := |S_p|$. By Lemma~\ref{lem:traversable}, $S_p$ (if it
  exists) is traversable and thus we can discover it via the following
  breadth-first search process: start from any $j$-simplex in $S_p$
  and query all $(k+1)$-sets containing it. Since $S_p$ is the support
  of the $j$-cocycle $f_p$, any of these sets which forms a
  $k$-simplex must contain at least one other $j$-simplex in $S_p$.
  From all $j$-simplices in $S_p$ found in this way, we can continue
  the process according to some pre-determined order of $j$-simplices,
  but we explore only $(k+1)$-sets which would give us some previously
  undiscovered $j$-simplex in $S_p$. By the traversability of $S_p$,
  we discover all of $S_p$ in this process. 

  Let us bound the number of traversable supports of size $s$ which
  are contained in $\ell\le s$ many $k$-simplices
  (recall~\eqref{eq:boundl}), which we can find via the described
  search process. Define the sequence $\underline{b} =
  (b_1,\ldots,b_s)$, where $b_i\geq 0$ is the number of
  $k$-simplices we discover from the $i$-th $j$-simplex in this
  process. From the $i$-th $j$-simplex we may query up to
  $\binom{n}{k-j}$ many $(k+1)$-sets and for each of the $b_i$
  discovered $k$-simplices we can find at most $\binom{k+1}{j+1}-1$
  new $j$-simplices of the support, so this can happen in at most
  $\binom{\binom{n}{k-j}}{b_i}2^{\binom{k+1}{j+1}b_i}$ different ways.
  Thus, if we condition on the sequence $\underline{b}$, the number of
  supports of size $s$ we can find is bounded from above by
  \begin{equation*}
    \binom{n}{j+1} \prod_{i=1}^{s}\binom{\binom{n}{k-j}}{b_i}
    2^{\binom{k+1}{j+1}b_i} \le n^{j+1}\frac{\left(\binom{n}{k-j}
    2^{\binom{k+1}{j+1}}\right)^\ell }{\prod_{i=1}^{s} b_i!},
  \end{equation*}
  where we are using that $\sum_{i=1}^s b_i = \ell$.

  In order to apply Proposition~\ref{prop:meshwal} to $f_p$ (which is
  possible, because $\cG_p$ is a sub-complex of $\Delta$), let us
  determine the value $w(f_p)$. First observe that for $p\ge\pjmin$,
  whp $\cG_p$ has a complete $(j-1)$-dimensional skeleton, which can
  be proved by a simple first moment calculation. Thus, if we consider
  $f_p+\delta^{j-1}g$ with $g\in C^{j-1}(\Delta)$, then whp also $g\in
  C^{j-1}(\cG_p)$ and thus $f_p+\delta^{j-1}g$ lies in the same
  cohomology class of $H^j(\cG_p;\FF_2)$ as $f_p$. By the minimality
  of $S_p$, this implies that $w(f_p) = |S_p| = s$ whp. For the rest
  of the proof, let us condition on this high probability event.

  Now Proposition~\ref{prop:meshwal} tells us that at least
  $\frac{sn}{j+2}$ many $(j+2)$-sets would form odd $(j+1)$-simplices
  if they were present in $\cG_p$. The fact that $f_p$ is a
  $j$-cocycle implies that no such $(j+2)$-set is allowed to be in a
  $k$-simplex. Each $(j+2)$-set is contained in $\binom{n-j-2}{k-j-1}$
  many $(k+1)$-sets, each of which contains $\binom{k+1}{j+2}$ many 
  $(j+2)$-sets. Therefore the number of $(k+1)$-sets that cannot be
  chosen as $k$-simplices in $\cG_p$ is at least
  \begin{equation*}
    \frac{sn \binom{n-j-2}{k-j-1}}{(j+2) \binom{k+1}{j+2}} \ge
    \alpha_0 s n^{k-j} \geq \alpha_0 \ell n^{k-j},
  \end{equation*}
  for some constant $\alpha_0=\alpha_0(k,j)>0 $. Thus, the probability
  that a fixed support exists together with the $\ell$ many
  $k$-simplices that make it traversable, but that no odd
  $(j+1)$-simplices are present is at most
  \begin{equation*}
    \left(p(1-p)^{\alpha_0 n^{k-j}} \right)^\ell.
  \end{equation*}
  The derivative of this expression with respect to $p$ is negative
  throughout the range $p\ge\pjmin$, therefore in the following
  calculations involving $p$ we can use the lower bound $\pjmin$.
  Given the sequence $\underline{b}$, the probability
  $q_{\underline{b}}$ that some such support exists and that the
  connecting $k$-simplices have no odd $(j+1)$-simplices satisfies
  \begin{align*}
    q_{\underline{b}} \prod_{i=1}^{s} b_i!
    &\le n^{j+1}\left(2^{\binom{k+1}{j+1}}\binom{n}{k-j}p(1-p)^{\alpha_0 n^{k-j}}\right)^\ell\\
    &\le n^{j+1}\left(2^{\binom{k+1}{j+1}}\frac{(j+1)\log n}{k-j+1}e^{-(1-o(1))\alpha_0\frac{j+1}{k-j+1}(k-j)!\log n}\right)^\ell\\
    &\le n^{j+1}\left(n^{-\alpha_0\frac{j}{k-j+1}(k-j)!}\right)^\ell\\
    &\le n^{j+1}n^{-\alpha_1\ell} \le n^{-\frac{\alpha_1}{2}\ell},
  \end{align*} 
  where $\alpha_1 = \alpha_1(k,j)>0$ and the last inequality holds for
  $\ell \ge \frac{2(j+1)}{\alpha_1}$. Moreover, since $\ell \ge
  \frac{s}{\binom{k+1}{j+1}}$, we can find another positive constant
  $\alpha_2$ such that 
  \begin{equation}\label{eq:pb}
    q_{\underline{b}} \prod_{i=1}^{s} b_i! \leq n^{-\alpha_2 s}.
  \end{equation}

  For each sequence $\underline{b}=(b_1,\ldots,b_s)$ define
  \begin{equation*}
    t(\underline{b}) := |\{ i : b_i \geq n^{\alpha_2/2} \} |
  \end{equation*}
  and let $B_t$ be the set of all sequences $\underline{b}$ such that
  $t(\underline{b})=t$. We can crudely bound $|B_t|$, the number of
  sequences in $B_t$, by
  \begin{equation*}
    s^t \binom{n}{k-j}^t (n^{\alpha_2/2})^{s-t}.
  \end{equation*}
  On the other hand, if $\underline{b} \in B_t$, then
  \begin{equation*}
    \prod_{i=1}^{s} b_i!
    \ge \left(\left(n^{\alpha_2/2}\right)!\right)^t
    \ge \left(\left(n^{\alpha_2/2}\right)^{n^{\alpha_2/3}}\right)^t
    \ge n^{t n^{\alpha_2/4}}.
  \end{equation*}
  Summing over all possible sequences $\underline{b}$, we obtain
  \begin{align} \label{eq:sequence}
    \sum_{\underline{b}} \frac{1}{\prod_{i=1}^{s} b_i!} &=
    \sum_{t=0}^s\sum_{\underline{b}\in B_t}\frac{1}{\prod_{i=1}^{s}b_i!} \nonumber\\
    &\le \sum_{t=0}^s\frac{s^t \binom{n}{k-j}^t (n^{\alpha_2/2})^{s-t}}{n^{t n^{\alpha_2/4}}} \nonumber\\
    &= n^{\alpha_2s/2}\sum_{t=0}^{s}\left(\frac{s \binom{n}{k-j}}{n^{\alpha_2/2} n^{n^{\alpha_2/4}} } \right)^t \nonumber\\
    &\le (s+1)n^{\alpha_2s/2}.
  \end{align}
  Combining~\eqref{eq:pb} and~\eqref{eq:sequence}, the probability
  that some support of fixed size $s$ exists is at most
  \begin{equation*}
    (s+1) n^{\alpha_2s/2} n^{-\alpha_2 s} \leq n^{-\alpha_2 s /3}.
  \end{equation*}
  Let $\bar{d}> \frac{4(k+1)}{\alpha_2}$ be a constant. If we sum over
  all $s \geq \bar{d}$, we see that the probability that $S_p$ exists
  and $|S_p| \ge 
  \bar{d}$ is at most $n^{- \alpha_2 \bar{d}/ 4 }$. This holds for
  every $p\geq \pjmin$ and thus, taking a union bound over all
  $O(n^{k+1})$ birth times in this range, the probability for $S_p$ of
  size at least $\bar{d}$ to exist for any $p\ge\pjmin$ is
  $O\left(n^{k+1-(\alpha_2\bar{d}/4)}\right)$, which tends to zero for
  our choice of $\bar{d}$.
\end{proof}
We can now show that whp for $p$ ``close'' to $\pj$ each $j$-cocycle in $\cG_p$ arises from copies of $\Mjmin$.

\begin{cor}\label{cor:noSpsmallp}
For every $p=(1+o(1))\pj$ with $p\geq \pjmin$, we have 
$\mathcal{N}_{\cG_p}=\emptyset$ with high probability.
\end{cor}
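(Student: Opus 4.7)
The plan is to argue by contradiction: assume $\mathcal{N}_{\cG_p}\neq\emptyset$ and consider the minimum-support element $f_p$ with support $S_p$ (Definition~\ref{def:f_p}). I will combine Lemmas~\ref{lem:traversable},~\ref{lem:smallsupport}, and~\ref{lem:largesupport} to squeeze $|S_p|$ into an empty range of sizes.

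First I will establish the lower bound $|S_p|\geq k-j+2$ deterministically. Lemma~\ref{lem:minobst}~\ref{minobst:size} gives $|S_p|\geq k-j+1$, so the only case to exclude is $|S_p|=k-j+1$. In that case, since $|S_p|=|(S_p)_K|$ whenever $(S_p)_K\neq\emptyset$, there is a unique $k$-simplex $K$ containing $S_p$, and Lemma~\ref{lem:minobst}~\ref{minobst:flower} implies that $S_p$ forms a $j$-flower $\mathcal{F}(K,C)$ for some $(j-1)$-simplex $C\subset K$. I then observe that for $f_p$ to be a $j$-cocycle, each petal of $\mathcal{F}(K,C)$ must lie in no $k$-simplex other than $K$: otherwise, taking a vertex of a second containing $k$-simplex and adjoining it to the petal would yield a $(j+1)$-simplex containing exactly one element of $S_p$, contradicting the cocycle condition. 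Hence $(K,C)$ is a copy of $\Mjmin$, and $f_p=f_{K,C}$ arises from $(K,C)$, contradicting $f_p\in\mathcal{N}_{\cG_p}$.

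Next, I fix the constant $\bar d$ supplied by Lemma~\ref{lem:largesupport}; whp either $S_p$ does not exist or $|S_p|<\bar d$. Applying Lemma~\ref{lem:smallsupport} with $d:=\bar d$ (which is legitimate since $p=(1+o(1))\pj$), whp $\cG_p$ contains no $j$-cocycle whose traversable support has size in $[k-j+2,\bar d]$. Since Lemma~\ref{lem:traversable} guarantees that $S_p$ is traversable, and the previous paragraph together with Lemma~\ref{lem:largesupport} places $|S_p|$ in the range $[k-j+2,\bar d-1]$ whp, intersecting these high-probability events yields a contradiction. Therefore $\mathcal{N}_{\cG_p}=\emptyset$ whp.

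The only delicate step is the base case $|S_p|=k-j+1$, where one must verify that the flower structure forced by Lemma~\ref{lem:minobst} genuinely witnesses an $\Mjmin$ via the cocycle condition. Beyond that, the argument is a clean concatenation of three high-probability events already packaged by the preceding lemmas, so I anticipate no further obstacle.
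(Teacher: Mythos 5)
Your proof is correct and takes essentially the same route as the paper's: Lemma~\ref{lem:minobst} together with the definition of $\mathcal{N}_{\cG_p}$ forces $|S_p|\ge k-j+2$, Lemma~\ref{lem:largesupport} caps $|S_p|$ by the constant $\bar d$, and Lemma~\ref{lem:smallsupport} applied with $d=\bar d$ (using traversability from Lemma~\ref{lem:traversable}) empties the remaining range. The only difference is that you spell out the exclusion of the case $|S_p|=k-j+1$, which the paper leaves implicit, and your argument there (a support of that size is a flower whose petals lie in no other $k$-simplex, hence arises from a copy of $\Mjmin$) is sound, provided the adjoined vertex is chosen outside $K$.
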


\begin{proof}
  By Lemma~\ref{lem:minobst} and the definition of
  $\mathcal{N}_{\cG_p}$ (Definition~\ref{def:generating}), whp either
  $S_p$ does not exist or $|S_p| \ge
  k-j+2$. Furthermore, Lemma~\ref{lem:largesupport} tells us that
  whp for all $p\ge\pjmin$, either $S_p$ does not exist or it must be
  of constant size. For $p=(1+o(1))\pj$, Lemma~\ref{lem:smallsupport}
  implies that whp $S_p$ does \emph{not} have constant size, and thus
  whp $S_p$ does not exist, meaning that whp $\mathcal{N}_{\cG_p}$ is empty.
\end{proof}

\subsection{Monotonicity with high probability} 

Although the existence of bad functions in $\cG_p$ is not
intrinsically a monotone property, in this section we show
that in fact, from time $\pMj$ on, whp this property behaves in 
a monotone way.

By Corollary \ref{cor:pMj}, whp we can apply Corollary \ref{cor:noSpsmallp}
with $p=\pMj$, therefore whp  $\mathcal{N}_{\cG_{\pMj}}$ is 
empty. In other words, whp there are no bad functions in $\cG_{\pMj}$, i.e.\
$H^j(\cG_{\pMj};\FF_2)=0$. However, 
we still need to prove that $\cG_p$ does not lose this
property for any larger $p$. More precisely, we already know by
Lemma~\ref{lem:largesupport} that whp no $\cG_p$ for $p\ge\pMj$
contains a $j$-cocycle with ``large'' support, but ``small'' supports
have been excluded by Lemma~\ref{lem:smallsupport} only in the range
$p=(1+o(1))\pj$. In the next lemma we show that if a new obstruction
appears, then the $k$-simplex whose birth causes this appearance must
be a local obstacle (Definition~\ref{def:localobstacle}). But
we already know by Lemma~\ref{lem:localobstacle} that whp no new local
obstacles appear, which will complete the argument.

\begin{lem} \label{lem:nomoreMj}
  Whp either $\mathcal{N}_{\cG_p} = \emptyset$ for all $p \ge \pMj$
  or the $k$-simplex $K$ with smallest birth time $p_K \ge \pMj$, for
  which $\mathcal{N}_{\cG_{p_K}} \not= \emptyset$, forms a local
  obstacle in $\cG_{p_K}$.
\end{lem}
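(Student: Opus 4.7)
\emph{Proof Plan.} Write $\cG' := \cG_{p_K^-}$ and $\cG := \cG_{p_K} = \cG' + K$, and assume for contradiction that $\mathcal{N}_{\cG'} = \emptyset$ while $\mathcal{N}_{\cG} \neq \emptyset$; fix a cocycle $f \in \mathcal{N}_{\cG}$ of minimum support $S$. The aim is to show that $K$ carries at least $k-j+1$ private $j$-simplices, i.e.\ $K$ is a local obstacle in $\cG$.

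The first step I would take is to establish the following pivotal observation: for any $(j-1)$-simplex $C\subset K$, the cochain $f_{K,C}^{\cG}$ supported on $\mathcal{F}(K,C)$ is a $j$-cocycle in $\cG$ \emph{if and only if} $(K,C)$ is a copy of $\Mjmin$ in $\cG$. This is a direct coboundary check: a $(j+1)$-simplex $L\subseteq K$ contains either $0$ or $2$ petals of $\mathcal{F}(K,C)$ according to whether $C\subset L$, while an $L\not\subseteq K$ contains exactly one petal precisely when some petal lies in another $k$-simplex $K^{*}\neq K$ (providing a vertex $v\in K^{*}\setminus K$ with $L = (C\cup\{w\})\cup\{v\}$). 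In particular, $K$ is a local obstacle iff some $f_{K,C}^{\cG}$ is a valid $\Mjmin$-generator of $\cG$.

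Set $S_K := \{\sigma\in S : \sigma\subseteq K\}$; by Lemma~\ref{lem:minobst}, $|S_{K^{*}}|\in\{0\}\cup[k-j+1,\binom{k+1}{j+1}]$ for every $k$-simplex $K^{*}$, with equality $k-j+1$ forcing a flower structure. The argument proceeds by case analysis. In the case $S_K = \emptyset$, the restriction of $f$ to $j$-simplices of $\cG'$ is a $j$-cocycle in $\cG'$, so by $\mathcal{N}_{\cG'}=\emptyset$ it decomposes as $\delta^{j-1}g+\sum_\ell f_{K_\ell,C_\ell}^{\cG'}$ for copies $(K_\ell,C_\ell)$ of $\Mjmin$ in $\cG'$. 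Each summand whose petals avoid $K$ extends verbatim to an $\Mjmin$-generator of $\cG$, while any ``broken'' summand has some petal $C_\ell\cup\{w\}\subseteq K\cap K_\ell$. I would then argue that these broken contributions either pair up to form a coboundary in $\cG$ or can be substituted by $\Mjmin$-generators supported on $K$, any such new generator immediately exhibiting $K$ as a local obstacle by the pivotal observation.

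In the complementary case $S_K\neq\emptyset$, Lemma~\ref{lem:minobst} supplies a flower $\mathcal{F}(K,C)\subseteq S_K$; the exchange $f\mapsto f+f_{K,C}^{\cG}$ is a legal move within $\mathcal{N}_{\cG}$ exactly when $(K,C)$ is a copy of $\Mjmin$ in $\cG$, and it strictly decreases $|S|$ unless $S = \mathcal{F}(K,C)$. The minimality of $|S|$ therefore forces $S$ to coincide with such a flower whose petals are all private, which gives $K$ as a local obstacle. Running the analogous exchange on flowers $\mathcal{F}(K^{*},C^{*})\subseteq S_{K^{*}}$ for $k$-simplices $K^{*}\neq K$ meeting $S$ eliminates everything outside $K$, closing the argument.

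The main obstacle in executing this plan is the ``broken generator'' analysis in the $S_K=\emptyset$ case: whenever a petal of a copy $(K_\ell,C_\ell)$ from the $\cG'$-decomposition is dragged into $K$, one must track the new cocycle condition on $(j+1)$-simplices inside $K$ and show that either the sum $\sum_\ell f_{K_\ell,C_\ell}^{\cG'}$ can be rewritten using only $\Mjmin$-generators of $\cG$, or else a flower in $K$ with all petals private emerges. Phrasing this step via the long exact sequence of the pair $(\cG,\cG')$ and the relative cohomology $H^{j}(\cG,\cG';\FF_2)$---whose dimension is controlled by the number of private $j$-simplices of $K$---should provide the cleanest execution.
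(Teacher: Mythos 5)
Your plan has two genuine gaps, and both stem from trying to make the argument essentially deterministic when the paper's proof of this lemma is crucially probabilistic. First, your case $S_K=\emptyset$ is left unresolved: you correctly identify that the $\Mjmin$-decomposition of $f|_{\cG'}$ need not survive the addition of $K$ (generators can be ``broken'' and $\delta^{j-1}g$ changes on new $j$-simplices), but the proposed repair via relative cohomology is not carried out, and it is unclear it could produce a local obstacle --- note that for $p\ge\pMj$ whp there are \emph{no} copies of $\Mjmin$ at all (Corollary~\ref{cor:noMjmin}), so there are no generators to substitute with. Second, your case $S_K\neq\emptyset$ is internally inconsistent: Lemma~\ref{lem:minobst} only yields a flower when $|S_K|=k-j+1$ exactly, and whenever the exchange $f\mapsto f+f_{K,C}$ \emph{is} legal (i.e.\ $(K,C)$ is a copy of $\Mjmin$ in $\cG$) it contradicts either the minimality of $S$ or $f\in\mathcal{N}_{\cG}$; so the surviving sub-case is precisely the one where some petal lies in another (hence old) $k$-simplex, where the exchange is illegal and your conclusion that ``the petals are all private'' does not follow.

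The paper's route shows what is really needed. It splits on whether $S_{p_K}$ contains a $j$-simplex already present in $\cG_p$, where $\cG_{p_K}=\cG_p+K$. If it does, one takes a maximal subset $S\subseteq S_{p_K}$ that is traversable in $\cG_p$; maximality makes the cochain supported on $S$ a $j$-cocycle of $\cG_p$, Lemma~\ref{lem:largesupport} bounds $|S|$ by a constant, Lemma~\ref{lem:jshells} then produces a $j$-shell meeting $S$ in exactly one simplex so that this cocycle is not a coboundary, and Corollary~\ref{cor:noMjmin} upgrades ``bad'' to ``lies in $\mathcal{N}_{\cG_p}$'', contradicting the minimality of $p_K$. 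Only the remaining case --- $S_{p_K}$ consisting entirely of new $j$-simplices, hence contained in $K$ and in no other $k$-simplex --- is deterministic, and there Lemma~\ref{lem:minobst} gives $|S_{p_K}|\ge k-j+1$, i.e.\ $K$ is a local obstacle. Without invoking the three whp inputs (no copies of $\Mjmin$, bounded support size, abundance of $j$-shells), your plan cannot close either of its cases.
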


\begin{proof}
  The lemma is trivially true if whp $\mathcal{N}_{\cG_p} = \emptyset$
  for all $p \ge \pMj$, we may thus assume that $K$ exists with
  positive probability. Let $p < p_K$ be such that $\cG_{p_K} =
  \cG_p+K$.

  Suppose first that $S_{p_K} \cap \cG_p \not= \emptyset$. Let $S$ be
  a maximal subset of $S_{p_K}$ which is traversable in $\cG_p$ and
  let $f$ be the $j$-cochain in $\cG_p$ with support $S$. Every
  $k$-simplex of $\cG_p$ containing some $j$-simplex in $S$ cannot
  contain $j$-simplices in $S_{p_K} \setminus S$ by the maximality of
  $S$. Therefore, every $(j+1)$-simplex of $\cG_p$ is even with
  respect to $f$, because it is even with respect to $f_{p_K}$. This
  means that $f$ is a $j$-cocycle in $\cG_p$.

  Lemma~\ref{lem:largesupport} implies that there exists a constant
  $\bar{d}$ such that whp $|S_{p_K}| < \bar{d}$ and thus also $|S| <
  \bar{d}$. But Lemma~\ref{lem:jshells}, together with the fact
  that $p>\pMj>\pjone$ whp, implies that whp each
  $j$-simplex in $S$ lies in linearly many $j$-shells in $\cG_p$, at
  most $|S|-1$ of which can contain other elements of $S$. Thus, whp
  there are $j$-shells in $\cG_p$ that contain precisely one element
  of $S$, which means that $f$ is not a $j$-coboundary, i.e.\ $f$ is a
  bad function in $\cG_p$. Now recall that whp there are no copies of
  $\Mjmin$ in $\cG_p$ by Corollary~\ref{cor:noMjmin} and thus all bad
  functions lie in $\mathcal{N}_{\cG_p}$. This means that
  $\mathcal{N}_{\cG_p} \not= \emptyset$, a contradiction to the choice
  of $K$.

  Thus, whp $S_{p_K}$ is entirely contained in $K$ and its simplices
  are not in other $k$-simplices of $\cG_{p_K}$. Moreover,
  it follows from 
  Lemma~\ref{lem:minobst} that $|S_{p_K}| \ge k-j+1$, implying
  that whp $K$ forms a local obstacle in $\cG_{p_K}$.
\end{proof}

The following corollary shows that in the supercritical regime 
$p\geq \pMj$, whp no $j$-cocycle arises from copies of $\Mjmin$.

\begin{cor}\label{cor:supercritical}
  With high probability
  $\mathcal{N}_{\cG_p}= \emptyset$ for all $p\ge\pMj$ simultaneously.
\end{cor}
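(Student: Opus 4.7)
The plan is to derive this corollary almost immediately from the three results assembled just before it, namely Lemma~\ref{lem:nomoreMj} (which provides a one-step monotonicity dichotomy), Corollary~\ref{cor:pMj} (which locates $\pMj$ strictly above $\bar{p}_j$ whp), and Lemma~\ref{lem:localobstacle} (which says no genuinely new local obstacle appears after time $\bar{p}_j$). The strategy is to rule out the ``bad'' alternative of Lemma~\ref{lem:nomoreMj} by showing that it would force a newborn local obstacle with birth time larger than $\bar{p}_j$, contradicting Lemma~\ref{lem:localobstacle}.

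More concretely, first I would apply Corollary~\ref{cor:pMj} with a slowly growing function, say $\omega:=\frac{1}{4}\log\log n$, to conclude that whp
\[
  \pMj \;>\; \frac{(j+1)\log n+\log\log n-\omega}{(k-j+1)n^{k-j}}(k-j)!
  \;>\; \bar{p}_j,
\]
where $\bar{p}_j$ is as in~\eqref{eq:pjbar}. I would then condition simultaneously on this event and on the high probability events provided by Lemma~\ref{lem:nomoreMj} and Lemma~\ref{lem:localobstacle}; a union bound keeps the overall failure probability $o(1)$.

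Under this conditioning, suppose for contradiction that $\mathcal{N}_{\cG_p}\neq\emptyset$ for some $p\geq\pMj$. By Lemma~\ref{lem:nomoreMj} there exists a $k$-simplex $K$ with birth time $p_K\geq\pMj$ such that $K$ forms a local obstacle in $\cG_{p_K}$. Since $p_K\geq\pMj>\bar{p}_j$, the $(k+1)$-set $K$ is \emph{not} present in $\cG_{\bar{p}_j}$, and therefore cannot be a local obstacle in $\cG_{\bar{p}_j}$. However, Lemma~\ref{lem:localobstacle} guarantees that every local obstacle appearing in $\cG_{p_K}$ with $p_K\geq\bar{p}_j$ is already present in $\cG_{\bar{p}_j}$, a contradiction. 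Hence whp $\mathcal{N}_{\cG_p}=\emptyset$ for \emph{all} $p\geq\pMj$ simultaneously, as required.

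There is essentially no obstacle to this argument: the real work has been carried out in Lemmas~\ref{lem:localobstacle} and~\ref{lem:nomoreMj}, whose proofs already establish the delicate monotonicity-with-high-probability of local obstacles and the fact that any loss of the property $\mathcal{N}_{\cG_p}=\emptyset$ must be witnessed by a newly-born local obstacle. The only small point worth flagging is that the conclusion of Lemma~\ref{lem:nomoreMj} is phrased for a single ``first failure'' $K$, but this is exactly what is needed: once this first failure is excluded, no later failure can occur either, so the statement transfers automatically to the uniform-in-$p$ conclusion.
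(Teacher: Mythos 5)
Your argument is correct and follows essentially the same route as the paper: rule out the second alternative of Lemma~\ref{lem:nomoreMj} by observing that the offending $k$-simplex would be a local obstacle born after $\bar{p}_j$, contradicting Lemma~\ref{lem:localobstacle} together with Corollary~\ref{cor:pMj}. The only cosmetic difference is that the paper first disposes of the boundary case $p=\pMj$ separately via Corollary~\ref{cor:noSpsmallp}, whereas you absorb it into the application of Lemma~\ref{lem:nomoreMj}, which its statement (with the non-strict inequality $p_K\ge\pMj$) permits.
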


\begin{proof}
  Recall that by Corollaries~\ref{cor:pMj}~and~\ref{cor:noSpsmallp},
  $\mathcal{N}_{\cG_{\pMj}} = \emptyset$ whp. If
  $\mathcal{N}_{\cG_p} \not= \emptyset$ for some $p>\pMj$, then whp the
  $k$-simplex whose birth creates a $j$-cocycle that is not generated
  by copies of $\Mjmin$ would form a local obstacle by
  Lemma~\ref{lem:nomoreMj}. But Lemma~\ref{lem:localobstacle} tells us
  that whp no new local obstacles appear after time $\bar{p}_j$, which
  whp is smaller than $\pMj$ by~\eqref{eq:pjbar} and
  Corollary~\ref{cor:pMj}.
\end{proof}

\section{Proofs of main results} \label{sec:proofs}

\subsection{Proof of
  Theorem~\ref{thm:gentheor}}\label{sec:proofs:gentheor}

Corollary~\ref{cor:pMj} states that for any function $\omega$ of $n$
which tends to infinity as $n \rightarrow \infty$, whp we have 
\begin{equation*}
  \frac{(j+1)\log n +\log \log n -\omega}{(k-j+1)n^{k-j}}(k-j)!
  < \pMj
  < \frac{(j+1)\log n +\log \log n +\omega}{(k-j+1)n^{k-j}}(k-j)!,
\end{equation*} 
which is precisely Theorem~\ref{thm:gentheor}~\ref{thm:gentheor:pMj}.

To prove~\ref{thm:gentheor:subcrit}, recall that
Lemma~\ref{lem:phantomlemma} states that for all $i\in[j]$, whp
$H^i(\cG_p;\FF_2) \neq 0$ for all $p\in[p_{i-1}^-,p_{M_i})$.
By~\ref{thm:gentheor:pMj}, whp for all $i\in[j-1]$
\begin{equation*}
  p_{M_i} > \left( 1 - \frac{1}{\sqrt{\log n}} \right)
  \frac{(i+1)\log n}{(k- i + 1) n^{k-i}} (k-i)! = p_{i}^-,
\end{equation*}
and thus whp $\cG_p$ is not \connected\ throughout
$\bigcup\limits_{i=1}^{j} [p_{i-1}^-,p_{M_i}) = [p_0^-,\pMj)$.

Now observe that by Lemma~\ref{lem:topconn} whp $p_T > p_0^-$ and that
$\cG_p$ is not topologically connected in $[0,p_T)$ by the definition
of $p_T$. Therefore, whp $\cG_p$ is not \connected\ in
\begin{equation*}
  [0, \pMj) = [0, p_T) \cup [p_0^-, \pMj),
\end{equation*}
as required.

It remains to prove~\ref{thm:gentheor:supercrit}. We have to show that
whp there are no bad functions in $\cG_p$ for every $p\geq \pMj$. By
Corollary~\ref{cor:noMjmin}, whp for all $p\ge\pMj$, there are no
copies of $\Mjmin$ in $\cG_p$. Thus, if $H^j(\cG_p;\FF_2)\not=0$, then
any representative of a non-zero cohomology class cannot arise from
copies of $\Mjmin$ and therefore lies in $\mathcal{N}_{\cG_p}$
(Definition~\ref{def:generating}). But by
Corollary~\ref{cor:supercritical}, whp each such $\mathcal{N}_{\cG_p}$
is empty and thus whp $H^j(\cG_p;\FF_2)=0$ for all $p\geq \pMj$.
Analogously, whp all cohomology groups $H^i(\cG_p;\FF_2)$ for
$i\in[j-1]$ vanish, because whp $p_{M_i} < \pMj$
by~\ref{thm:gentheor:pMj}. Finally, by~\ref{thm:gentheor:pMj} and
Lemma~\ref{lem:topconn} whp $p_T < \pMj$, meaning that whp $\cG_p$ is
topologically connected for all $p\ge\pMj$. This implies that whp each
such $\cG_p$ is \formalconnected.
\qed

\subsection{Proof of
  Corollary~\ref{cor:hittingtimeYp}}\label{sec:proofs:corhitting}

Let $\omega$ be any function of $n$ which tends to infinity as
$n\to\infty$. It is known (see e.g.~\cite{MeshulamWallach08}) that whp
\begin{equation} \label{eq:pisolbound}
  \frac{k \log n - \omega}{n} < \pisol < \frac{k \log n + \omega}{n}.
\end{equation}
The proof is an easy application of the first and second moment
methods.

In order to prove that $\pconn = \pisol$ whp, suppose that a
$(k-1)$-simplex $\sigma$ is isolated in $\cY_p$ for some $p$. The
indicator function $f_\sigma$ of $\sigma$ is a $(k-1)$-cocycle,
because $\sigma$ is isolated. But $f_\sigma$ is \emph{not} a
$(k-1)$-coboundary, because $\sigma$ lies in ($n-k$ many)
$(k-1)$-shells. In particular, $H^{k-1}(\cY_p;\FF_2)\neq 0$. By the
definitions of $\pconn$ and $\pisol$, this implies that $\pconn \geq
\pisol$.

For the opposite direction, fix the birth times of all $k$-simplices.
Then for all $p\geq \pisol$, we have $\cY_p = \cG_p$ and therefore
$\cY_p$ is  \mwconnected\ whp for every $p \geq
\max(\pisol,p_{M_{k-1}})$ by
Theorem~\ref{thm:gentheor}~\ref{thm:gentheor:supercrit}.
By~\eqref{eq:pisolbound} and
Theorem~\ref{thm:gentheor}~\ref{thm:gentheor:pMj}, whp for any
(slowly) growing function $\omega$
\begin{equation*}
  \pisol > \frac{k \log n - \omega}{n}
  > \frac{k \log n + \log\log n + \omega}{2n} > p_{M_{k-1}},
\end{equation*}
hence whp for all $p\geq \pisol$ we have $H^{k-1}(\cY_p;\FF_2) =
H^{k-1}(\cG_p;\FF_2) = 0$. This means that whp $\pconn \leq \pisol$
and thus $\pconn = \pisol$, as required.
\qed

\subsection{Proof of
  Theorem~\ref{thm:critwindow1}}\label{sec:proofs:critwind}

We are interested in the asymptotic distribution of $D_j :=
\dim\left(H^j(\cG_p;\FF_2)\right)$ for
\begin{equation*}
  p = \frac{(j+1)\log n+\log\log n+\const_n}{(k-j+1)n^{k-j}}(k-j)!,
\end{equation*}
where $\const_n \xrightarrow{n\rightarrow \infty} \const \in
\mathbb{R}$.

Recall that $\varMjmin$ is the random variable defined in
Lemma~\ref{lem:meanconc} which counts the number of copies of
$\Mjmin$. We  apply the method of moments
(Lemma~\ref{lem:metmoments}) to $\varMjmin$, showing that it
converges in distribution to a Poisson random variable with
expectation
\begin{equation*}
  \lambda_j = \frac{(j+1)e^{-\const}}{(k-j+1)^2 j!}.
\end{equation*}
Subsequently, we will prove that whp $\varMjmin = D_j$. In particular
this will imply that 
\begin{equation*}
  D_j \xrightarrow{d} \mbox{Po}(\lambda_j),
\end{equation*} 
as required. 

In order to determine the expectation of $\varMjmin$, let
$K\subset[n]$ be a $(k+1)$-set and let $C$ be a $j$-subset of $K$.
Recall that the probability that a (potential) petal $C\cup\{w\}$ with $w\in
K\setminus C$ lies in no other $k$-simplex is given by  
\begin{equation*}
  r = (1-p)^{\binom{n-j-1}{k-j} - 1}
\end{equation*}
(see~\eqref{eq:r}). Arguing as in Lemma~\ref{lem:meanconc}, we see that
dependencies between the petals are negligible and thus
\begin{equation}\label{eq:expecmin}
  \EE(\varMjmin) = (1+o(1)) \binom{n}{k+1} \binom{k+1}{j} p r^{k-j+1}.
\end{equation}
We observe that
\begin{align}
  r^{k-j+1}
  &= (1-p)^{\left(\binom{n-j-1}{k-j}-1\right)(k-j+1)}\nonumber\\
  &= \exp\left(-\frac{n^{k-j}}{(k-j)!}(k-j+1)p+O\left(n^{k-j-1}p\right)+O\left(n^{k-j}p^2\right)\right)\nonumber\\
  &= \exp\left(-(j+1)\log n - \log\log n-\const_n+o(1)\right)\nonumber\\
  &= (1+o(1))\frac{e^{-\const_n}}{n^{j+1}\log n}.\label{eq:ro1}
\end{align}
Therefore, we have
\begin{align}
  \EE(\varMjmin) &=(1+o(1)) \frac{n^{k+1}}{(k-j+1)!j!} \cdot
  \frac{(j+1)\log n+\log\log n+\const_n}{(k-j+1)n^{k+1}\log n}(k-j)!e^{-\const_n}\nonumber\\
  &= (1+o(1))\frac{(j+1)e^{-\const_n}}{(k-j+1)^2j!}
  \stackrel{\const_n\to\const}{=} (1+o(1))\lambda_j.\label{eq:lambda}
\end{align}

Denote by $\mathcal{T}^-$ the set of all pairs $(K,C)$ that can form a
copy of $\Mjmin$ in $\cG_p$. For each $T^-\in\mathcal{T}^-$, denote by
$X_{T^-}$ the indicator random variable of the event that $T^-$ forms
a copy of $\Mjmin$ in $\cG_p$. For each fixed integer $t\geq 1$, we
now determine the binomial moments
\begin{equation*}
  \EE\binom{\varMjmin}{t} =
  \sum_{\mathcal{S}\in\binom{\mathcal{T}^-}{t}}
  \Pr\left(\bigcap_{T^-\in\mathcal{S}} \{ X_{T^-}=1 \} \right).
\end{equation*}

Suppose first that all $T^-\in\mathcal{S}$ have different
$(k+1)$-sets. In this case, if all $T^-\in\mathcal{S}$ form copies of
$\Mjmin$, none of the petals are shared (by
property~\ref{Mjmin:flower} of $\Mjmin$, see
Definition~\ref{def:mjminus}). If we choose $t$ distinct $(k+1)$-sets
uniformly at random, whp they will be disjoint and in particular no
two $T_1^-,T_2^-\in\mathcal{S}$ will share a petal. To choose $t$
distinct $(k+1)$-sets, there are
\begin{equation*}
  \binom{\binom{n}{k+1}}{t} = (1+o(1))\frac{\binom{n}{k+1}^t}{t!}
\end{equation*}
choices. Therefore, the contribution to $\EE \binom{\varMjmin}{t}$ made
by the sets $\mathcal{S}$ for which all
$T^-\in\mathcal{S}$ have distinct $(k+1)$-set is
\begin{align}
  (1+o(1))\binom{\binom{n}{k+1}}{t} \binom{k+1}{j}^t p^t r^{t(k-j+1)}
  &\stackrel{\eqref{eq:expecmin}}{=} (1+o(1))\frac{\EE(\varMjmin)^t}{t!}\nonumber\\ 
  &\stackrel{\eqref{eq:lambda}}{=} (1+o(1))\frac{{\lambda_j}^t}{t!},\label{eq:critwindcontr}
\end{align} 
which is the desired asymptotic value.

We now show that the contribution coming from sets $\mathcal{S}$ whose
elements use $u<t$ different $(k+1)$-sets is negligible. We have
$\binom{\binom{n}{k+1}}{u}$ ways to select the $(k+1)$-sets and at
most $u^{t-u}\binom{k+1}{j}^t$ different ways to locate the $t$
potential $\Mjmin$ in them. Moreover, observe that two different
copies of $\Mjmin$ in the same $k$-simplex share at most one petal
(otherwise they would have the same centre and thus be identical) and
in that case these two copies have $(k-j+1) + (k-j)$ petals in total.
This means that each of the $u$ many $(k+1)$-sets contains at least
$k-j+1$ petals, and at least one $(k+1)$-set contains at least
$(k-j+1) + (k-j)$ petals. Therefore the total number of petals
required for such a set $\mathcal{S}$ is bounded from below by
$u(k-j+1) + (k-j)$. In total, the contribution of
such sets $\mathcal{S}$ to the binomial moment is at most
\begin{equation*}
  \binom{\binom{n}{k+1}}{u} u^{t-u} \binom{k+1}{j}^t p^u r^{u(k-j+1)}
  r^{k-j}.
\end{equation*}
Replacing $t$ by $u$ in~\eqref{eq:critwindcontr}, we deduce that
\begin{equation*}
  \binom{\binom{n}{k+1}}{u} u^{t-u} \binom{k+1}{j}^t p^u r^{u(k-j+1)}
  = (1+o(1))\frac{{\lambda_j}^u}{u!}\left(u\binom{k+1}{j}\right)^{t-u}
  = \Theta(1).
\end{equation*}
Furthermore,~\eqref{eq:ro1} yields $r^{k-j} =
o(1)$. Together with~\eqref{eq:critwindcontr}, we deduce that
\begin{equation*}
  \EE\binom{\varMjmin}{t} = (1+o(1))\frac{\lambda_j^t}{t!}
\end{equation*}
for each fixed integer $t\ge1$. Now Lemma~\ref{lem:metmoments}
yields $\varMjmin \xrightarrow{d} \mbox{Po}(\lambda_j)$.

It remains to show that $\varMjmin=D_j$ whp. To this end, denote by
$f_1,\ldots,f_{\varMjmin}$ the $j$-cocycles arising from the copies of
$\Mjmin$ in $\cG_p$. Corollary~\ref{cor:noSpsmallp} in particular
implies that whp the cohomology classes of $f_1,\ldots, f_{\varMjmin}$ 
generate $H^j(\cG_p;\FF_2)$, which means that $\varMjmin\geq D_j$.  

In order to prove the opposite direction, we show that the cohomology
classes of $f_1,\ldots,f_{\varMjmin}$ are linearly independent.
Observe first that whp $\varMjmin = o(n)$ by Markov's inequality,
because $\varMjmin$ has bounded expectation. Let $I\subseteq
[\varMjmin]$ be non-empty and let $S$ be the support of
$\sum_{i\in I}f_i$. By the arguments above for $t=2$ and $u=1$, whp
there are no two $\Mjmin$ that share the same $k$-simplex. Thus, whp
the $f_i$'s have disjoint support by property~\ref{Mjmin:flower} of an
$\Mjmin$ (Definition~\ref{def:mjminus}), and in particular $S \neq 
\emptyset$. Pick $L \in S$. Lemma~\ref{lem:jshells} and the
fact that $p>\pjone$ tell us that whp
there are $\Theta(n)$ many $j$-shells in $\cG_p$ that contain $L$. All
these $j$-shells meet only in $L$, thus at most $|S|\leq (k-j+1)|I| =
o(n)$ of them can contain another $j$-simplex in $S$. Thus, there are
$j$-shells that meet $S$ only in $L$, showing that $\sum_{i\in I} f_i$
is not a $j$-coboundary. Therefore the cohomology classes of
$f_1,\dotsc,f_{\varMjmin}$ are linearly independent whp. This shows
that whp $\varMjmin \le D_j$ and thus $\varMjmin=D_j$, as desired. 

Together with $\varMjmin \xrightarrow{d} \mbox{Po}(\lambda_j)$, this
proves that $D_j \xrightarrow{d} \mbox{Po}(\lambda_j)$. By
Theorem~\ref{thm:gentheor} (for $j-1$ instead of $j$) whp
$H^0(\cG_p;\FF_2)=\FF_2$ and $H^i(\cG_p;\FF_2)=0$ for all $i\in[j-1]$.
In particular,
\begin{align*}
  \Pr(\cG_p \text{ is \connected})
  &= \Pr\big(H^j(\cG_p;\FF_2)=0\big)+o(1)\\
  &= (1+o(1))\Pr\big(\mbox{Po}(\lambda_j)=0\big)\\
  &= (1+o(1))e^{-\lambda_j}.
\end{align*}
This concludes the proof of Theorem~\ref{thm:critwindow1}.
\qed

\section{Concluding remarks} \label{sec:concremarks}

\subsection{Comparison of proof methods} \label{sec:concremarks:dim}

Let us note that for the subcritical regime
(Theorem~\ref{thm:gentheor}~\ref{thm:gentheor:subcrit}), one might try
to use a different approach in order to prove that $H^j(\cG_p;\FF_2)$
does not vanish in the interval $[\pjmom,\pMj)$. If the dimension of 
$C^j(\cG_p)$ (viewed as an $\FF_2$-vector space) is larger than the
sum of the dimensions of $C^{j-1}(\cG_p)$ and $C^{j+1}(\cG_p)$, then
$H^j(\cG_p;\FF_2)\neq 0$ would follow. However, this behaviour only
happens for ``small'' $p\in [\pjmom,\pMj)$ and, more importantly, only
for $j \geq \frac{k-1}{2}$. In contrast, our proof method works for
all values of $j$. Moreover, our result that $[\pjmom,\pMj)$ whp is
covered by three copies of $\Mj$ (Lemma~\ref{lem:phantomlemma}),
together with the fact that whp $\cG_{p_0^-}$ has isolated
vertices (this can be proved using an easy second moment argument),
implies the following slightly stronger statement.
\begin{prop}
  With high probability for every $p<\pMj$, the complex $\cG_p$
  contains an isolated vertex or a copy of $M_i$ for some $i \in [j]$.
\end{prop}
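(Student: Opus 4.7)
The plan is to decompose the interval $[0,\pMj)$ into two parts, $[0,p_0^-)$ and $[p_0^-,\pMj)$, and exhibit the required obstruction in each part. In the first part I will produce an isolated vertex whp, and in the second part I will produce a copy of some $M_i$ with $i\in[j]$, by applying Lemma~\ref{lem:phantomlemma} with $j$ replaced by each $i\in[j]$.

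For $p\in[0,p_0^-)$: a standard second moment argument shows that whp $\cG_{p_0^-}$ contains an isolated vertex (i.e.\ a vertex lying in no $k$-simplex). Indeed, for $p=p_0^-=\log n/n^k$ the probability that a given vertex is isolated is $(1-p)^{\binom{n-1}{k}}=(1+o(1))\exp(-\log n/k!)=n^{-1/k!}$, so the expected number of isolated vertices is $n^{1-1/k!}\to\infty$; an elementary variance calculation gives concentration. I condition on this high-probability event. The key monotonicity observation is that the set of $k$-simplices of $\cG_p$ grows with $p$, so any vertex that is isolated in $\cG_{p_0^-}$ is automatically isolated in $\cG_p$ for every $p\le p_0^-$. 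Hence whp $\cG_p$ has an isolated vertex throughout $[0,p_0^-]$.

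For $p\in[p_0^-,\pMj)$: for each $i\in[j]$, apply Lemma~\ref{lem:phantomlemma} with $j$ replaced by $i$. This yields whp three triples $(K_\ell^{(i)},C_\ell^{(i)},J_\ell^{(i)})$ such that at every $p\in[p_{i-1}^-,p_{M_i})$ at least one of them forms a copy of $M_i$ in $\cG_p$. Taking a union (over the $j=O(1)$ values of $i$) of the corresponding high-probability events, whp for every $i\in[j]$ and every $p\in[p_{i-1}^-,p_{M_i})$ the complex $\cG_p$ contains a copy of $M_i$. It remains to check that these subintervals fit together to cover $[p_0^-,\pMj)$, that is, $p_{M_i}\ge p_i^-$ for every $i\in[j-1]$. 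This follows from Theorem~\ref{thm:gentheor}~\ref{thm:gentheor:pMj}, since whp
\begin{equation*}
p_{M_i}>\frac{(i+1)\log n+\log\log n-\omega}{(k-i+1)n^{k-i}}(k-i)!>\left(1-\frac{1}{\sqrt{\log n}}\right)\frac{(i+1)\log n}{(k-i+1)n^{k-i}}(k-i)!=p_i^-
\end{equation*}
for any slowly growing $\omega$.

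Combining the two parts, whp the complex $\cG_p$ contains an isolated vertex for every $p<p_0^-$ and a copy of some $M_i$ with $i\in[j]$ for every $p\in[p_0^-,\pMj)$, as required. Since everything reduces to finitely many high-probability events (the isolated-vertex event and the $j$ applications of Lemma~\ref{lem:phantomlemma} together with Theorem~\ref{thm:gentheor}~\ref{thm:gentheor:pMj}), a single union bound finishes the proof. The only nontrivial input is Lemma~\ref{lem:phantomlemma}, which has already been established; the remaining arguments are essentially bookkeeping, and I do not anticipate any genuine obstacle beyond verifying the interval endpoints line up as above.
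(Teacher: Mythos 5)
Your proof is correct and follows essentially the same route as the paper, which establishes this proposition by combining Lemma~\ref{lem:phantomlemma} (applied with $j$ replaced by each $i\in[j]$, with the intervals glued via Theorem~\ref{thm:gentheor}~\ref{thm:gentheor:pMj}) with the second-moment fact that whp $\cG_{p_0^-}$ has isolated vertices, together with the monotonicity of isolation. No issues.
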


In the supercritical regime
(Theorem~\ref{thm:gentheor}~\ref{thm:gentheor:supercrit}), the
counting methods used in~\cite{LinialMeshulam06,MeshulamWallach08} for
$\cY_p$ are \emph{not} sufficient to prove the non-existence of
$j$-cocycles in $\cG_p$. This is due to the fact that these methods
have been designed for the special case $j=k-1$ and for a threshold
which is about twice as large as $p_{k-1}$. For this reason, the more
careful arguments used in Lemmas~\ref{lem:traversable}
to~\ref{lem:nomoreMj} become necessary.

\subsection{Alternative models}

There are several ways to define random $k$-complexes. If the
$k$-simplices are chosen independently with probability $p$, then the
models $\cY_p$ and $\cG_p$ are somewhat extremal constructions, in the
sense that $\cY_p$ contains \emph{all} simplices of lower dimension,
while $\cG_p$ only comprises those simplices that are necessary in
order to be a complex. What happens in between, i.e.\ when the complex
contains all simplices in $\cG_p$, but in addition, some simplices of
dimensions $1,\ldots,k-1$ might be added in a random fashion?
Depending on the choice of probabilities, such a complex might show
behaviour that is different from both $\cY_p$ and $\cG_p$.

Random complexes also arise naturally from random graphs. For
instance, the random \emph{clique complex} $\mathcal{X}_p(n)$ (also
known as \emph{flag complex}) on vertex set $[n]$ can be defined as
the maximal complex whose 1-skeleton is the binomial random graph.
Equivalently, a non-empty set $U \subseteq [n]$ forms a simplex in
$\mathcal{X}_p(n)$ if and only if $U$ is a clique in the binomial
random graph. Topological properties of $\mathcal{X}_p(n)$ have been
studied in~\cite{CostaFarberHorak15,Kahle09,Kahle14}. Another example
is the random \emph{neighbourhood complex} arising from the binomial
random graph by letting each non-empty set of vertices that have a
common neighbour form a simplex~\cite{Kahle07}.
See~\cite{KahleSurvey14} for an overview of these and other models of
random complexes.

\subsection{Other notions of connectedness}

The vanishing of cohomology groups with coefficients in $\FF_2$ is
just one possible way of defining the concept of ``connectedness'' of
$\cG_p$. An obvious alternative would be to consider coefficients from
other groups or fields. For $\cY_p$, such notions of connectedness
have been studied for coefficients in any finite abelian group, in
$\mathbb{Z}$, or in any field
\cite{AronshtamLinial15,AronshtamLinialLuczakMeshulam12,HoffmanKahlePaquette17,LinialPeled16,LuczakPeled16,MeshulamWallach08}.
In particular, the threshold for the vanishing of $H^{k-1}(\cY_p;R)$
for a finite abelian group $R$ is independent of the choice of
$R$~\cite{MeshulamWallach08}. 

For $\cG_p$, it is not obvious whether the threshold for
\connectedness\ depends on the choice of the group of coefficients. An
indication that it might indeed depend on the group, even if we
restrict attention only to finite abelian groups, is the observation
that $\Mj$ only remains an obstruction when the coefficients are taken
from a group of even order. For groups of odd order, the minimal
obstruction becomes larger, and thus one would expect the threshold
for \connectedness\ to decrease. 

A rather strong notion of connectedness would be to require the
homotopy groups $\pi_1(\cG_p), \ldots, \pi_j(\cG_p)$ to vanish. For
the $2$-dimensional case, the vanishing of $\pi_1(\cY_p)$ was studied
by Babson, Hoffman and Kahle~\cite{BabsonHoffmanKahle11}. In
particular, they showed that whp $\pi_1(\cY_p)\neq 0$ at the time that
$H^1(\cY_p;\FF_2)$ becomes zero. From that time on, the models $\cY_p$
and $\cG_p$ coincide. As $\pi_1(\cG_p) \neq 0$ follows immediately
from $H^1(\cG_p;\FF_2) \neq 0$, the range that should be of particular
interest with respect to $\pi_1(\cG_p)$ in the $2$-dimensional case is
\begin{equation*}
  \frac{\log n+\frac{1}{2}\log \log n}{n} \leq p \leq
  \frac{2 \log n + \omega}{n}.
\end{equation*}
A natural conjecture would be that whp $\pi_1(\cG_p) \neq 0$ in this
range.

Theorem~\ref{thm:critwindow1} provides a limit result for the
dimension $D_j = \dim(H^j(\cG_p;\FF_2))$ of the $j$-th cohomology
group of $\cG_p$ around the point of the phase transition. It would be
interesting to know the behaviour of $D_j$ also for earlier regimes.
More precisely, how large is $D_j$ in the interval $[p_{j-1}^-,\pMj)$?
How far below $p_{j-1}^-$ do we have $D_j>0$ whp?

\section*{Acknowledgement}

The authors thank Penny Haxell for very helpful discussions regarding
the two-dimensional case. An extended abstract of the case $k=2$ has
appeared in the Proceedings of
Eurocomb 2017~\cite{CooleyHaxellKangSpruessel17}.

\bibliographystyle{amsplain}
\bibliography{References}

\end{document}